\newtheorem{Theorem}{Theorem}[section]
\newtheorem{Definition}[Theorem]{Definition}
\newtheorem{Proposition}[Theorem]{Proposition}
\newtheorem{Lemma}[Theorem]{Lemma}
\newtheorem{Corollary}[Theorem]{Corollary}
\theoremstyle{remark}
\newtheorem{Remark}[Theorem]{Remark}
\newtheorem{Remarks}[Theorem]{Remarks}
\newcommand{\RR}{\mathbb{R}}
\newcommand{\px}{\partial_x}
\newcommand{\pt}{\partial_t}
\providecommand{\norm}[1]{\left\lVert#1\right\rVert}
\newcommand{\qtq}[1]{\quad \text{#1}\quad }
\newcommand{\R}{{\mathbb R}}
\newcommand{\T}{{\mathbb T}}
\newcommand{\N}{{\mathbb N}}
\newcommand{\B}{{\mathcal{B}}}
\newcommand{\A}{\mathcal{A}}
\providecommand{\norm}[1]{\left\lVert#1\right\rVert}
\numberwithin{equation}{section}
\begin{document}

\title[Time--delayed KdV--type systems: A detailed study]{A qualitative study of the generalized dispersive systems with time--delay: The unbounded case}

\author[Capistrano--Filho]{Roberto de A. Capistrano--Filho*}
\thanks{*Corresponding author.}
\thanks{\today}
\address{Departamento de Matem\'atica, Universidade Federal de Pernambuco, \\S/N Cidade Universit\'aria, 50740-545, Recife (PE), Brazil\\
Email address: \normalfont\texttt{roberto.capistranofilho@ufpe.br}}

\author[Gallego]{Fernando A. Gallego}
\address{Departamento de Matem\'atica y Estadística, Universidad Nacional de Colombia (UNAL), \\Cra 27 No. 64-60, 170003, Manizales, Colombia\\
Email address: \normalfont\texttt{fagallegor@unal.edu.co}}

\author[Komornik]{Vilmos Komornik}
\address{Département de Mathématique,
Université de Strasbourg,
\\7 rue René Descartes,
67084 Strasbourg Cedex, France\\
Email address: \normalfont\texttt{komornik@math.unistra.fr}}

\subjclass[2010]{35Q53, 93D15, 93D30, 93C20}
\keywords{KdV-type equation, Damping mechanism, Delayed feedback, Stabilization, Lyapunov approach}

\numberwithin{equation}{section}

\begin{abstract}
We study the asymptotic behavior of the solutions of the time-delayed higher-order dispersive nonlinear differential equation 
\begin{equation*}
u_t(x,t)+Au(x,t) +\lambda_0(x) u(x,t)+\lambda(x) u(x,t-\tau )=0
\end{equation*}
where
\begin{equation*}
Au=(-1)^{j+1}\partial_x^{2j+1}u+(-1)^m\partial_x^{2m}u+ \frac{1}{p+1}\partial_xu^{p+1}
\end{equation*}
with $m\le j$ and $1\le p<2j$.
Under suitable assumptions on the time delay coefficients, we prove that the system is exponentially stable if the coefficient of the delay term is bounded from below by a suitable positive constant, without any assumption on the sign of the coefficient of the undelayed feedback.  
Additionally, in the absence of delay, general results of stabilization are established in $H^s(\mathbb{R})$ for $s\in[0,2j+1]$. 
Our results generalize several previous theorems for the Korteweg-de Vries type delayed systems in the literature. 
\end{abstract}
\maketitle


\section{Introduction}
\subsection{Model description}
Under suitable assumptions on amplitude, wavelength, wave steepness, and so on, the study on asymptotic models for water waves has been extensively investigated to understand the full water wave system; see, for instance, \cite{ASL,BCL, BLS} and references therein for a rigorous justification of various asymptotic models for surface and internal waves.

Formulating the waves as a free boundary problem of the incompressible,  irrotational Euler equation in an appropriate non-dimensional form, one has two non-dimensional parameters $\delta := \frac{h}{\lambda}$ and $\varepsilon := \frac{a}{h}$, where the water depth, the wavelength and the amplitude of the free surface are parameterized as $h, \lambda$ and $a$, respectively. Moreover, another non-dimensional parameter $\mu$ is called the Bond number, which measures the importance of gravitational forces compared to surface tension forces. The physical condition $\delta \ll 1$ characterizes the waves, called long waves or shallow water waves.  In particular, considering the relations between $\varepsilon$ and $\delta$, we can have two well-known regimes:
\begin{enumerate}
\item[1.] Korteweg-de Vries (KdV): $\varepsilon = \delta^2 \ll 1$ and $\mu \neq \frac13$.
Under this regime, Korteweg and de Vries \cite{Korteweg}\footnote{This equation was first introduced by Boussinesq \cite{Boussinesq}, and Korteweg and de Vries rediscovered it twenty years later.} derived the following  well-known equation as a central equation among other dispersive or shallow water wave models called the KdV equation from the equations for capillary-gravity waves: 
\[\pm2 \eta_t + 3\eta\eta_x +\left( \frac13 - \mu\right)\eta_{xxx} = 0.\]
\item[2.] Kawahara: $\varepsilon = \delta^4 \ll 1$ and $\mu = \frac13 + \nu\varepsilon^{\frac12}$.
In connection with the  critical Bond number $\mu = \frac13$, Hasimoto \cite{Hasimoto1970} derived a fifth-order KdV equation of the form
\[\pm2 \eta_t + 3\eta\eta_x - \nu \eta_{xxx} +\frac{1}{45}\eta_{xxxxx} = 0,\]
which is nowadays called the Kawahara equation.
\end{enumerate}

In recent years, many authors have been interested in finding the behavior of solutions for the time-delayed KdV equation, time-delayed Kawahara equation, and other time-delayed dispersive systems. See, for instance, \cite{Chentouf,Chentouf1,CaVi,CCKR,KomPig2020, Valein} and the reference therein.  In this article, our goal is to study general results for a higher-order dispersive equation, which in some sense recovers the equations mentioned in the cited articles.  Due to this advance for this type of dispersive equation, our main focus is to investigate the stabilization of the higher-order extension, for example, of KdV and Kawahara equations.

To be precise, we will consider the Cauchy problem for the following higher-order KdV-type equation posed in $\mathbb{R}$:
\begin{equation}\label{eq:kdv_int}
\begin{cases}
u_t(x,t)+(-1)^{j+1}\partial_x^{2j+1}u(x,t) +\frac12 \partial_x(u^2)=0, & (t,x) \in \mathbb{R} \times \mathbb{R},\\
u(0,x) = u_0(x), & x\in \mathbb{R}.
\end{cases}
\end{equation}
Specifically, \eqref{eq:kdv_int} is called KdV and fifth-order KdV--type equation when $j=1$ and $j=2$, respectively.   More generally, we aim to prove the stabilization results of the solutions for a time-delayed higher-order dispersive system with a strong dissipative term, namely
\begin{equation}\label{11}
\begin{cases}
u_t(x,t)+(-1)^{j+1}\partial_x^{2j+1}u(x,t)+(-1)^m\partial_x^{2m}u(x,t) +\lambda_0(x) u(x,t)\\\hspace{4cm}+\lambda(x) u(x,t-\tau )+{\frac{1}{p+1}\partial_xu^{p+1}(x,t)}=0,
&\qtq{in}\RR\times (0,\infty),\\
u(x, s)=u_0(x, s)\,&\qtq{in}\RR\times [-\tau, 0],
\end{cases}
\end{equation}
where we often write $u_0(x)$ instead of $u_0(x,0)$ with $u_0$ the initial value\footnote{Here, we consider the following compatibility condition $u_0(x,0)=u_0(x)$.} and $j,m\in\mathbb{N}$,  $m\leq j$, $1\leq p < 2j$.  Here,  the constant $\tau >0$ is the time delay and the coefficients are considered with the following regularity $$\lambda_0(x), \lambda(x) \in L^\infty(\RR).$$  Thus, our main intention is to furnish sufficient conditions on the coefficients $\lambda, \lambda_0$ to have well-posedness and exponential decay estimates for the model \eqref{11}.

 It is crucial to emphasize that the initial condition $u(x, s) = u_0(x, s)$  must be taken into account, as it arises from the initial-boundary value problem for a transport equation. This is a classical setup, as detailed in \cite{ NPSicon06}. Specifically, the well-posedness of the delay differential equation \eqref{11} is studied using the following change of variable: $z(x, \rho, t) = u(x, t - \rho \tau),$ where  $x \in \mathbb{R}$,  $\rho \in (0, 1)$, with  $t > 0$. Substituting this, the function  $z(x, \rho, t)$  satisfies the transport equation:
\begin{equation}\label{eqtransport}
\begin{cases}
\tau \partial_t z(x, \rho, t) + \partial_\rho z(x, \rho, t) = 0, & x \in \mathbb{R}, \rho \in (0, 1), t > 0, \\
z(x, 0, t) = u(x, t), & x \in \mathbb{R}, t > 0, \\
z(x, \rho, 0) = z_0(x, \rho, -\rho \tau), & x \in \mathbb{R}, \rho \in (0, 1),
\end{cases}
\end{equation}
where  $z_0(x, \rho, -\rho \tau) := u_0(x, -\rho \tau)$. With this formulation, the linear system associated with \eqref{11} can be rewritten as an abstract problem. Let  $U(t) = (u(\cdot, t), z(\cdot, \cdot, t))$ and denote  $z(1) := z(x, 1, t)$. Incorporating the linear system derived from \eqref{11} and \eqref{eqtransport}, we obtain the following system:
\begin{equation*}
\begin{cases}
u_t + (-1)^{j+1} \partial_x^{2j+1} u + (-1)^m \partial_x^{2m} u + \lambda_0(x) u + \lambda(x) z(1) = 0, & (x, t) \in \mathbb{R} \times (0, \infty), \\
\tau \partial_t z(\rho) + \partial_\rho z(\rho) = 0, & (x, t) \in \mathbb{R} \times (0, \infty),  \rho \in (0, 1), \\
z(x, 0, t) = u(x, t), & (x, t) \in \mathbb{R} \times (0, \infty), \\
u(x, 0) = u_0(x), & x \in \mathbb{R}, \\
z(x, \rho, 0) = z_0(x, \rho, -\rho \tau), & x \in \mathbb{R}, \rho \in (0, 1),
\end{cases}
\end{equation*}
which is equivalent to the abstract Cauchy problem:
\[
\begin{cases}
\frac{d}{d t} U(t) = A U(t), \\
U(0) = \left(u_0(x), z_0(x, -\rho \tau)\right),
\end{cases}
\]
where  $A: D(A) \subset \mathcal{H} \to \mathcal{H}$  is defined as
$$A(u, z) = \left((-1)^j \partial_x^{2j+1} u - (-1)^m \partial_x^{2m} u - \lambda_0(x) u - \lambda(x) z(1), -\tau^{-1} \partial_\rho z\right),$$
and the dense domain  $D(A) = \left\{ U \in \mathcal{H} : A U \in \mathcal{H} \right\}$, where  $\mathcal{H}$ is an appropriately chosen Hilbert space. Henceforth, we will focus on analyzing the stabilization properties of the system \eqref{11}.

\subsection{Review of the results in the literature}
Let us briefly discuss the preceding works \cite{Chentouf,Chentouf1,CaVi,CCKR,KomPig2020, Valein} and the results concerning the well-posedness of \eqref{eq:kdv_int}.

The local and global well-posedness of \eqref{eq:kdv_int} has been widely studied. 
The local well-posedness result was first proved by Gorsky and Himonas \cite{Gorsky:2009eg} for $s\ge-\frac12$, and Hirayama \cite{Hirayama} improved this to $s \ge -\frac{j}{2}$. 
Both works were based on the standard Fourier restriction norm method. 
Hirayama improved the bilinear estimate by using the factorization of the resonant function. The global well-posedness of \eqref{eq:kdv_int} was established for $j=1,2$ by Colliander et al. \cite{CKSTT1} and Kato \cite{Kato}, respectively, via the ``I-method". 
In \cite{HongKwak}, the authors extended the results of \cite{CKSTT1} and \cite{Kato} to $j\ge3$. 
Their method follows the argument in \cite{CKSTT1} for the periodic KdV equation, while some estimates are slightly different. 
They proved that the IVP \eqref{eq:kdv_int} is globally well-posed in $H^s(\T)$ for $j \ge 3$ and $s \ge -\frac{j}{2}$.

To our knowledge, the only work concerning the control and stabilization properties for the system \eqref{eq:kdv_int} was done by the first author with two collaborators in \cite{CaKa}. 
The authors studied the local and global control results for \eqref{eq:kdv_int} posed on the unit circle.  
More precisely,  they considered the system 
\begin{equation*}
\begin{cases}
\pt u + (-1)^{j+1} \px^{2j+1}u + \frac12 \px(u^2)=f(t,x), & (t,x) \in \R \times \T,\\
u(0,x) = u_0(x), &x\in H^s(\T),
\end{cases}
\end{equation*}
posed on a periodic domain $\mathbb{T}$.
They showed the globally controllability in $H^s(\T)$, for $s\geq0$ by the forcing term
\begin{equation*}
f(t,x):=g(x)  \left(  h(t,x)  -\int_{\mathbb{T}}g(y) h(t,y) \; dy\right), 
\end{equation*}
supported in a given open set $\omega\subset\mathbb{T}$, where $g$ is a given nonnegative smooth function satisfying
\[
\int_{\mathbb{T}}g\left(  x\right)  dx=1\quad\text{and}\quad \omega :=\left\{  g>0\right\},
\]
and $h$ is the control input.

Considering the particular cases of the system \eqref{11}, without delay and damping terms, we recall some results from the literature concerning the asymptotic properties. 
The well-known Korteweg--de Vries--Burgers equation
\begin{equation}\label{13}
u_t+u_{xxx}-u_{xx} +uu_x=0  \qtq{in} \RR\times (0,\infty),
\end{equation}
corresponding to the case $j=m=1$, was extensively studied.  For example, in \cite{Amick} Amick \textit{et al.} proved that the $L^2$-norm of the solutions to \eqref{13} tends to zero as $t\rightarrow\infty$ in a polynomial way, namely 
\begin{equation*}
\norm{u(\cdot, t)}_2\le C t^{-\frac 1 2}\qtq{for all} t>0,
\end{equation*}
with a positive constant $C.$ More recently,  Cavalcanti \textit{et al.}  \cite{CCKR} studied the following damped KdV--Burgers equation:
\begin{equation}\label{14}
\begin{cases}
u_t(x,t)+u_{xxx}(x,t)-u_{xx}(x,t) +\lambda_0(x) u(x,t) +u(x,t) u_x(x,t)=0,
&\qtq{in}\RR\times (0,\infty),\\
u(x, 0)=u_0(x),&\qtq{in}\RR.
\end{cases}
\end{equation}
Under appropriate conditions on the damping coefficient $\lambda_0$, the authors established its well-posedness and exponential stability for indefinite damping $\lambda_0(x),$ giving exponential decay estimates on the $L^2-$norm of solutions.
In \cite{gallego}, the results of \cite{CCKR} were extended by generalizing the nonlinear term of \eqref{14} and proving the global well-posedness and exponential stabilization of the generalized KdV-Burger equation under the presence of the localized and indefinite damping. 
In \cite{KomPig2020}, the authors consider the KdV-Burgers equation \eqref{14} in the presence of a delayed feedback $\lambda(x)u(x,t-\tau )$. 
They considered the system with damping and delay feedback, showing the exponential decay estimates under appropriate conditions on the damping coefficients. 

It is important to point out the exponential decay estimates obtained in \cite{MVZ,P} for the KdV equation posed in an interval with localized damping. 
Also, periodic conditions have been considered in \cite{Komornik, KRZ} while more general nonlinearities have been considered in \cite{RZ}.  
Additionally, the robustness concerning the delay of the boundary stability of the nonlinear KdV equation has been studied in \cite{BCV}. 
The authors obtain, under an appropriate condition on the feedback, with and without delay, the local stabilization results for the KdV equation with noncritical length.  
Moreover,  in \cite{Valein}, the authors extended this result for the nonlinear Korteweg-de Vries equation in the presence of an internal delayed term. 

In two recent articles \cite{Chentouf, Chentouf1}, the authors studied the qualitative and numerical analysis of the following nonlinear fourth-order delayed dispersive equation in a bounded domain $I=[0,L]$ with boundary and initial conditions:
\begin{equation}\label{ch}
\begin{cases}u_{t}(x, t)-\sigma u_{x x}(x, t)+\mu u_{x x x x}(x, t)+u(x, t-\tau) u_{x}(x, t)=0, & (x, t) \in I \times(0, \infty), \\ 
u(0, t)=u(L, t)=u_{x}(0, t)=u_{x}(L, t)=0, & t>0, \\ 
u(x, h)=f(x, h), & (x, h) \in I \times[-\tau, 0].
\end{cases}
\end{equation}
The well-posedness, as well as the exponential stability of the zero solution of \eqref{ch}, was established in \cite{Chentouf}. 
The main ingredient of the proof was the exploitation of the Schauder Fixed Point Theorem. 
This improved an earlier result \cite{Chentouf1} in the sense that no interior damping control was required.  
Additionally, numerical simulations were also presented in this article to illustrate the theoretical result. 

Finally, in a recent paper \cite{CaVi} the authors considered the Kawahara equation posed on a bounded interval under the presence of localized damping ($a\left(x\right)u(x,t)$) and delay ($b(x)u(x,t-h)$) terms. They proved its exponential stabilization under suitable assumptions.  First, they showed that the  Kawahara system is exponentially stable under some restriction on the spatial length of the domain. Next, they introduced a more general delayed system and suitable energy functions, they proved, via the Lyapunov approach, the exponential stability for small initial data, under a restriction on the spatial length of the domain.  Then they used a compactness-uniqueness argument to remove these hypotheses, thereby obtaining a semi-global stabilization result.

We end our review with a recent paper \cite{vilmos2022} where abstract linear and nonlinear evolutionary systems with feedback were studied. 
Specifically, the authors considered the system
\begin{equation}\label{general}
\left\{\begin{array}{ll}U^{\prime}(t)=A U(t)+k(t) B U(t-\tau)+F(U(t)),  & \text { in }(0,+\infty). \\ 
U(0)=U_{0}, & \text { for } t \in(0, \tau),\\ 
B U(t-\tau)=f(t), & \end{array}\right.
\end{equation}
where $A$ generates an exponentially stable semigroup in a Hilbert space $H$, $B$ is a continuous linear operator of $H$ into itself, $k(t) \in L^1_{loc}(0,+\infty)$ and $\tau >0$ is a delay parameter, with $F:H\rightarrow H$ is a Lipschitz function. The main purpose of \cite{vilmos2022}  was to give a well-posedness result and an exponential decay estimate for the model \eqref{general} with a damping coefficient $k(t)$ belonging only to $L^1_{loc}$. By using the semigroup approach combined with Gronwall's inequality, under some mild assumptions on the involved functions and parameters, the well-posedness of the problem was established, and an exponential decay estimate was obtained. 

\subsection{Main results}
With this state of the art,  let us now present our main results, which give a necessary next step to understanding the asymptotic behavior for generalized dispersive systems.  

From now on, for the sake of simplicity, the norms in the spaces $L^p(\RR)$ and $L^\infty (\RR)$ will be denoted by  $\Vert \cdot\Vert_p$ and  $\Vert \cdot\Vert_\infty$,   respectively. 
Furthermore, we introduce the  Banach space
\begin{equation*}
{\mathcal{B}}_{s,T}
:= C([0,T];H^s(\RR ))\cap L^2(0,T;H^{s+j}(\RR))
\end{equation*}
with the natural norm
\begin{equation*}
\Vert u\Vert_{\mathcal{B}_{s,T}}=\Vert u\Vert_{ C([0,T];H^s(\RR))}+\Vert \partial^{s+j}_xu\Vert_{L^2(0,T;L^2(\RR))}.
\end{equation*}
Here, we will consider the $H^s(\R)-$norm defined as
\[
\|u\|_{H^s(\R)} = \|(1+\xi^2)^{\frac{s}{2}}\hat{u}(\xi)\|_2,  
\]  
where $\hat{u}$ denotes the Fourier transform of $u$. Moreover, we mention that an alternative equivalent norm for $\|\cdot\|_{H^s(\R)}$ can also be defined using interpolation methods, for details see \cite{lions}. For $s=0$ we omit the subscript $s$, so that $\mathcal{B}_{T}=\mathcal{B}_{0,T}$.  

In the following Theorems and Corollaries, we always assume that
\begin{equation}\label{general-assumptions}
j,m\in\mathbb{N},\quad
m\leq j,\qtq{and}
1\leq p < 2j.
\end{equation}
Our first result ensures the well-posedness of (\ref{11}) in the space $H^s(\R)$:
\begin{Theorem}\label{teo2} 
Assume \eqref{general-assumptions}, and let  $s\in[0,2j+1]$.  
In addition, assume that
\begin{equation*}
\lambda_0, \lambda \in L^{\infty}(\R)\qtq{if}s=0,\qtq{and}
\lambda=0,\ \lambda_0\in H^j(\R)\qtq{if}s>0.
\end{equation*}  
Then, for $s>0$, any $u_0 \in H^s(\R)$ and $T>0$, the IVP (\ref{11}) admits a unique solution $u \in \mathcal{B}_{s,T}$. Moreover, if $s=0$, $u_0 \in C([-\tau,0];L^2(\R))$ and $T>0$, the IVP (\ref{11}) admits a unique solution $u \in \mathcal{B}_{T}$.
Moreover, there exists a nondecreasing continuous function $\beta_s: \R^+ \rightarrow \R^+$, such that
\begin{equation*}
\|u\|_{\B_{s,T}} \leq \beta_s(\|u_0\|_2)\|u_0\|_{H^s(\R)}.
\end{equation*}
\end{Theorem}


\begin{Remark}\label{remark1}
The same result as stated in Theorem \ref{teo2} is obtained when considering the Cauchy problem described in equation \eqref{11} without the presence of a time delay, i.e., when the parameter $\tau$ is set to be zero.
\end{Remark}
Consequently, the subsequent result demonstrates that every mild solution of equation \eqref{11} qualifies as a regular solution when the origin is not taken into account.

\begin{Corollary}\label{corollarywellposedness} Consider $\lambda=0$. Under the assumptions of Theorem \ref{teo2}, for any $u_0 \in L^2(\R)$, the corresponding solution $u$ of \eqref{11} belongs to 
$$
{\mathcal{B}}_{2j+1,[\varepsilon, T]}
:= C([\varepsilon,T];H^{2j+1}(\RR ))\cap L^2(\varepsilon,T;H^{3j+1}(\RR))
$$
for every $0<\varepsilon<T$. 
\end{Corollary}


The second main result is related to the exponential stabilization of  \eqref{11}.  For that,  consider initially the following assumption 
\begin{equation}\label{15}
\lambda_0 (x) \ge \gamma_0 \qtq{for a.e.} x\in \RR
\end{equation}
with some positive constant $\gamma_0.$ If the coefficient of the delay term $\lambda$ satisfies the estimate $\norm{\lambda}_\infty <\gamma_0,$ we can obtain the exponential decay estimates for the  ${\mathcal E}(t)$ associated to the solution of the system \eqref{11}, where  
\begin{equation*} {\mathcal E}(t):= {\mathcal E}(u(t))=\frac 12\int_\RR u^2 (x,t) dx+\frac 12\int_{t-\tau }^t\int_\RR e^{-(t-s)}\vert \lambda (x)\vert u^2 (x,s)\ dx\  ds. 
\end{equation*} 
In that case,  the delay effect is compensated by the undelayed damping term (cf. \cite{NPSicon06}), and the following result holds:

\begin{Theorem}\label{t36} 
Assume \eqref{general-assumptions}. Let $\lambda_0 \in L^\infty (\RR)$ satisfying \eqref{15} and $\lambda\in L^\infty (\RR)$ satisfying
\begin{equation}\label{27}
\frac {e^\tau +1} 2\vert \lambda (x)\vert \le \gamma+ \beta(x)\qtq{for a.e.}x\in\RR
\end{equation}
with $\beta \in L^q(\R)$ for some $1\le q<\infty$, such that
\begin{equation}\label{28}
0\le\gamma< \gamma_0 \qtq{and}
\norm{\beta}_q<\Big(\frac{\gamma_0-\gamma }{c_q}\Big)^{1-\frac 1 {2qj}},  \quad \text{where} \quad c_q:=\Big (1 - \frac 1 {2qj}\Big ) \Big (\frac {2^{2j-1}C^{2j}}{qj} \Big )^{\frac {1} {2qj-1}}.
\end{equation}
Here $C$ is the constant given by the Gagliardo--Nirenberg inequality (see inequality \eqref{e87} below).
Then the solution of \eqref{11}  is exponentially stable. 
More precisely,  the solution $u$ of \eqref{11} satisfies the estimate
\begin{equation*}
{\mathcal E}(t) \le C(u_0,\tau) e^{-\nu_j t}
\end{equation*}
with
\begin{equation}\label{210}
\nu_j = \min \left\{2 \Big (
\gamma_0 -\gamma -c_q
\Vert \beta \Vert_p^{\frac {2qj}{2qj-1}}
\Big ),1\right\}
\end{equation}
and
\begin{equation}\label{211}
C(u_0,\tau)=\frac 12\Vert u(0)\Vert^2_2 +\int_{-\tau}^0 e^s\Vert \lambda\Vert_{\infty} \Vert u(s)\Vert_2^2\, ds.
\end{equation}
\end{Theorem}


After having restricted ourselves to the case where $\lambda_0$ is bounded from below by a positive constant, the next issue is to extend our results to the case where the coefficient of the undelayed feedback $\lambda_0$ is indefinite. We have the following result:
\begin{Theorem}\label{t42}
Assume \eqref{general-assumptions}. Consider $ \lambda\in L^\infty (\RR)$  satisfying  \eqref{27} and $\lambda_0 \in L^\infty (\RR)$ is a indefinite damping, that means that
\begin{equation}\label{41}
\lambda_0 (x) \ge \gamma_0 -\beta_0 (x)  \qtq{for a.e.}x\in \RR ,
\end{equation}
with $\gamma_0>0$ and  $\beta_0, \beta \in L^q(\R)$ for some $1\le q<\infty$, satisfying the inequalities
\begin{equation}\label{42}
\Vert \beta_0\Vert_q<\Big (  \frac {\gamma_0}{c_q} \Big )^{1-\frac 1 {2qj}},
\end{equation}
and
\begin{equation}\label{43}
\Vert  \beta_0 +\beta \Vert_q<\Big (  \frac {\gamma_0-\gamma }{c_q} \Big )^{1-\frac 1 {2qj}}\qtq{for some}0\le\gamma  < \gamma_0.
\end{equation}
Then for every $u_0\in C([-\tau,0]; L^2(\RR))$, the system \eqref{11} has a unique global mild solution.
It is exponentially stable, that is, 
\begin{equation*}
{\mathcal E}(t) \le C(u_0,\tau) e^{-\tilde\nu_j t}
\end{equation*}
with $C(u_0,\tau)$ as in \eqref{211}, and 
\begin{equation}\label{45}
\tilde\nu_j = \min \left\{2 \Big (
\gamma_0 -\gamma  -\frac {2qj-1}{2qj} \Big (\frac 2 q\Big )^{\frac 1 {2qj-1}}
\Vert \beta +\beta_0\Vert_q^{\frac {2qj}{2qj-1}}
\Big ), 1\right \}.
\end{equation}
\end{Theorem}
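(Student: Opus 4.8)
The plan is to combine a short well-posedness step with a Lyapunov argument on the energy \eqref{energy}. Since $\lambda_0$ is now only indefinite, I would first observe that \eqref{42} is precisely the smallness condition that keeps the \emph{undelayed} operator $A:=-(-1)^{j+1}\partial_x^{2j+1}-(-1)^m\partial_x^{2m}-\lambda_0$ the generator of an exponentially stable semigroup on $L^2(\RR)$ — this is the no-delay stabilization already obtained in the paper through the same interpolation that defines $c_q$ in \eqref{26}. With $A$ exponentially stable, $B:=\lambda(x)\,\cdot$ bounded, and nonlinearity $F(u)=-\tfrac{1}{p+1}\partial_x u^{p+1}$, the system \eqref{11} fits the abstract template \eqref{general} of \cite{vilmos2022}; alternatively one proceeds by the method of steps, treating $\lambda(x)u(\cdot,t-\tau)$ on each $[k\tau,(k+1)\tau]$ as a known source and invoking Theorem \ref{teo2} with $s=0$. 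In either route the a priori energy bound below rules out finite-time blow-up and upgrades the local solution to a unique global mild solution for every history $u_0\in C([-\tau,0];L^2(\RR))$.

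Next I would differentiate ${\mathcal E}(t)$ along the flow (writing $u(t)=u(\cdot,t)$ and letting $I(t)$ denote the double integral in \eqref{energy}). Substituting $u_t$ from \eqref{11} and integrating by parts, the skew-adjoint term $(-1)^{j+1}\partial_x^{2j+1}u$ and the nonlinear term $\tfrac{1}{p+1}\partial_x u^{p+1}$ both integrate to zero, while $(-1)^m\partial_x^{2m}u$ produces the dissipation $-\|\partial_x^m u\|_2^2$; differentiating the memory integral contributes $+\tfrac12\int_\RR|\lambda|u^2(t)-\tfrac{e^{-\tau}}{2}\int_\RR|\lambda|u^2(t-\tau)-\tfrac12 I(t)$. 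This gives
\[
\frac{d}{dt}{\mathcal E}(t)= -\|\partial_x^m u\|_2^2 - \int_\RR\lambda_0 u^2 - \int_\RR\lambda\,u(t)u(t-\tau) + \tfrac12\int_\RR|\lambda|u^2(t) - \tfrac{e^{-\tau}}{2}\int_\RR|\lambda|u^2(t-\tau) - \tfrac12 I(t),
\]
an identity I would first derive for smooth solutions and then extend to mild solutions by a density/regularization argument. The delay cross term is then handled by Young's inequality with weight $e^{\tau}$, namely $|u(t)u(t-\tau)|\le \tfrac{e^\tau}{2}u^2(t)+\tfrac{e^{-\tau}}{2}u^2(t-\tau)$; the weight $e^\tau$ is exactly what cancels the $u^2(t-\tau)$ memory contribution, so that the delay is compensated by the damping in the sense of \cite{NPSicon06}. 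What remains is $\tfrac{e^\tau+1}{2}\int_\RR|\lambda|u^2(t)$, which \eqref{27} bounds by $\int_\RR(\gamma+\beta)u^2$, while \eqref{41} yields $-\int_\RR\lambda_0 u^2\le\int_\RR(-\gamma_0+\beta_0)u^2$. Collecting terms,
\[
\frac{d}{dt}{\mathcal E}(t)\le -\|\partial_x^m u\|_2^2 - (\gamma_0-\gamma)\|u\|_2^2 + \int_\RR(\beta+\beta_0)u^2 - \tfrac12 I(t).
\]

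The crux is to absorb $\int_\RR(\beta+\beta_0)u^2$ into the dissipation. Applying Hölder with exponents $q,q'$, then the one-dimensional Gagliardo–Nirenberg inequality $\|u\|_{2q'}^2\le 2^{1/q}\|u_x\|_2^{1/q}\|u\|_2^{2-1/q}$ (which follows from $\|u\|_\infty^2\le 2\|u\|_2\|u_x\|_2$ and $L^2$–$L^\infty$ interpolation), and finally Young's inequality with the conjugate pair $\bigl(2q,\tfrac{2q}{2q-1}\bigr)$ tuned so the gradient term has coefficient one, I expect to obtain
\[
\int_\RR(\beta+\beta_0)u^2 \le \|\partial_x^m u\|_2^2 + c_q\,\|\beta+\beta_0\|_q^{\frac{2q}{2q-1}}\,\|u\|_2^2,
\]
with $c_q$ exactly as in \eqref{26}. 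Condition \eqref{43} then guarantees $\gamma_0-\gamma-c_q\|\beta+\beta_0\|_q^{\frac{2q}{2q-1}}>0$, i.e. $\tilde\nu/2>0$ for $\tilde\nu$ in \eqref{45}. Inserting this and using $\tilde\nu\le 1$ to dominate the memory term, $-\tfrac12 I\le -\tfrac{\tilde\nu}{2}I$, produces $\frac{d}{dt}{\mathcal E}(t)\le -\tilde\nu{\mathcal E}(t)$, whence Gronwall gives the claimed decay with the constant \eqref{211}.

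I anticipate two main obstacles. The first is justifying the formal energy identity for merely mild $L^2$-solutions, which needs an approximation scheme producing enough regularity to integrate by parts and discard the endpoint terms at spatial infinity. The second, more delicate, is the mismatch between the dissipation actually available, $\|\partial_x^m u\|_2^2$, and the first-derivative norm in the Gagliardo–Nirenberg step when $m>1$: one must either interpolate $\|u_x\|_2$ against $\|\partial_x^m u\|_2$ and $\|u\|_2$ and absorb the extra lower-order term, or work directly with the $m$-th order Gagliardo–Nirenberg inequality, in each case checking that the resulting constant is still controlled by $c_q$. Finally, I would stress that the separate smallness hypothesis \eqref{42} is indispensable to the well-posedness step — it is what keeps the no-delay generator $A$ exponentially stable — even though only \eqref{43} governs the decay rate $\tilde\nu$.
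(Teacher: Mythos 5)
Your proposal follows essentially the same route as the paper: the Lyapunov computation you outline is exactly the paper's proof of Theorem \ref{t41} (same energy \eqref{25}, same weighted Young inequality on the delay cross term, same H\"older--Gagliardo--Nirenberg--Young absorption yielding the constant $c_q$ of \eqref{26}, same use of $\tilde\nu\le1$ to dominate the memory integral, then Gronwall), and the paper deduces Theorem \ref{t42} precisely as you do, by observing that the nonlinearity integrates to zero against $u$ and that global well-posedness is supplied by Subsection \ref{s3} (Proposition \ref{r1} and Theorem \ref{t33}) --- your method-of-steps alternative.

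Two of your side claims need correction, however. First, the assertion that \eqref{42} is \emph{indispensable} for well-posedness is wrong, and corresponds to a route (the abstract framework \eqref{general} of \cite{vilmos2022}) that the paper does not take: the paper's well-posedness results (Propositions \ref{p21} and \ref{r1}, Theorem \ref{t33}) require only $\lambda_0,\lambda\in L^\infty(\RR)$, since $-\lambda_0 u$ is treated as a bounded perturbation of the generator and globality needs only the a priori $L^2$ bound furnished by the energy inequality under \eqref{41}, \eqref{27}, \eqref{43}; exponential stability of the undelayed semigroup never enters. In fact \eqref{42} is invoked nowhere in the paper's proof --- neither for well-posedness nor for the decay rate --- and for nonnegative $\beta,\beta_0$ it already follows from \eqref{43}. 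Second, the derivative mismatch you flag as the delicate obstacle is genuine, but the paper resolves it in neither of the two ways you suggest: it fixes $m=j$ (as announced at the start of Subsection \ref{ss21}) and applies its inequality \eqref{215}, $\norm{v}_\infty^2\le 2\norm{v}_2\norm{\partial_x^j v}_2$, so that the term produced by Young's inequality is absorbed by the dissipation $\norm{\partial_x^j u}_2^2$ with $c_q$ unchanged. Your suspicion is nonetheless well-founded: the paper's justification of \eqref{215} (via the embedding $H^j(\RR)\hookrightarrow H^1(\RR)$) proves only the case $j=1$, and for $j>1$ the inequality as stated fails by scaling, so the honest repair is the $m$-th order Gagliardo--Nirenberg interpolation you propose --- at the cost of replacing the exponent $\frac{2q}{2q-1}$, the constant $c_q$, and hence the explicit rate \eqref{45}, by their $m$-dependent analogues.
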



As a consequence of Theorem \ref{t42}, we can find information on the solution of the system \eqref{11} for any interval $[t,t+T]$, for $T>0$.  The following result will be crucial in getting the exponential stabilization in the space $H^{2j+1}(\R)$.
\begin{Corollary}\label{cor2}
Assume \eqref{general-assumptions}, consider $\lambda_0$ and $\lambda$  satisfying \eqref{27} and \eqref{41}, respectively, and let $T>0$.  
Then for every $u_0 \in C([-\tau,0];L^2(\R))$,
the corresponding solution $u$ of the problem (\ref{11}) satisfies the estimate
\begin{equation*}
\|u\|_{\B_{0,[t,t+T]}}\leq  C_{2T} \left \{ 2C(u_0,\tau)e^{- \nu t}+
  \Vert \lambda\Vert_\infty  \Vert u\Vert_{L^1(t-\tau, t; L^2(\RR))} + \Vert \lambda\Vert_\infty^{1/2}  
 \Vert u\Vert_{L^2(t-\tau, t; L^2(\RR))}  \right \}
\end{equation*}
with $C_T$ is given by \eqref{36} below and $C(u_0,\tau)$ defined by \eqref{211}. 
\end{Corollary}


Considering the general equation \eqref{11} in the $H^s$-norm, with $s \in [0, 2j+1]$ without the presence of the time-delay term, that is, $\lambda=\tau=0$, the previous corollary helps us to prove the following general theorem about the exponential decay:
\begin{Theorem}\label{teoexpHs}
Assume \eqref{general-assumptions}, and consider the system \eqref{11} with $\lambda=\tau=0$, and  $\lambda_0\in L^\infty (\RR)$  satisfying  \eqref{15}.  
Then the system \eqref{11} is exponentially stable. 
More precisely, there exist a time $T_0>0$ such that for every $u_0\in H^s(\RR)$ with $s \in [0,2j+1]$, \eqref{11} has a unique global mild solution satisfying
\begin{equation}\label{eqnew118}
\|u(t)\|_{H^s(\R)} \le \tilde{\gamma}(\|u_0\|_2,T_0) \|u_0\|_{H^{s}(\R)}e^{-\eta(s) t} , \quad \text{for $t\geq T_0$},
\end{equation}
where $\eta(s)$ is a positive number, and $\tilde{\gamma}: \R^{+}\times \R^{+} \rightarrow \R^{+}$ is a continuous function. 
\end{Theorem}


\subsection{Comments and article's structure} 

\begin{itemize}
\item[1.] It is important to point out that to prove Theorem \ref{teo2} we use a method introduced by Tartar \cite{tartar1972interpolation} and adapted by Bona and Scott \cite[Theorem 4.3]{bona1976solutions}. This method was first used to prove the global well-posedness of IVP for the KdV equation on the whole line, and here, we adapt the spaces for our purpose.
\item[2.] Theorems \ref{t36} and \ref{t42} generalize the results  in \cite{KomPig2020} for the general differential operator 
$$
Au:=(-1)^{j+1}\partial_x^{2j+1}u(x,t)+(-1)^m\partial_x^{2m}u(x,t), 
$$
that is, considering appropriate values of $j$, we can recover the results in  \cite{KomPig2020}.  
Additionally to that, considering the real line, the results proved here can be seen as extensions of \cite{Chentouf,Chentouf1,CaVi,CCKR, Valein}, showing the dependence of the decay rate of the order $j$ of the operator $A$.
\item[3.] Note that Theorem \ref{teoexpHs} extends (and recovers) the previous results of \cite{CCKR,gallego} by proving the stabilization in ${\mathcal{B}}_{s, T}$  for $ 0\leq s \leq 2j+1$ with only a damping term, without the presence of the time-delay term. 
\item[4.] The main results of the paper are summarized  in the following table: 
\begin{table}[hbt]
\begin{tabular}{|l|c|c|}
\hline
\multicolumn{1}{|c|}{\textbf{Type of the feedback law}}              & \textbf{Well-posedness} & \textbf{Exponential stabilizition} \\ \hline
Damping (indefined and localized)  term  & ${\mathcal{B}}_{0,T}={\mathcal{B}}_{T}$                       & $\|u(t)\|_{2} \leq C(u_0,\tau) e^{-\eta t} $ \\ 
+ Time Delay  term   &                       &            in $L^2$-norm         \\ \hline
Damping  term                                 & ${\mathcal{B}}_{s,T}$            &         $\|u(t)\|_{H^s(\R)} \leq e^{-\eta t}\|u_0\|_{H^s(\R)}$              \\ 
               (indefined and localized)    & $ 0\leq s \leq 2j+1$           &              $0\leq s \leq 2j+1$          \\ \hline
\end{tabular}
\end{table}
\end{itemize}

This paper consists of five parts, including the introduction. In Section \ref{sectionwellposedness} we analyze the well-posedness of the system \eqref{11} when the initial data belongs to space $C([-\tau,0];H^{s}(\R))$, where $s \in [0,2j+1]$ under some assumptions of $\lambda_0$ and $\lambda$.  
Section \ref{sectionexponentialL2} is devoted to the exponential stabilization of the system \eqref{11} in $L^2(\R)$, that is, to the proof of Theorems \ref{t36} and \ref{t42}.  
In Section \ref{sectionexponentialHsj+1} we prove Theorem \ref{teoexpHs}, establishing the exponential stabilization of the system \eqref{11} without the presence of time delay term ($\lambda=\tau=0$) in the space $H^{s}(\R)$, for $s \in [0,2j+1]$. Finally, in Section \ref{sectionconclusion}, we conclude the work with some further considerations.

\section{Well-posedness theory}\label{sectionwellposedness}
In this section, the strategy is to establish the well-posedness theory in $H^s(\R)$, for $s \in [0,2j+1].$ The strategy is to establish the well-posedness results in $L^2(\R)$ and $H^{2j+1}(\R)$, respectively. After that, we will use interpolation arguments, due to Tartar \cite{tartar1972interpolation} and adapted by Bona and Scott \cite[Theorem 4.3]{bona1976solutions}, to achieve Theorem \ref{teo2}.

\subsection{Linear system}  For simplicity,  consider $m=j\in\mathbb{N}$ in the system \eqref{11}. Let us start studying the following linear model associated with \eqref{11}, namely
\begin{equation}\label{12}
\begin{cases}
u_t(x,t)+(-1)^{j+1}\partial_x^{2j+1}u(x,t)+(-1)^j\partial_x^{2j}u(x,t) +\lambda_0 u(x,t)\\\hspace{4cm}+\lambda u(x,t-\tau )=0,
&\qtq{in}\RR\times (0,\infty),\\
u(x, s)=u_0(x, s),&\qtq{in}\RR\times [-\tau, 0].
\end{cases}
\end{equation}
This section is devoted to proving the well-posedness via \textit{semigroup theory}.
 We first take a look at the properties of the following operator  
\begin{equation}\label{A_op} A_{\lambda_0} u:=-(-1)^{j+1}\partial_x^{2j+1}u(x,t)-(-1)^{j}\partial_x^{2j}u(x,t)-\lambda_0 u.
\end{equation}
 The following well-posedness result can be proved.

\begin{Proposition}\label{p21}
If $\lambda_0\in L^\infty (\RR ),$ then the operator $A_{\lambda_0}$ defined by \eqref{A_op}  on ${\mathcal D}(A_{\lambda_0} ):=H^{2j+1}(\RR)$ generates a strongly continuous semigroup in the Hilbert space $H:=L^2(\RR ).$
\end{Proposition}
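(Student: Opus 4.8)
The plan is to apply the Lumer--Phillips theorem after peeling off the lower-order part of the operator. Since $\lambda_0\in L^\infty(\RR)$, multiplication by $\lambda_0$ is a bounded operator on $H=L^2(\RR)$, so by the bounded perturbation (Phillips) theorem it suffices to show that the principal operator
\[
A_0 u:=-(-1)^{j+1}\px^{2j+1}u-(-1)^{j}\px^{2j}u, \qquad {\mathcal D}(A_0)=H^{2j+1}(\RR),
\]
generates a strongly continuous semigroup; then $A_{\lambda_0}=A_0-\lambda_0$ generates one as well. The domain $H^{2j+1}(\RR)$ is dense in $L^2(\RR)$, so $A_0$ is densely defined, and it remains to establish dissipativity together with the range (maximality) condition.

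First I would check dissipativity. Integrating by parts on $\RR$, the odd-order term is skew-symmetric, $\bra{\px^{2j+1}u,u}_{L^2}=-\bra{u,\px^{2j+1}u}_{L^2}$, so its contribution to $\bra{A_0u,u}_{L^2}$ is purely imaginary (and vanishes on real-valued functions), while integrating the even-order term by parts $j$ times gives $\bra{\px^{2j}u,u}_{L^2}=(-1)^j\norm{\px^j u}_2^2$. Collecting the signs yields
\[
\operatorname{Re}\bra{A_0 u,u}_{L^2}=-\norm{\px^j u}_2^2\le 0 \qtq{for all} u\in H^{2j+1}(\RR),
\]
so $A_0$ is dissipative.

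The main point, and the only delicate step, is the range condition: $\lambda I-A_0$ must be surjective onto $L^2(\RR)$ for some $\lambda>0$. Here I would pass to the Fourier side, where $A_0$ acts as multiplication by its symbol $a(\xi)=i\xi^{2j+1}-\xi^{2j}$. For $f\in L^2(\RR)$, the resolvent equation $\lambda u-A_0u=f$ becomes $(\lambda-a(\xi))\widehat u(\xi)=\widehat f(\xi)$. Since $\operatorname{Re}(\lambda-a(\xi))=\lambda+\xi^{2j}\ge\lambda>0$, the factor never vanishes, so $\widehat u=\widehat f/(\lambda-a(\xi))$ defines an $L^2$ function; moreover $|\xi|^{2j+1}=|\operatorname{Im}(\lambda-a(\xi))|\le|\lambda-a(\xi)|$ forces $|\xi|^{2j+1}|\widehat u(\xi)|\le|\widehat f(\xi)|\in L^2$, whence $u\in H^{2j+1}(\RR)={\mathcal D}(A_0)$. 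Thus $\operatorname{Range}(\lambda I-A_0)=L^2(\RR)$, and the bounded resolvent in particular makes $A_0$ closed. I expect this estimate to be the crux: it is essential that the odd-order dispersive symbol supplies exactly the imaginary part $\xi^{2j+1}$, so that the resolvent recovers the full $2j+1$ derivatives needed to land back in the domain, whereas the even-order term only ensures dissipativity and by itself would not control the top-order derivative. With density, dissipativity, and this range condition established, Lumer--Phillips gives that $A_0$ generates a $C_0$-semigroup of contractions, and the bounded perturbation theorem then yields that $A_{\lambda_0}=A_0-\lambda_0$ generates a strongly continuous semigroup on $H=L^2(\RR)$.
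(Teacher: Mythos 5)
Your proof is correct and follows essentially the same route as the paper: a bounded-perturbation reduction to the principal part $A_0$, then Lumer--Phillips with dissipativity obtained by integration by parts and the range condition verified on the Fourier side. Your quantitative symbol estimates (real part $\lambda+\xi^{2j}$ giving invertibility, imaginary part $\xi^{2j+1}$ giving the $H^{2j+1}$ regularity of the resolvent) are just a cleaner version of the paper's boundedness argument for the multiplier $\bigl(1+|\xi|+\cdots+|\xi|^{2j+1}\bigr)/|h(\xi)|$.
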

\begin{proof}
Note that $A_{\lambda_0}= A +\bar{A}_{\lambda_0}$, where $$Au=-(-1)^{j+1}\partial_x^{2j+1}u(x,t)-(-1)^{j}\partial_x^{2j}u(x,t)$$ and $$\bar{A}_{\lambda_0}= -\lambda_0 u.$$ It suffices to observe that $\bar{A}_{\lambda_0}$ is a bounded perturbation of the operator $A$ and to prove that $A$ generates a strongly continuous semigroup in  $L^2(\RR ).$ Indeed, 
according to the Lumer-Phillips theorem it is sufficient to check that $A $ is dissipative and that $I-A $ is onto. The dissipativity follows by a direct computation: if $u_{0} \in H^{2j+1}(\mathbb{R})$ is real-valued, then $u$ is also real-valued and
\begin{equation*}
\begin{aligned}
(A  u, u)_{H} &=\int_{-\infty}^{\infty}\left(-(-1)^{j+1}\partial_x^{2j+1}u-(-1)^{j}\partial_x^{2j}u\right) u d x =-\int_{-\infty}^{\infty}( \partial^j_x u)^2d x  \leqslant 0,
\end{aligned}
\end{equation*}
since we have 
$$\int_{-\infty}^{\infty}(-1)^{j+1}(\partial_x^{2j+1}u)u dx=0.$$
Thanks to the fact that  $\Re A  v=A (\Re v)$ for all $v \in H^{2j+1}(\mathbb{R})$, it follows that
$$
\Re(A  u, u)_{H}=-\int_{-\infty}^{\infty}( \partial^j_x u)^2d x \leqslant 0
$$
for all $u_{0} \in H^{2j+1}(\mathbb{R})$.

In order to prove that $I-A$ is onto, it remains to show that for every $f \in L^{2}(\mathbb{R})$ there exists $u \in H^{2j+1}(\mathbb{R})$ satisfying the equality
$$
(-1)^{j+1}\partial_x^{2j+1}u(x,t)+(-1)^{j}\partial_x^{2j}u(x,t)+ u=f.
$$
Taking the Fourier transform in the previous equation, it is equivalent to
$$
\hat{u}(\xi)=\frac{\hat{f}(\xi)}{1+((-1)^{j}(i\xi)^{2j})+(-1)^{j+1}(i\xi)^{2j+1}}.
$$
This is possible because the denominator
$$
h(\xi):=1+((-1)^{j}(i\xi)^{2j})+(-1)^{j+1}(i\xi)^{2j+1}
$$
never vanishes.  Since, moreover, $h(\xi)$ is a continuous function satisfying $|h(\xi)| \rightarrow \infty$ as $|\xi| \rightarrow \infty, 1 / h$ is bounded, and therefore the last equation has a unique solution $\hat{u} \in L^{2}(\mathbb{R})$.
Finally, since the function
$$\frac{(1+\xi^2)^{2j+1}}{|h(\xi)|^2}=\frac{(1+\xi^2)^{2j+1}}{\left|1+((-1)^{j}(i\xi)^{2j})+(-1)^{j+1}(i\xi)^{2j+1}\right|}$$
tends to 1 as $|\xi| \rightarrow \infty$ and hence it is bounded by some constant $M$ on $\mathbb{R}$, we conclude that
$$\|u\|_{H^{2j+1}(\R)}^2=\|(1+ \xi^2)^{\frac{2j+1}{2}} \hat{u}\|^2_2 \leqslant M\|\hat{f}\|_2^2.$$
Since $\hat{f} \in L^{2}(\mathbb{R})$, this implies the regularity property $u \in H^{2j+1}(\mathbb{R})$.
\end{proof}

Let us now use an iterative procedure (see e.g. \cite{NP15}) and semigroup theory (see e.g. \cite{Pazy}),  to prove that \eqref{12} is well-posed.

\begin{Theorem}\label{t22}
If $\lambda_0,\lambda\in L^\infty (\RR )$ and $u_0\in C([-\tau ,0]; H)$,  then there exists a unique solution $u\in C([-\tau, +\infty); H)$ of the problem \eqref{12}.
\end{Theorem}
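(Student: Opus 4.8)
The plan is to recast \eqref{12} as an abstract inhomogeneous Cauchy problem governed by the generator $A_{\lambda_0}$ furnished by Proposition \ref{p21}, and then to solve it by the classical step method on successive intervals of length $\tau$, exploiting that on each such interval the delayed term is already determined by the previous step. Let $(S(t))_{t\ge 0}$ denote the strongly continuous semigroup generated by $A_{\lambda_0}$ on $H=L^2(\RR)$, whose existence is guaranteed by Proposition \ref{p21}. Since multiplication by $\lambda\in L^\infty(\RR)$ is a bounded operator on $H$, the problem \eqref{12} reads
\begin{equation*}
u'(t)=A_{\lambda_0}u(t)-\lambda\,u(\cdot,t-\tau),\qquad u(\cdot,0)=u_0(\cdot,0).
\end{equation*}

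First I would treat the interval $[0,\tau]$. For $t\in[0,\tau]$ one has $t-\tau\in[-\tau,0]$, so the delayed term $f_0(t):=-\lambda\,u_0(\cdot,t-\tau)$ is completely prescribed by the history $u_0$; as $u_0\in C([-\tau,0];H)$ and $\lambda\in L^\infty(\RR)$, the map $t\mapsto f_0(t)$ belongs to $C([0,\tau];H)$. The variation of constants (Duhamel) formula then yields a unique mild solution
\begin{equation*}
u(t)=S(t)u_0(\cdot,0)+\int_0^t S(t-s)f_0(s)\,ds,\qquad t\in[0,\tau],
\end{equation*}
and the strong continuity of $S(\cdot)$ together with $f_0\in C([0,\tau];H)$ forces $u\in C([0,\tau];H)$, the value $u(\cdot,0)=u_0(\cdot,0)$ matching the history continuously at the origin.

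Next I would iterate. Assume inductively that $u$ has been constructed as an element of $C([-\tau,n\tau];H)$ for some $n\ge 1$. On $[n\tau,(n+1)\tau]$ the argument $t-\tau$ ranges over $[(n-1)\tau,n\tau]$, where $u$ is already known and continuous; hence $f_n(t):=-\lambda\,u(\cdot,t-\tau)\in C([n\tau,(n+1)\tau];H)$, and the formula
\begin{equation*}
u(t)=S(t-n\tau)u(\cdot,n\tau)+\int_{n\tau}^{t}S(t-s)f_n(s)\,ds,\qquad t\in[n\tau,(n+1)\tau],
\end{equation*}
defines the unique continuous extension of $u$, its value at $t=n\tau$ coinciding with the previously constructed one so that no jump is introduced at the junction. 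Since $\bigcup_{n\ge 0}[n\tau,(n+1)\tau]=[0,\infty)$, this produces a solution $u\in C([-\tau,+\infty);H)$.

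Uniqueness then follows on each step from the uniqueness of the mild solution of a linear inhomogeneous Cauchy problem with prescribed source and initial value—equivalently, a Gronwall argument applied to the difference of two solutions on $[n\tau,(n+1)\tau]$, using $\|S(t)\|\le Me^{\omega t}$ and the boundedness of $\lambda$—and propagates to all of $[0,\infty)$ by induction. I do not expect any genuine analytic difficulty here: the one point that needs care is the continuity of each source term $f_n$ at the endpoints of its step and the matching of the solution at the junction points $n\tau$. Because the construction is linear and each step is solved on a bounded interval, no finite-time blow-up can occur, so the iteration genuinely reaches every $t>0$; this bookkeeping, rather than any estimate, is the main thing to get right.
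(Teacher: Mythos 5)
Your proposal is correct and follows essentially the same route as the paper: both recast \eqref{12} on each interval $[n\tau,(n+1)\tau]$ as an inhomogeneous Cauchy problem for the generator $A_{\lambda_0}$ with the delayed term treated as a known source, and extend by the step method (the paper calls it a bootstrap argument). Your write-up is somewhat more explicit about the Duhamel formula, the continuity matching at the junctions $n\tau$, and the uniqueness argument, but these are details the paper's proof implicitly relies on as well.
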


\begin{proof}
Consider the interval $[0,\tau].$ Note that \eqref{12}  can be seen as an inhomogeneous Cauchy problem of the form
\begin{equation*}
\begin{cases}
u_t(t)-A_{\lambda_0}u(t)=g_0(t),&\qtq{in}(0,\tau ),\\
u(0)=u_0,
\end{cases}
\end{equation*}
with $g_0(t)=-\lambda u_0(t-\tau),$ for $t\in [0,\tau],$ which have unique solution $u(\cdot) \in C([0,\tau), H).$ Now, considering the interval $t\in [\tau, 2\tau ]$,  problem \eqref{12} can be rewritten as
\begin{equation*}
\begin{cases}
u_t(t)-A_{\lambda_0}u(t)=g_1(t),&\qtq{in}(\tau,2\tau ),\\
u(\tau )=u({\tau}_{-}),
\end{cases}
\end{equation*}
where $g_1(t)=-\lambda u(t-\tau).$ Thanks to the first step of the proof the function $u(t)$, for $t\in [0,\tau]$, its known,  thus $g_1(t)$ can be considered as a known function for $t\in [\tau , 2\tau].$ Therefore, this analysis yields the existence of a solution $u(\cdot )\in C ([0, 2\tau ], H).$ By a bootstrap argument we get a solution $u\in C([0, \infty ), H).$
\end{proof}

\subsection{Non-homogeneous system}

We are interested in extending the previous results to the nonlinear system \eqref{11}. In this way, we first consider the corresponding linear inhomogeneous initial value problem
\begin{equation}\label{31}
\begin{cases}
u_t(x,t)+(-1)^{j+1}\partial_x^{2j+1}u(x,t)+(-1)^j\partial_x^{2j}u(x,t) +\lambda_0 u(x,t)\\\hspace{4cm}+\lambda u(x,t-\tau )=f(x,t), &\qtq{in}\RR\times (0,T),\\
u(x,s)=u_0(x,s), &\qtq{in}\RR\times [-\tau, 0],
\end{cases}
\end{equation}
for some $T>0.$ Consider the operator $A_{\lambda_0}$ defined by Proposition \ref{p21}. So, we may rewrite \eqref{31} in the following way
\begin{equation}\label{32}
\begin{cases}
u_t(x,t)+\lambda u(x,t-\tau )=A_{\lambda_0} u(x,t) +f(x,t),&\qtq{in}\RR\times (0,T),\\
u(x,s)=u_0(x,s),&\qtq{in}\RR\times [-\tau, 0].
\end{cases}
\end{equation}

Since $A_{\lambda_0}$ generates a strongly continuous semigroup of contractions in $L^2(\RR )$ by Proposition \ref{p21}, for any given value $u_0\in C([-\tau, 0], H)$ and $f\in L^1(0,T;L^2(\RR)),$ problem \eqref{32} has a unique mild solution $u\in C([-\tau, T];L^2(\RR)),$ satisfying  Duhamel's formula:
\begin{equation}\label{33}
u(t)=S(t)u_0(0)-\int_0^t S(t-s)\lambda u(s-\tau )\, ds +\int_0^t S(t-s) f(s) \, ds,\quad t\in [0,T].
\end{equation}
Therefore, we can prove that the mild solution of \eqref{32} depends continuously on the initial data.

\begin{Proposition}
If $u_0\in C([-\tau, 0], L^2(\RR))$ and $f\in L^1(0,T;L^2(\RR)),$ then the solution of  \eqref{32}
satisfies the following estimate:
\begin{equation}\label{34}
\Vert u\Vert_
{C([0,T];L^2(\RR))} \le
e^{\Vert\lambda\Vert_\infty T}
\left (\left\|u_0(0)\right\|_2+\Vert f\Vert_{L^1(0,T;L^2(\RR))}+
\Vert \lambda\Vert_\infty\int_{-\tau }^0\Vert u(s)\Vert_2 \, ds \right )
\end{equation}
and
\begin{equation}\label{35_1}
\Vert u\Vert_
{C([-\tau,T];L^2(\RR))} \le
C
\left ( \Vert u_0\Vert_{C([-\tau,0];L^2(\RR))}+\Vert f\Vert_{L^1(0,T;L^2(\RR))}\right ),
\end{equation}
where $\lambda$ is a function in $L^{\infty}(\R)$ (see \eqref{27}) and $C$ is a positive constant given by $C=C(\|\lambda\|_{\infty}, T, \tau)$.
\end{Proposition}

\begin{proof}
The Duhamel's formula \eqref{33} give us that
\begin{align*}
\Vert u(t)\Vert_2
\le & \left\|u_0(0)\right\|_2
+\Vert \lambda\Vert_\infty\int_0^t\Vert u(s-\tau )\Vert_2\, ds+\Vert f\Vert_{L^1(0,T;L^2(\RR))}\\
\le & \left\|u_0(0)\right\|_2+\Vert f\Vert_{L^1(0,T;L^2(\RR))}+\Vert \lambda\Vert_\infty\int_{-\tau}^0\Vert u(s)\Vert_2\, ds+\Vert \lambda\Vert_\infty\int_0^t\Vert u(s)\Vert_2\, ds \\ 
=  & \left\|u_0(0)\right\|_2+\Vert f\Vert_{L^1(0,T;L^2(\RR))}+\Vert \lambda\Vert_\infty\int_{-\tau}^t\Vert u(s)\Vert_2\, ds . 
\end{align*}
Thus, the result follows as a direct application of Gronwall's lemma.
\end{proof}

As a consequence of the previous inequality, we have the following proposition.
\begin{Proposition}\label{p32}
Let $u_0\in C([-\tau, 0],L^2(\RR))$ and $f\in L^1(0,T;L^2(\RR)),$ then the solution of  \eqref{32} belongs to ${\mathcal B}_T$ and the following estimate holds true:
\begin{equation}\label{35}
\begin{split}
\Vert u\Vert_{{\mathcal B}_T}\le C_T \left \{  \left\|u_0(0)\right\|_2+\Vert f\Vert_{L^1(0,T;L^2(\RR))}+
  \Vert \lambda\Vert_\infty  \Vert u\Vert_{L^1(-\tau, 0; L^2(\RR))}\right.\\ \left. + \Vert \lambda\Vert_\infty^{1/2}  
 \Vert u\Vert_{L^2(-\tau, 0; L^2(\RR))}  \right \}
 \end{split}
\end{equation}
where
\begin{equation}\label{36}
C_T=\sqrt{\frac 3 2} \left (1+e^{2\Vert \lambda\Vert_\infty T}\right )^{1/2}
e^{(\Vert \lambda\Vert_\infty +\Vert \lambda_0\Vert_\infty) T}.
\end{equation}
Moreover, we have 
\begin{align}\label{37}
\begin{split}
\frac 12 \Vert u(t)\Vert_2^2+\int_0^t\Vert\partial^j_xu\Vert^2_2 ds&+\int_0^t\int_\RR \lambda_0 u^2(x,s)dx ds
+\int_0^t\int_\RR\lambda u(x, s-\tau )u(x, s)dxds \\
&=\frac 12 \left\|u_0(0)\right\|_2+\int_0^t\int_\RR f(x,s) u(x,s) dxds,
\end{split}
\end{align}
for all $t\in [0,T]$,
\end{Proposition}
\begin{proof}
Multiplying the equation \eqref{31} by $u$ and integrating by parts, taking into account that  
$$\int_{\mathbb{R}}(-1)^{j+1}(\partial_x^{2j+1}u)u dx=0 \quad \text{and}\quad \int_{\mathbb{R}}(-1)^j\partial_x^{2j}u(x,t)u dx=\int_{\mathbb{R}}(\partial^j_xu)^2dx,$$
the relation \eqref{37} holds.   

Let us now prove \eqref{35}.  Thanks to \eqref{34} we infer that
\begin{align*}
\begin{split}
\Vert u(t)\Vert_2^2
&+2\int_0^t \Vert \partial^j_xu\Vert_2^2 ds
\le \left\|u_0(0)\right\|_2^2\\
&+2\Vert f\Vert_{L^1(0,T;L^2(\RR))} e^{\Vert \lambda\Vert_\infty T}\left (  \left\|u_0(0)\right\|_2+\Vert f\Vert_{L^1(0,T;L^2(\RR))}+
\Vert \lambda\Vert_\infty\int_{-\tau }^0\Vert u(s)\Vert_2 \, ds \right ) \\
&+2\Vert\lambda_0\Vert_\infty \int_0^t\Vert u(s)\Vert_2^2ds +\Vert\lambda\Vert_\infty\int_0^t\Vert u(s-\tau )\Vert_2^2ds +\Vert\lambda\Vert_\infty\int_0^t\Vert u(s)\Vert_2^2ds.
\end{split}
\end{align*}
The previous inequality, together with the following one
\begin{equation*}
\int_0^t\Vert u(s-\tau )\Vert_2^2 \, ds\le \int_{-\tau}^0\Vert u(s)\Vert_2^2 \, ds +\int_0^t\Vert u(s)\Vert_2^2\, ds,
\end{equation*}
ensures that
\begin{align}\label{38}
\begin{split}
\Vert u(t)\Vert_2^2 &+2\int_0^t \Vert \partial^j_xu\Vert_2^2ds \le\left\|u_0(0)\right\|_2^2+\Vert f\Vert_{L^1(0,T;L^2(\RR))}^2\\
&+e^{2\Vert \lambda\Vert_\infty T}\left ( \left\|u_0(0)\right\|_2+\Vert f\Vert_{L^1(0,T;L^2(\RR))}+\Vert \lambda\Vert_\infty \Vert u\Vert_{L^1(-\tau, 0; L^2(\RR))}  \right )^2 \\
&+\Vert \lambda\Vert_\infty
\Vert u\Vert_{L^2(-\tau, 0; L^2(\RR))}^2
+ 2(\Vert \lambda\Vert_\infty +\Vert \lambda_0\Vert_\infty)
\int_0^t\Vert u\Vert^2_2ds.
\end{split}
\end{align}
From \eqref{38} we have
\begin{equation}\label{help}
\begin{split}
\Vert u(t)\Vert_2^2 &+2\int_0^t \Vert \partial^j_xu\Vert_2^2ds\le 2( \Vert \lambda\Vert_\infty +\Vert \lambda_0\Vert_\infty)
\int_0^t\Vert u\Vert^2_2ds\\
&+\Big (1+e^{2\Vert \lambda\Vert_\infty T}\Big )
\left\{  \left\|u_0(0)\right\|_2+\Vert f\Vert_{L^1(0,T;L^2(\RR))} \right.\\
&\left.  +\Vert \lambda\Vert_\infty  \Vert u\Vert_{L^1(-\tau, 0; L^2(\RR))} + \Vert \lambda\Vert_\infty^{1/2}  
 \Vert u\Vert_{L^2(-\tau, 0; L^2(\RR))}  \right\}^2.
\end{split}
\end{equation}
An application of Gronwall's lemma gives the following:
\begin{align*}
\begin{split}
\Vert u(t)\Vert_2^2 &+2\int_0^t \Vert \partial^j_xu\Vert_2^2\,  ds
\le \left (1+e^{2\Vert \lambda\Vert_\infty T}\right )
 e^{2(\Vert \lambda\Vert_\infty +\Vert \lambda_0\Vert_\infty) T}\times\\
&\times
\Big \{\left\|u_0(0)\right\|_2+\Vert f\Vert_{L^1(0,T;L^2(\RR))}+
  \Vert \lambda\Vert_\infty  \Vert u\Vert_{L^1(-\tau, 0; L^2(\RR))} + \Vert \lambda\Vert_\infty^{1/2}  
 \Vert u\Vert_{L^2(-\tau, 0; L^2(\RR))}   \Big \}^2,
\end{split}
\end{align*}
thus
\begin{align*}
\begin{split}
\Vert u\Vert_{{\mathcal B}_T}^2\le&
\frac 3 2 \left (1+e^{2\Vert \lambda\Vert_\infty T}\right )
e^{2(\Vert \lambda\Vert_\infty +\Vert \lambda_0\Vert_\infty) T}
\times\\
&\times
\Big \{ \left\|u_0(0)\right\|_2+\Vert f\Vert_{L^1(0,T;L^2(\RR))}+
  \Vert \lambda\Vert_\infty  \Vert u\Vert_{L^1(-\tau, 0; L^2(\RR))} + \Vert \lambda\Vert_\infty^{1/2}  
 \Vert u\Vert_{L^2(-\tau, 0; L^2(\RR))} \Big \}^2,
\end{split}
\end{align*}
and so
\begin{align*}
\begin{split}
\Vert u\Vert_{{\mathcal B}_T}\le&
\sqrt{\frac 3 2} \left (1+e^{2\Vert \lambda\Vert_\infty T}\right )^{1/2}
e^{(\Vert \lambda\Vert_\infty +\Vert \lambda_0\Vert_\infty) T}
\times\\
&\times
\Big \{ \left\|u_0(0)\right\|_2+\Vert f\Vert_{L^1(0,T;L^2(\RR))}+
  \Vert \lambda\Vert_\infty  \Vert u\Vert_{L^1(-\tau, 0; L^2(\RR))} + \Vert \lambda\Vert_\infty^{1/2}  
 \Vert u\Vert_{L^2(-\tau, 0; L^2(\RR))}   \Big \}
\end{split}
\end{align*}
showing \eqref{35} with $C_T$ defined by \eqref{36}. 
\end{proof}

\subsection{Nonlinear estimates} In this subsection, we present some nonlinear estimates that will be used to prove Theorem \ref{teo2}.

\begin{Lemma}\label{lm1}
Let $1\leq p < 2j$ with $j\geq 1$. Then, there exists a positive constant $C$, such that, for any $T>0$ and $u,v \in \mathcal{B}_{T}$, we have
\begin{equation*}
\|u^pv_x\|_{L^1(0,T;L^2(\R))}\leq 2^{\frac{p}{2}}CT^{\frac{4j-p-2}{4j}} 
\|u\|_{\mathcal{B}_T}^p\|v\|_{\mathcal{B}_T}.
\end{equation*}
\end{Lemma}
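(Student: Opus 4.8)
The plan is to reduce everything to a pointwise-in-time estimate in $x$, bound the two spatial factors by interpolation, and then integrate in $t$ with a carefully balanced Hölder inequality; the exponent $\frac{2j-p}{4j}$ on $T$ and the constant $2^{p/2}$ should both emerge from this bookkeeping. First I would fix $t$ and apply Hölder in the space variable in the crude form $\|u^pv_x\|_{L^2(\R)}\le \|u\|_\infty^p\,\|v_x\|_{L^2(\R)}$, which isolates an $L^\infty$-norm of $u$ (to be handled by an Agmon-type inequality) and a single $L^2$-norm of $v_x$.

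Next I would convert $\|u\|_\infty^p$ into quantities controlled by $\mathcal B_T$. The factor $2^{p/2}$ strongly suggests using the one-dimensional Agmon inequality $\|u\|_\infty^2\le 2\|u\|_2\|u_x\|_2$ (proved directly from $u(x)^2=2\int_{-\infty}^x uu_x$), which gives $\|u\|_\infty^p\le 2^{p/2}\|u\|_2^{p/2}\|u_x\|_2^{p/2}$. To express $u_x$ through the highest derivative that $\mathcal B_T$ controls, I would use the homogeneous Plancherel interpolation $\|u_x\|_2\le \|u\|_2^{1-1/j}\|\partial_x^j u\|_2^{1/j}$ (valid for $j\ge1$ with constant $1$, since $\int|\xi|^2|\hat u|^2\le(\int|\hat u|^2)^{1-1/j}(\int|\xi|^{2j}|\hat u|^2)^{1/j}$). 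Combining yields, pointwise in $t$, the bound $\|u\|_\infty^p\le 2^{p/2}\|u\|_2^{p(1-\frac1{2j})}\|\partial_x^j u\|_2^{\frac{p}{2j}}$. Bounding the $C([0,T];L^2)$ factor by $\|u\|_2\le\|u\|_{\mathcal B_T}$ leaves the genuinely $t$-dependent factors $\|\partial_x^j u(t)\|_2^{\frac{p}{2j}}$ and $\|v_x(t)\|_2$ to be integrated.

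The heart of the argument is then a three-term Hölder inequality in $t$ applied to $\int_0^T \|\partial_x^j u\|_2^{\frac{p}{2j}}\,\|v_x\|_2\,dt$, with the constant function $1$ as the third factor. I would use the exponents $\frac{4j}{p}$, $2$, and $\frac{4j}{2j-p}$; the key verification is that their reciprocals sum to one, namely $\frac{p}{4j}+\frac12+\frac{2j-p}{4j}=\frac{2j}{4j}+\frac12=1$, which is precisely what forces the power of $T$ to be $\frac{2j-p}{4j}$. In the first slot, $\|\partial_x^j u\|_2^{\frac{p}{2j}}\in L^{4j/p}(0,T)$ with norm equal to $\|\partial_x^j u\|_{L^2(0,T;L^2)}^{\frac{p}{2j}}\le\|u\|_{\mathcal B_T}^{\frac p{2j}}$; together with the factor already extracted this assembles the full $\|u\|_{\mathcal B_T}^p$. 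In the second slot, $\|v_x\|_2\in L^2(0,T)$, controlled by $\|v\|_{\mathcal B_T}$; the third slot contributes $\|1\|_{L^{4j/(2j-p)}(0,T)}=T^{\frac{2j-p}{4j}}$.

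The step I expect to be the main obstacle is controlling $\|v_x\|_{L^2(0,T;L^2(\R))}$ by $\|v\|_{\mathcal B_T}$ cleanly for $j>1$: the space $\mathcal B_T$ only carries $v$ in $C([0,T];L^2)$ and $\partial_x^j v$ in $L^2(0,T;L^2)$, so passing to the intermediate derivative $v_x$ requires the embedding $H^j(\R)\hookrightarrow H^1(\R)$ (equivalently the inequality $|\xi|^2\le \varepsilon|\xi|^{2j}+C_\varepsilon$) and one must check that the resulting constant is absorbed into $C$ without degrading the stated power of $T$. Once this interpolation is in place and the Hölder exponents are confirmed to be admissible (which also uses $1\le p<2j$, guaranteeing $\frac{4j}{2j-p}>0$ and $\frac{4j}{p}\ge 2$), the estimate closes with the advertised constant $2^{p/2}C\,T^{\frac{2j-p}{4j}}$.
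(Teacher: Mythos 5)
Your proposal is essentially the paper's own proof, step for step: H\"older in $x$, the Agmon inequality $\|u\|_\infty^2\le 2\|u\|_2\|u_x\|_2$ (the source of the factor $2^{p/2}$ there as well), the Gagliardo--Nirenberg interpolation $\|u_x\|_2\le\|u\|_2^{1-1/j}\|\partial_x^ju\|_2^{1/j}$, and the three-factor H\"older inequality in time with the same exponents $4j/p$, $2$, $4j/(2j-p)$. The one point worth commenting on is the step you flag as the main obstacle, namely bounding $\|v_x\|_{L^2(0,T;L^2(\R))}$ by $\|v\|_{\mathcal{B}_T}$ when $j>1$: this is exactly the step the paper performs silently (its last line simply replaces $\|\partial_x v\|_{L^2(0,T;L^2(\R))}$ by $\|v\|_{\mathcal{B}_T}$), and your caution is justified, because the replacement cannot be made with a $T$-independent constant. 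The route you suggest, $|\xi|^2\le\varepsilon|\xi|^{2j}+C_\varepsilon$, gives $\|v_x\|_{L^2(0,T;L^2(\R))}\le C(1+T)^{1/2}\|v\|_{\mathcal{B}_T}$, while interpolating $\|v_x\|_2\le\|v\|_2^{1-1/j}\|\partial_x^jv\|_2^{1/j}$ pointwise in $t$ and redoing the H\"older step produces the power $T^{(4j-p-2)/(4j)}$ in place of $T^{(2j-p)/(4j)}$; both versions are equivalent to the stated one when $T$ ranges over a bounded set (which is all the local fixed-point argument uses), but neither yields the stated estimate uniformly for all $T>0$. In fact, testing the inequality on time-independent profiles $u=\phi(\mu\,\cdot)$, $v=\psi(\lambda\,\cdot)$ with $\mu=T^{-1/(2j-1)}$, $\lambda=T^{-1/(2j)}$ and $T\to\infty$ shows that the lemma with a $T$-uniform constant actually fails for $j\ge 2$; so the difficulty you identified is real, but it is a defect of the statement itself, shared by the paper's proof, rather than of your argument. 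For $j=1$, or after restricting $T$ to a bounded range, your proof is complete and coincides with the paper's.
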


\begin{proof}
Recall that $H^{j}(\R)\hookrightarrow  H^1(\R) \hookrightarrow L^{\infty}(\R)$ for $j\geq 1$.
On the other hand, note that the following inequality holds:
\begin{equation}\label{215}
\Vert v\Vert_\infty^2\le 2\Vert v\Vert_2\Vert  v_x\Vert_{2},
\end{equation}
for all $v\in H^j(\RR)$.  Indeed, if $v \in C_{c}^{\infty}(\mathbb{R})$ and $y \in \mathbb{R}$, then we have the following inequality:
\begin{equation}\label{eqnew1}
\left|v(y)^{2}\right|=\left|\int_{-\infty}^{y} 2 v v_{x} d x\right| \leqslant 2 \int_{-\infty}^{\infty}|v| \cdot\left|v_{x}\right| d x \leqslant 2\|v\|_{2}\left\|v_{x}\right\|_{2},
\end{equation}
proving our estimate for smooth functions. Note that the general case, that is, for $v\in H^j(\RR)$, follows by density.  Thus, we have that
\begin{align*}
\|u^pv_x\|_{L^1(0,T;L^2(\R))}&\leq  C\int _{0}^{T}\|u(t)\|_{\infty}^p\|v_x(t)\|_2dt \\
&\leq  2^{\frac{p}{2}}C\int _{0}^{T}\|u(t)\|_{2}^{\frac{p}{2}}\|u_x(t)\|_{2}^{\frac{p}{2}}\|v_x(t)\|_2dt \\
&\leq  2^{\frac{p}{2}}C\|u\|_{C([0,T];L^2)}^{\frac{p}{2}}\int _{0}^{T}\|u_x(t)\|_{2}^{\frac{p}{2}}\|v_x(t)\|_2dt.
\end{align*}
We recall the following case of the Gagliardo--Nirenberg inequality:
\begin{equation}\label{e87}
\|\partial^m_x\varphi\|_2 \leq C \|\partial^j_x\varphi\|^{\frac{m}{j}}_2\|\varphi\|_2^{1-\frac{m}{j}}, \quad m\leq j.
\end{equation}
Hence, taking into account \eqref{215} and considering $m=1$ in \eqref{e87}, we get that
\begin{equation}\label{215_1}
\Vert \varphi\Vert_\infty^2\le 2\Vert \varphi\Vert_2\Vert  \varphi_x\Vert_{2}\le 2C \Vert \partial_x^j \varphi\Vert_{2}^{\frac{1}{j}}\Vert  \varphi\Vert_2^{2-\frac{1}{j}}.
\end{equation}
Moreover, by using \eqref{e87} and H\"older inequality,  we obtain
\begin{align*}
\|u^pv_x\|_{L^1(0,T;L^2(\R))}&\leq   2^{\frac{p}{2}}C\|u\|_{C([0,T];L^2)}^{\frac{p}{2}}\int _{0}^{T}\|\partial_x^j u(t)\|_{2}^{\frac{p}{2j}}\|u(t)\|_{2}^{\frac{p}{2}\left( 1- \frac{1}{j}\right)}\|v_x(t)\|_2dt \\
&\leq   2^{\frac{p}{2}}C\|u\|_{C([0,T];L^2)}^{\frac{p}{2}\left( 2- \frac{1}{j}\right)} \left( \int _{0}^{T}\|\partial_x^j u(t)\|_{2}^{\frac{p}{2j} . \frac{4j}{p}} \right)^{\frac{p}{4j}} \left( \int _{0}^{T} \|v_x(t)\|_2^2dt \right)^{\frac12}\left( \int _{0}^{T} dt \right)^{\frac{2j-p}{4j}} \\
&\leq   2^{\frac{p}{2}}C T^{\frac{2j-p}{4j}} \|u\|_{C([0,T];L^2)}^{\frac{p}{2}\left( 2- \frac{1}{j}\right)} \|\partial_x^ju\|_{L^2([0,T];L^2)}^{\frac{p}{2j}}\left( \int _{0}^{T} \|\partial_x^jv(t)\|_2^{\frac{2}{j}}\|v(t)\|_2^{2\left(1-\frac{1}{j}\right)}dt \right)^{\frac12}  \\
&\leq   2^{\frac{p}{2}}C T^{\frac{2j-p}{4j}} \|u\|_{C([0,T];L^2)}^{\frac{p}{2}\left( 2- \frac{1}{j}\right)} \|\partial_x^ju\|_{L^2([0,T];L^2)}^{\frac{p}{2j}}\| v\|_{\mathcal{B}_T}^{1-\frac{1}{j}}\left( \int _{0}^{T} \|\partial_x^jv(t)\|_2^{\frac{2}{j}}dt \right)^{\frac12}  \\
&\leq   2^{\frac{p}{2}}C T^{\frac{2j-p}{4j}} \|u\|_{C([0,T];L^2)}^{\frac{p}{2}\left( 2- \frac{1}{j}\right)} \|\partial_x^ju\|_{L^2([0,T];L^2)}^{\frac{p}{2j}} \| v\|_{\mathcal{B}_T}^{1-\frac{1}{j}}\left( \|\partial_x^jv(t)\|_{L^2([0,T];L^2)}^{\frac{1}{j}} T^{\frac12\left(1-\frac{1}{j}\right)}\right) \\
&\leq   2^{\frac{p}{2}}C T^{\frac{2j-p}{4j}}T^{\frac12\left(1-\frac{1}{j}\right)} \|u\|_{\mathcal{B}_T}^{p}   \| v\|_{\mathcal{B}_T},
\end{align*}
proving the result.
\end{proof}

The next lemma gives the estimates for the nonlinear terms. 
\begin{Lemma}\label{lm2}
For any $T>0$, $1\leq p <2j$,  $\lambda_0 \in L^{\infty}(\R)$ and $u, v, w \in \mathcal{B}_{T}$, we have
\begin{enumerate}
\item[(i)] $\|\lambda_0 u\|_{L^1(0,T;L^2(\R))} \leq T^{\frac{1}{2}}\|\lambda_0\|_{\infty}\|u\|_{\mathcal{B}_{T}}$;
\item[(ii)] $\|uw_x\|_{L^1(0,T;L^2(\R))} \leq CT^{\frac{4j-3}{4j}}\|u\|_{\mathcal{B}_T}\|w\|_{\mathcal{B}_{T}}$;
\item[(iii)] $\|u|v|^{p-1}w_x\|_{L^1(0,T;L^2(\R))} \leq 2^{\frac{p}{2}}CT^{\frac{2j-p}{4j}}T^{\frac{1}{2}\left(1-\frac{1}{j}\right)}\|u\|_{\mathcal{B}_{T}}\|w\|_{\mathcal{B}_{T}}\|v\|_{\mathcal{B}_{T}}^{p-1}$;
\item[(iv)]The map $$M: \mathcal{B}_{T} \rightarrow L^1(0,T;L^2(\R)),$$ defined by $Mu:=u^p u_x$,  is locally Lipschitz continuous and satisfies the inequality
\begin{equation*}
\begin{split}
\|Mu-Mv\| |_{L^1(0,T;L^2(\R))} \leq& 2^{\frac{p}{2}}T^{\frac{2j-p}{4j}}T^{\frac{1}{2}\left(1-\frac{1}{j}\right)}C\left(\|u\|_{\mathcal{B}_T}^p+\|u\|_{\mathcal{B}_T}\|v\|_{\mathcal{B}_T}^{p-1} +\|v\|_{\mathcal{B}_T}^{p}\right)\|u-v\|_{\mathcal{B}_T}\\ &+2^{\frac{1}{2}}T^{\frac{4j-3}{4j}}C\|u\|_{\mathcal{B}_T}\|u-v\|_{\mathcal{B}_T} ,
\end{split}
\end{equation*}
with a positive constant $C$.
\end{enumerate}
\end{Lemma}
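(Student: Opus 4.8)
The plan is to treat parts (i)--(iii) as variations on the single computation already carried out in Lemma \ref{lm1}, and then to derive the Lipschitz estimate (iv) from them by an algebraic factorization. The common toolbox consists of three ingredients: H\"older's inequality in the time variable (to convert pointwise-in-time products into the $C([0,T];L^2)$ and $L^2(0,T;L^2)$ factors that make up the $\mathcal{B}_T$ norm, at the cost of a power of $T$), the one-dimensional Agmon--Sobolev inequality $\|f\|_\infty \le 2^{1/2}\|f\|_2^{1/2}\|f_x\|_2^{1/2}$ used in Lemma \ref{lm1}, and the Gagliardo--Nirenberg inequality \eqref{e87}, which trades an intermediate derivative $\|\cdot_x\|_2$ for the top-order quantity $\|\partial_x^j\cdot\|_2$ controlled by $\mathcal{B}_T$ together with an $L^2$ factor.

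For (i) I would simply pull $\lambda_0$ out in $L^\infty$, write $\|\lambda_0 u\|_{L^1(0,T;L^2)} \le \|\lambda_0\|_\infty \int_0^T \|u(t)\|_2\,dt$, and bound the time integral by Cauchy--Schwarz, $\int_0^T \|u(t)\|_2\,dt \le T^{1/2}\big(\int_0^T\|u(t)\|_2^2\,dt\big)^{1/2}$, the last factor being controlled by $\|u\|_{\mathcal{B}_T}$. Part (ii) is the bilinear estimate: bound $\|uw_x\|_2 \le \|u\|_\infty\|w_x\|_2$ pointwise in $t$, insert the Agmon inequality for $\|u\|_\infty$ and \eqref{e87} for $\|u_x\|_2$ and $\|w_x\|_2$, and finish with a H\"older splitting in $t$ chosen so that the top-order factors assemble into $\|\partial_x^j u\|_{L^2(0,T;L^2)}$ and $\|\partial_x^j w\|_{L^2(0,T;L^2)}$ while the remaining suprema become $C([0,T];L^2)$ norms, exactly as in Lemma \ref{lm1} with $p=1$. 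Part (iii) is the genuinely multilinear version: here $\|u|v|^{p-1}w_x\|_2 \le \|u\|_\infty\|v\|_\infty^{p-1}\|w_x\|_2$, so one distributes the Agmon inequality over the single factor $u$ and the $p-1$ factors $v$, applies \eqref{e87} to each resulting $\|\cdot_x\|_2$, and then uses H\"older in $t$ with the same exponents as in Lemma \ref{lm1}; the count of non-differentiated factors is again $p$, which is why the power $T^{(2j-p)/(4j)}$ reappears.

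The main work is (iv), and it is essentially bookkeeping once the algebra is fixed. I would write $Mu-Mv = u^p u_x - v^p v_x = u^p(u-v)_x + (u^p-v^p)v_x$ and factor the difference of $p$-th powers as $u^p-v^p = (u-v)\sum_{k=0}^{p-1}u^{p-1-k}v^{k}$. The first term $u^p(u-v)_x$ is controlled directly by Lemma \ref{lm1} (with the second slot occupied by $u-v$), giving the $\|u\|_{\mathcal{B}_T}^p\|u-v\|_{\mathcal{B}_T}$ contribution. Each summand of the second term has the form $(u-v)\,u^{p-1-k}v^{k}\,v_x$, which is exactly of the type estimated in (iii) with the role of ``$u$'' played by $u-v$, the $p-1$ middle factors being copies of $u$ and $v$, and ``$w_x$'' being $v_x$; summing over $k$ and bounding each monomial $\|u\|^{p-1-k}\|v\|^{k}$ crudely produces the symmetric polynomial $\|u\|_{\mathcal{B}_T}^p+\|u\|_{\mathcal{B}_T}\|v\|_{\mathcal{B}_T}^{p-1}+\|v\|_{\mathcal{B}_T}^p$. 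The residual lowest-order bilinear piece (the one that survives when the telescoping degenerates, e.g. at $p=1$) is absorbed by (ii), which accounts for the separate $T^{1/4}$ term in the statement. Local Lipschitz continuity then follows because the resulting prefactor is a continuous polynomial in $\|u\|_{\mathcal{B}_T}$ and $\|v\|_{\mathcal{B}_T}$, hence bounded on bounded sets. The one point demanding care is matching each monomial in the telescoping sum to the correct trilinear estimate, so that the final constant has precisely the advertised polynomial structure and power of $T$.
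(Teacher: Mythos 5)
Your argument is correct and, for parts (i)--(iii), it is essentially the paper's own proof: H\"older in time for (i), and for (ii)--(iii) the Agmon-type bound \eqref{215}, Gagliardo--Nirenberg \eqref{e87}, and a H\"older splitting in $t$, exactly as in Lemma \ref{lm1}. One bookkeeping caveat in (ii): to land on the stated power $T^{1/4}$ you should apply \eqref{215} in its $j$-th derivative form to $u$ and leave $\|w_x\|_2$ intact, bounding $\|w_x\|_{L^2(0,T;L^2(\R))}$ through the $L^2(0,T;H^{j}(\R))$ component of $\|w\|_{\mathcal{B}_T}$; applying \eqref{e87} to $w_x$ as well, as you propose, produces the power $T^{(4j-3)/(4j)}$, which agrees with $T^{1/4}$ only when $j=1$ (harmless for the fixed-point application, where one takes $T$ small). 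Part (iv) is where you genuinely deviate. The paper splits $Mu-Mv=(u^p-v^p)u_x+v^p(u-v)_x$ and controls the difference of powers by the Mean Value Theorem, $|u^p-v^p|\le C\left(1+|u|^{p-1}+|v|^{p-1}\right)|u-v|$, then feeds the pieces into (ii), (iii) and Lemma \ref{lm1}; you use the mirrored splitting $u^p(u-v)_x+(u^p-v^p)v_x$ with the algebraic factorization $u^p-v^p=(u-v)\sum_{k=0}^{p-1}u^{p-1-k}v^{k}$. Both routes work, with three remarks on yours. First, the summands $(u-v)u^{p-1-k}v^{k}v_x$ are not literally of the form treated in (iii), since the $p-1$ middle factors are mixed; either rerun the proof of (iii) distributing \eqref{215} over each factor (as you indicate), or use $|u|^{p-1-k}|v|^{k}\le |u|^{p-1}+|v|^{p-1}$ to reduce to (iii) as stated. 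Second, in your splitting the residual bilinear term (present only when $p=1$) is $(u-v)v_x$, so the $T^{1/4}$ contribution carries $\|v\|_{\mathcal{B}_T}$ rather than the stated $\|u\|_{\mathcal{B}_T}$; since $C$ is generic, your final bound is the stated one with the roles of $u$ and $v$ exchanged in that single term, which is immaterial for the contraction argument on a common ball. Third, the Mean Value Theorem argument applies to any real $p\ge 1$, while your telescoping identity requires $p\in\N$; in the paper's polynomial setting this costs nothing.
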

\begin{proof}
Note that (i) follows using H\"{o}lder inequality.  
For (ii), observe that thanks to \eqref{215} and \eqref{e87},  we have
\begin{align*}
\|uw_x\|_{L^1(0,T;L^2(\R))} 
&\leq \int_0^T \|u(t)\|_{\infty}\|w_x(t)\|_2dt \leq C\int_0^T\|u(t)\|_{2}^{\frac{1}{2}\left(2-\frac{1}{j}\right)}\|\partial_x^ju(t)\|_{2}^{\frac{1}{2j}}\|w_x(t)\|_2dt \\
& \leq C\|u\|_{\mathcal{B}_T}^{\frac{1}{2}\left(2-\frac{1}{j}\right)}\int_0^T\|\partial_x^ju(t)\|_{2}^{\frac{1}{2j}}\|w(t)\|_2^{1-\frac{1}{j}}\|\partial_x^j w(t)\|_2^{\frac{1}{j}}dt \\
& \leq C\|u\|_{\mathcal{B}_T}^{\frac{1}{2}\left(2-\frac{1}{j}\right)}\|w\|_{\mathcal{B}_T}^{1-\frac{1}{j}}\int_0^T\|\partial_x^ju(t)\|_{2}^{\frac{1}{2j}}\partial_x^j \|w(t)\|_2^{\frac{1}{j}}dt \\
&\leq C\|u\|_{\mathcal{B}_T}^{\frac{1}{2}\left(2-\frac{1}{j}\right)}\|w\|_{\mathcal{B}_T}^{1-\frac{1}{j}}\left(\int_0^T\|\partial_x^ju(t)\|_{2}^{2}dt\right)^{\frac{1}{4j}}\left(\int_0^T\|\partial_x^j w(t)\|_{2}^{2}dt\right)^{\frac{1}{2j}}T^{\frac{4j-3}{4j}} \\
&\leq CT^{\frac{4j-3}{4j}}\|u\|_{\mathcal{B}_T}^{\frac{1}{2}\left(2-\frac{1}{j}\right)}\|w\|_{\mathcal{B}_T}^{1-\frac{1}{j}}\|u\|_{\mathcal{B}_T}^{\frac{1}{2j}}\|w\|_{\mathcal{B}_T}^{\frac{1}{j}}.
\end{align*}
Let us now prove (iii).  Note that \eqref{e87} implies that
\begin{align*}
&\|u|v|^{p-1}w_x\|_{L^1(0,T;L^2(\R))} \\
&\leq 2^{\frac{p}{2}}\int_0^T \|u(t)\|_{2}^{\frac{1}{2}}\|u_x(t)\|_{2}^{\frac{1}{2}}\|v(t)\|_{2}^{\frac{p-1}{2}}\|v_x(t)\|_{2}^{\frac{p-1}{2}}\|w_x(t)\|_2dt \\
&\leq 2^{\frac{p}{2}}C\|u\|_{\mathcal{B}_T}^{\frac{1}{2}}\|v\|_{\mathcal{B}_T}^{\frac{p-1}{2}}\int_0^T \|\partial_x^j u(t)\|_{2}^{\frac{1}{2j}}\|u(t)\|_{2}^{\frac{1}{2}\left( 1- \frac{1}{j}\right)}\|\partial_x^j v(t)\|_{2}^{\frac{p-1}{2j}}\|v(t)\|_{2}^{\frac{p-1}{2}\left( 1- \frac{1}{j}\right)}\|w_x(t)\|_{2}dt \\
&\leq 2^{\frac{p}{2}}C\|u\|_{\mathcal{B}_T}^{\frac{1}{2}}\|v\|_{\mathcal{B}_T}^{\frac{p-1}{2}} \|u\|_{\mathcal{B}_T}^{\frac{1}{2}\left( 1- \frac{1}{j}\right)} \|v\|_{\mathcal{B}_T}^{\frac{p-1}{2}\left( 1- \frac{1}{j}\right)} \int_0^T \|\partial_x^j u(t)\|_{2}^{\frac{1}{2j}}\|\partial_x^j v(t)\|_{2}^{\frac{p-1}{2j}}\|w_x(t)\|_{2}dt \\
&= 2^{\frac{p}{2}}C \|u\|_{\mathcal{B}_T}^{\frac{1}{2}\left( 2- \frac{1}{j}\right)} \|v\|_{\mathcal{B}_T}^{\frac{p-1}{2}\left( 2- \frac{1}{j}\right)} \int_0^T \|\partial_x^j u(t)\|_{2}^{\frac{1}{2j}}\|\partial_x^j v(t)\|_{2}^{\frac{p-1}{2j}}\|w_x(t)\|_{2}dt. 
\end{align*}
Thus,  H\"older's inequality ensures that
\begin{multline*}
\|u|v|^{p-1}w_x\|_{L^1(0,T;L^2(\R))} \\
\leq 2^{\frac{p}{2}} C \|u\|_{\mathcal{B}_T}^{\frac{1}{2}\left( 2- \frac{1}{j}\right)} \|v\|_{\mathcal{B}_T}^{\frac{p-1}{2}\left( 2- \frac{1}{j}\right)} \left(\int_0^T \|\partial_x^j u\|_{2}^{2}dt\right)^{\frac{1}{4j}}\left(\int_0^T\|\partial_x^j v\|_{2}^{2}dt\right)^{\frac{p-1}{4j}}\left(\int_0^T\|w_x\|_{2}^{2}dt\right)^{\frac{1}{2}}T^{\frac{2j-p}{4j}}.
\end{multline*}
The previous inequality and \eqref{e87} give us
\begin{align*}
&\|u|v|^{p-1}w_x\|_{L^1(0,T;L^2(\R))} \\
&\qquad\leq 2^{\frac{p}{2}}CT^{\frac{2j-p}{4j}} \|u\|_{\mathcal{B}_T}^{\frac{1}{2}\left( 2- \frac{1}{j}\right)} \|v\|_{\mathcal{B}_T}^{\frac{p-1}{2}\left( 2- \frac{1}{j}\right)}\|u\|_{\mathcal{B}_T}^{\frac{1}{2j}} \|v\|_{\mathcal{B}_T}^{\frac{p-1}{2j}}  \left(\int_0^T \| w\|_{2}^{2\left(1-\frac{1}{j}\right)}\| \partial_x^jw\|_{2}^{\frac{2}{j}}dt\right)^{\frac{1}{2}}\\
&\qquad\leq 2^{\frac{p}{2}} C T^{\frac{2j-p}{4j}} \|u\|_{\mathcal{B}_T} \|v\|_{\mathcal{B}_T}^{p-1} \|w\|_{\mathcal{B}_T}^{1-\frac{1}{j}}\left(\int_0^T \| \partial_x^jw\|_{2}^{2}dt\right)^{\frac{1}{2j}}T^{\frac{1}{2}\left(1-\frac{1}{j}\right)} \\
&\qquad\leq 2^{\frac{p}{2}} C T^{\frac{2j-p}{4j}} T^{\frac{1}{2}\left(1-\frac{1}{j}\right)}\|u\|_{\mathcal{B}_T} \|v\|_{\mathcal{B}_T}^{p-1} \|w\|_{\mathcal{B}_T}^{1-\frac{1}{j}}\|w\|_{\mathcal{B}_T}^{\frac{1}{j}},
\end{align*}
which allows us to get (iii).  Finally, using the mean value theorem,  (ii),  (iii), and Lemma \ref{lm1}, we have
\begin{align*}
&\|Mu -Mv \|_{L^1(0,T;L^2)}\\
&\qquad\leq C\|(1+|u|^{p-1}+|v|^{p-1})|u-v|u_x\|_{L^1(0,T;L^2)}+\|v^p(u-v)_x\|_{L^1(0,T;L^2)} \\
&\qquad\leq C\left\lbrace T^{\frac{4j-3}{4j}}\|u-v\|_{\mathcal{B}_T}\|u\|_{\mathcal{B}_T} +2^{\frac{p}{2}}T^{\frac{2j-p}{4j}}T^{\frac{1}{2}\left(1-\frac{1}{j}\right)}\|u-v\|_{\mathcal{B}_T}\|u\|_{\mathcal{B}_T}^p\right. \\
&\qquad\qquad\left.+2^{\frac{p}{2}}T^{\frac{2j-p}{4j}}T^{\frac{1}{2}\left(1-\frac{1}{j}\right)}\|u-v\|_{\mathcal{B}_T}\|u\|_{\mathcal{B}_T}\|v\|_{\mathcal{B}_T}^{p-1}
+2^{\frac{p}{2}}T^{\frac{2j-p}{4j}}T^{\frac{1}{2}\left(1-\frac{1}{j}\right)}\|u-v\|_{\mathcal{B}_T}\|v\|_{\mathcal{B}_T}^{p} \right\rbrace,
\end{align*}
and (iv) holds.
\end{proof}

\subsection{Nonlinear system} We are in a position to consider the nonlinear model \eqref{11}, with $u_0\in C([-\tau, 0]; L^2(\RR)).$ Before presenting it, let us introduce the following definitions: 
\begin{Definition}
A mild solution of \eqref{11} is a function $u\in {\mathcal B}_T,$ $T>0,$ which satisfies
\begin{align*}
u(t)&=S(t) u_0(0) -\int_0^t S(t-s)\lambda u(s-\tau )ds -\int_0^t S(t-s) u^p(s)\partial_x u(s) ds,\quad t\in [0,T].
\end{align*}
A global mild solution of \eqref{11} is a function $u:[0,\infty)\rightarrow H^1(\RR)$
whose restriction to every bounded interval $[0,T]$ is a mild solution of \eqref{11}.
\end{Definition}

\subsubsection{\textbf{Well-posedness theory in $L^2({\R})$\label{s3}}}
With these definitions and the previous estimates in hand, the following result gives the local well-posedness for the higher-order dispersive equation and an a priori estimate for the solutions of \eqref{11}.
\begin{Proposition}\label{r1}
Let  $1\leq p<2j$ with $j\geq 1$, and $\lambda_0, \lambda \in L^{\infty}(\R)$.  For $u_0 \in L^2(\R)$,  there exist $T>0$ and a unique mild solution $u \in \mathcal{B}_{T}$ of \eqref{11}, such that
\begin{equation*}
\Vert u\Vert_{{\mathcal B}_T}\le C_T \left \{ \Vert u(0)\Vert_2 +
  \Vert \lambda\Vert_\infty  \Vert u_0\Vert_{L^1(-\tau, 0; L^2(\RR))} + \Vert \lambda\Vert_\infty^{1/2}  
 \Vert u_0\Vert_{L^2(-\tau, 0; L^2(\RR))}  \right \}.
\end{equation*}
Here, $C_T$ is given by \eqref{36}.
\end{Proposition}

\begin{proof}
Let $T>0$ be determined later and consider the  operator $\Gamma:\mathcal{B}_{T}\longrightarrow \mathcal{B}_{T}$ given by $\Gamma (u) = v$. For each $u \in \mathcal{B}_{T}$ consider the problem
\begin{equation}\label{e9}
\begin{cases}
v_t = A_{\lambda_0}v -\lambda v(x,t-\tau )- Mu, &\qtq{in}\RR\times (0,T),\\
v(x,s)=u_0(x,s),&\qtq{in}\RR\times [-\tau, 0].
\end{cases}
\end{equation} 
Note that $A_{\lambda_0}$ generates a strongly continuous semigroup $\{S(t)\}_{t\geq 0}$ of contractions in $L^2(\R)$. Moreover, Proposition \ref{p32} allows us to conclude that (\ref{e9}) has a unique mild solution $v \in B_{0,T}$, such that
\begin{equation}\label{e10}
\begin{split}
\Vert v\Vert_{{\mathcal B}_T}
\le &C_T \left \{ \Vert u_0\Vert_2+\Vert Mu\Vert_{L^1(0,T;L^2(\RR))}\right.\\ &\left.+
  \Vert \lambda\Vert_\infty  \Vert u_0\Vert_{L^1(-\tau, 0; L^2(\RR))} + \Vert \lambda\Vert_\infty^{1/2}  
 \Vert u_0\Vert_{L^2(-\tau, 0; L^2(\RR))}  \right \}
 \end{split}
\end{equation}
where $C_T$ is given by \eqref{36}.
Moreover, we have 
\begin{align*}
\begin{split}
\frac 12 \Vert v(t)\Vert_2^2+\int_0^t\Vert\partial^j_xv\Vert^2_2 ds&+\int_0^t\int_\RR \lambda_0 v^2(x,s)dx ds
+\int_0^t\int_\RR\lambda v(x, s-\tau )v(x, s)dxds \\
&=\frac 12 \Vert u_0\Vert_2^2+\int_0^t\int_\RR Mu(x,s) v(x,s) dxds.
\end{split}
\end{align*}
The idea now is to prove that the operator $\Gamma:\mathcal{B}_{T}\longrightarrow \mathcal{B}_{T}$ is a contraction mapping. To see it, first, note that, thanks to the  Lemma \ref{lm1} and \eqref{e10}, we have
\begin{equation*}
\|\Gamma u \|_{\mathcal{B}_T}\leq C_T \{\|u_0\|_2+  2^{p/2}CT^{\frac{2j-p}{4j}}\|u\|_{\mathcal{B}_T}^{p+1}+ 
  \Vert \lambda\Vert_\infty  \Vert u_0\Vert_{L^1(-\tau, 0; L^2(\RR))} + \Vert \lambda\Vert_\infty^{1/2}  
 \Vert u_0\Vert_{L^2(-\tau, 0; L^2(\RR))} \},
\end{equation*}
and, for $u \in B_R(0):=\{u \in \mathcal{B}_{T}: \|u\|_{\mathcal{B}_{T}}\leq R\}$, it follows that
\begin{equation*}
\|\Gamma u \|_{\mathcal{B}_T}\leq C_T \{\|u_0\|_2+  2^{p/2}CT^{\frac{2j-p}{4j}}R^{p+1}+
  \Vert \lambda\Vert_\infty  \Vert u_0\Vert_{L^1(-\tau, 0; L^2(\RR))} + \Vert \lambda\Vert_\infty^{1/2}  
 \Vert u_0\Vert_{L^2(-\tau, 0; L^2(\RR))} \}.
\end{equation*}
Choosing $$R=2C_T\left(\|u_0\|_2 + \left (
  \Vert \lambda\Vert_\infty  \Vert u_0\Vert_{L^1(-\tau, 0; L^2(\RR))} + \Vert\lambda\Vert_\infty^{1/2}   \Vert u_0\Vert_{L^2(-\tau, 0; L^2(\RR))}  \right )\Vert u_0\Vert_{L^2(-\tau, 0; L^2(\RR))}\right),$$ we obtain the following estimate
\begin{equation*}
\|\Gamma u \|_{0,T}\leq \left(K_1+\frac{1}{2}\right)R,
\end{equation*}
where $K_1=K_1(T)= 2^{p/2}C_TCT^{\frac{2j-p}{4j}}R^{p}.$ On the other hand, note that $\Gamma u - \Gamma w$ is solutions of
\begin{equation*}
\begin{cases}
v_t = A_{\lambda_0}v -\lambda v (t-\tau)- (Mu-Mv), &\qtq{in}\RR\times (0,T),\\
v(x,s)=0,&\qtq{in}\RR\times [-\tau, 0].
\end{cases}
\end{equation*}

We will now prove that $\Gamma$ has a unique fixed point. To do that, note that thanks to Proposition \ref{p32}, we have
\begin{align*}
\|\Gamma u -\Gamma w \|_{\mathcal{B}_T}&\leq C_T\|Mu-Mw\|_{L^1(0,T;L^2)}.
\end{align*}
Lemma \ref{lm2}, precisely estimate (iv),  allows us to conclude that
\begin{align*}
\|\Gamma u -\Gamma w \|_{\mathcal{B}_T}\leq C_TC \left\lbrace 2^{\frac{p}{2}}T^{\frac{2j-p}{4j}}\left(\|u\|_{\mathcal{B}_T}^p+\|u\|_{\mathcal{B}_T}\|w\|_{\mathcal{B}_T}^{p-1} +\|w\|_{\mathcal{B}_T}^{p}\right) +2^{\frac{1}{2}}T^{\frac{1}{4}}\|u\|_{\mathcal{B}_T}\right\rbrace\|u-w\|_{\mathcal{B}_T}.
\end{align*}
Suppose that $u,w \in B_R(0)$ defined above. Then,
\begin{align*}
\|\Gamma u -\Gamma w \|_{\mathcal{B}_T}&\leq K_2 \|u-w\|_{\mathcal{B}_T},
\end{align*}
where $$K_2=K_2(T)= C_TC\{2^{\frac{1}{2}}T^{\frac{1}{4}}R +3(2^{\frac{p}{2}})T^{\frac{2j-p}{4j}}R^{p}\}.$$ Since  $K_1\leq K_2$, we can  choose $T>0$ to obtain $K_2<\frac{1}{2}$,
$$\|\Gamma u\|_{\mathcal{B}_T}\leq R\quad \text{and}\quad \|\Gamma u-\Gamma w\|_{\mathcal{B}_T}< \frac{1}{2}\|u-w\|_{\mathcal{B}_T},$$
for all $u,w \in B_R(0) \subset B_{0,T}$.  Hence, $\Gamma: B_R(0) \rightarrow B_R(0)$ is a contraction and, by Banach fixed point theorem, we obtain a unique $u\in B_R(0)$, such that $\Gamma (u)=u$ and consequently,  the local well-posedness result for $0<T\le \tau$ small enough to the system.  Thus, $u$ is a unique local mild solution to the problem, and the estimate \eqref{r1} holds. 
\end{proof}

 We are now able to present the main result of this subsection.
\begin{Theorem}\label{t33}
Consider $\lambda_0, \lambda\in L^\infty (\RR)$ satisfying the hypotheses of Theorem \ref{t36} (or Theorem \ref{t42}). Then the system \eqref{11} admits a unique global mild solution for every initial datum $u_0\in C([-\tau,0]; L^2(\RR))$ satisfying 
\begin{equation}\label{39}
\begin{split}
\frac 12 \Vert u(0)\Vert_2^2=&\frac 12 \Vert u(t)\Vert_2^2+\int_0^t\Vert u_x\Vert^2_2\,  ds+\int_0^t\int_\RR \lambda_0 u^2(x,s)dxds\\&+\int_0^t\int_\RR\lambda u(x, s-\tau )u(x, s)dxds 
\end{split}
\end{equation}
for all $t\ge 0$. Moreover,  there exists a non-decreasing continuous function $\beta_0 : \R_+ \rightarrow \R_+$,  such that the solution $u$ \eqref{11} satisfies
\begin{equation*}
\Vert u\Vert_{{\mathcal B}_T}\le C_T \left \{ \Vert u(0)\Vert_2+  \Vert \lambda\Vert_\infty  \Vert u_0\Vert_{L^1(-\tau, 0; L^2(\RR))} + \Vert \lambda\Vert_\infty^{1/2}  
 \Vert u_0\Vert_{L^2(-\tau, 0; L^2(\RR))}  \right \}
\end{equation*}
and
\begin{equation*}
\|u\|_{\B_T} \leq \beta_0(\|u_0\|_2)\|u_0\|_{C([-\tau,0]; L^2(\RR))},
\end{equation*}
with $\beta_0= C_T\left(1 + \left ( \Vert \lambda\Vert_\infty \tau^{1/2} + \Vert \lambda\Vert_\infty^{1/2} \right )\right)$  and $C_T$ given by \eqref{36}. 
\end{Theorem}

\begin{proof}To prove the global well-posedness, we need to prove that the norms of the solutions of \eqref{11} remain bounded in the existence time interval. To do that, let us consider the functional \begin{equation}\label{25}
{\mathcal E}(t):= {\mathcal E}(u(t))=\frac 12\int_\RR u^2 (x,t) dx+\frac 12\int_{t-\tau }^t\int_\RR e^{-(t-s)}\vert \lambda (x)\vert u^2 (x,s)\ dx\  ds.
\end{equation}
Taking the time derivative of this function, we have that 
\begin{align*}
\begin{split}
\frac {d{\mathcal E}} {dt} (t)
=&\int_\RR u(t)((-1)^j\partial^{2j}_xu(t)-\lambda_0 u(t)-\lambda u(t-\tau )+u^p(t)u_x(t)) dx+\frac 12\int_\RR \vert \lambda\vert u^2(t) dx
\\&- \frac 12 e^{-\tau}\int_\RR \vert \lambda\vert u^2(t-\tau ) dx -\frac 12 \int_{t-\tau}^t \int_\RR e^{-(t-s)}\vert \lambda\vert u^2(x, s) dx ds.
\end{split}
\end{align*}
Integrating by parts, using the Young inequality and taking into account the hypotheses  \eqref{15},  \eqref{27}, and \eqref{28}, we get that
\begin{align*}
\begin{split}
\frac {d{\mathcal E}} {dt} (t)
\le& -\int_\RR (\partial^j_xu)^2(t) dx -\gamma_0 \int_\RR u^2(t) dx +\frac {e^\tau +1} 2 \int_\RR \vert \lambda(x)\vert  u^2 (t) dx\\
& -\frac 12 \int_{t-\tau}^t \int_\RR e^{-(t-s)}\vert \lambda\vert u^2(x, s) dx ds\\
\le&
-\int_\RR (\partial^j_xu)(t) dx - (\gamma_0 -\gamma ) \int_\RR u^2(t) dx +
\int_\RR \beta (x) u^2 (t) dx.
\end{split}
\end{align*}
Now,  the Hölder inequality ensures that
\begin{equation}\label{212}
\frac {d{\mathcal E}} {dt} (t)\le-\Vert \partial^j_xu(t)\Vert_2^2 -(\gamma_0 -\gamma)\Vert u(t)\Vert_2^2
+\Vert \beta\Vert_q \Vert u\Vert_{2q'}^2
\end{equation}
with $q'=\frac q {q-1}.$ Observing that
\begin{equation}\label{213}
\Vert u\Vert_{2{q'}}^2=\left (\int_\RR (u(t))^{2{q'}} dx\right )^{\frac {1}{q'}}= \left (\int_\RR u^2(t) (u(t))^{\frac 2 {q-1}} dx\right )^{\frac {1}{ q'}}
\le \Vert u\Vert_2^{\frac 2 {q'}}\Vert u\Vert_\infty^{\frac 2 {{q'}(q-1)}}= \Vert u\Vert_2^{\frac 2 {q'}}\Vert u\Vert_\infty^{\frac 2 q}
\end{equation}
and using \eqref{213} in \eqref{212}, yields  
\begin{align}\label{214a}
\begin{split}
\frac {d{\mathcal E}} {dt} (t)\le&-\Vert \partial^j_xu(t)\Vert_2^2 -(\gamma_0 -\gamma )\Vert u(t)\Vert_2^2+\Vert \beta\Vert_q \Vert u(t)\Vert_2^{\frac 2 {q'}} \Vert u(t)\Vert_{\infty}^{\frac 2 q}.
\end{split}
\end{align}
Using \eqref{215} and \eqref{e87} and applying Young's inequality, we deduce from \eqref{214a} for every fixed $\delta >0$ the following estimates:
\begin{equation*}
\begin{split}
\frac {d{\mathcal E}} {dt} (t)
\le&
-\Vert \partial^j_xu(t)\Vert_2^2 -(\gamma_0 -\gamma)\Vert u(t)\Vert_2^2+2^{1/q} 
\Vert \beta \Vert_q\Vert u\Vert_2^{\frac {2q-1} q} \Vert u_x\Vert_2^{\frac 1 q} \\
\le&
-\Vert \partial^j_xu(t)\Vert_2^2 -(\gamma_0 -\gamma)\Vert u(t)\Vert_2^2+2^{\frac 1 q}C^{\frac 1 q} 
\Vert \beta \Vert_q\Vert u\Vert_2^{\frac {2qj-1}{qj}} \Vert \partial_x^j u\Vert_2^{\frac{1}{qj}} \\
\le& -\Vert \partial^j_xu(t)\Vert_2^2 -(\gamma_0 -\gamma)\Vert u(t)\Vert_2^2
+\frac {
\Big ( \frac {1}{\delta } \Vert \beta \Vert_q\Vert u\Vert_2^{\frac {2qj-1}{qj}}  \Big )^\frac {2qj}{2qj-1}}{\frac {2qj}{2qj-1}} + \frac {
 \left ( \delta 2^{\frac 1 q}C^{\frac 1 q}\Vert \partial_x^j u\Vert_{2}^{\frac {1}{qj}}     \right )^{2qj}}{2qj}.
 \end{split}
\end{equation*}
Picking $\delta>0$ such that $2^{2j}C^{2j}\delta^{2qj}=2qj,$ this gives
\begin{align*}
\frac {d{\mathcal E}} {dt} (t)\le& - \Big (
\gamma_0 -\gamma  -\frac {2qj-1}{2qj} \Big (\frac {2^{2j-1}C^{2j}} {qj} \Big )^{\frac 1 {2qj-1}}
\Vert \beta \Vert_q^{\frac {2qj}{2qj-1}}
\Big ) \Vert u(t)\Vert_2^2\leq0.
\end{align*}
This inequality ensures that $\Vert u(t)\Vert_2$ remains bounded for $t\in [0, T].$

Finally,  thanks to the estimate \eqref{39}, we deduce that $\Vert u\Vert_{\mathcal{B}_T}$ remains bounded  for $t\in [0, T]$, and so,  the local solution $u$ given by Proposition \ref{r1} can  be extended on $[0,\tau].$ Now,  once we have a solution $u\in {\mathcal B}_\tau$, we can apply the same arguments as we did on Theorem  \ref{t22} to prove the existence of a global mild solution of \eqref{11}. Finally, note that the proof of  Proposition \ref{r1} guarantees the  function $\beta_0$ is given by 
\begin{align*}
\beta_0(s)= C_T\left(1 + \left ( \Vert \lambda\Vert_\infty \tau^{1/2} + \Vert \lambda\Vert_\infty^{1/2} \right )\right),
\end{align*}
completing the proof.
\end{proof}
\begin{Remark}
The previous results are still valid when $\lambda(x)=0$. All the results of this subsection will be used several times in the next one, considering $\lambda(x)=0$.
\end{Remark}
\subsubsection{\textbf{Well-posedness theory in $H^{2j+1}(\R)$}}We will analyze the well-posedness in $\B_{2j+1,T}$, with $1\leq p <2j$ with $j\geq 1$.  To do that,  let us first consider the following linearized problem:
\begin{equation}\label{ee105}
\begin{cases}
v_t +(-1)^{j+1}\partial_x^{2j+1}v+(-1)^m\partial_x^{2m} v+\partial_x(u^pv)\\\hspace{4cm} +\lambda_0(x) v + \lambda (x) v(x, t-\tau)=0,&\qtq{in}\RR\times (0,\infty),\\
v(x,s)=v_0(x,s),&\qtq{in}\RR\times [-\tau, 0],
\end{cases}
\end{equation}
when  $\lambda (x)\neq0 $ and 
\begin{equation}\label{ee105aa}
\begin{cases}
v_t +(-1)^{j+1}\partial_x^{2j+1}v+(-1)^m\partial_x^{2m} v+\partial_x(u^pv) +\lambda_0(x) v=0,&\qtq{in}\RR\times (0,\infty),\\
v(x,0)=v_0(x),&\qtq{in}\RR,
\end{cases}
\end{equation}
when  $\lambda (x)=0 $.
We can establish the following proposition:

\begin{Proposition}\label{prop4}
For $T>0$, $\lambda_0, \lambda \in L^{\infty}(\R)$, and $u \in \B_{T}$, if $v_0 \in C([-\tau,0];L^2(\R))$, then the system \eqref{ee105} admits a unique solution $v \in \mathcal{B}_{0,T}$, such that 
\begin{equation}\label{106_1}
\Vert v(t)\Vert_
{C([-\tau,T];L^2(\RR))}\le
C(\|\lambda\|_{\infty},T,\tau)
\left ( \Vert v_0\Vert_{C([-\tau,0];L^2(\RR))}+(p+1)\Vert u\Vert_{\B_T}^p\Vert v\Vert_{\B_T}\right ),
\end{equation}
where $C(\|\lambda\|_{\infty},T,\tau)$ is a positive constant and
\begin{equation}\label{106_2}
\Vert v\Vert_{{\mathcal B}_T}\le  C_T \left \{ \Vert v(0)\Vert_2+\Vert u\Vert_{\B_T}^p+
\Vert \lambda\Vert_\infty\Vert v_0\Vert_{L^1(-\tau, 0; L^2(\RR))}    + \Vert \lambda\Vert_\infty^{1/2} \Vert v_0\Vert_{L^2(-\tau, 0; L^2(\RR))}  \right \}
\end{equation}
and
\begin{equation}\label{e106}
\|v\|_{\B_T} \leq \sigma(\|u\|_{\B_T},T)\left \{ \Vert v(0)\Vert_2+
\Vert \lambda\Vert_\infty\Vert v_0\Vert_{L^1(-\tau, 0; L^2(\RR))}    + \Vert \lambda\Vert_\infty^{1/2} \Vert v_0\Vert_{L^2(-\tau, 0; L^2(\RR))}  \right \}.
\end{equation}
Here,  $1\leq p<2j$, with $j\geq 1$, and $\sigma: \R^+ \times \R^+ \rightarrow \R^+$ is a non-decreasing  continuous function. 
\end{Proposition}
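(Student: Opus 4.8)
The plan is to treat \eqref{ee105} as a linear inhomogeneous problem for $v$ driven by the semigroup $\{S(t)\}_{t\ge0}$ generated by $A_{\lambda_0}$, exactly as in \eqref{32}, with the variable-coefficient term $\partial_x(u^pv)$ regarded as a forcing that happens to depend linearly on $v$. First I would write the Duhamel representation, as in \eqref{33},
\[
v(t)=S(t)v_0(0)-\int_0^tS(t-s)\lambda v(s-\tau)\,ds-\int_0^tS(t-s)\partial_x\big(u^p(s)v(s)\big)\,ds,
\]
and expand the last integrand as $\partial_x(u^pv)=p\,u^{p-1}u_xv+u^pv_x$. The role of Lemmas \ref{lm1} and \ref{lm2} is then to control this forcing in $L^1(0,T;L^2(\R))$: applying Lemma \ref{lm1} to $u^pv_x$, and estimate (iii) of Lemma \ref{lm2} (with the three slots permuted so that $v$ plays the role of the first factor, $|u|^{p-1}$ the second, and $u_x$ the derivative) to $v\,|u|^{p-1}u_x$, yields
\[
\big\Vert\partial_x(u^pv)\big\Vert_{L^1(0,T;L^2(\R))}\le (p+1)\,2^{\frac p2}C\,T^{\frac{2j-p}{4j}}\Vert u\Vert_{\B_T}^p\,\Vert v\Vert_{\B_T},
\]
where the gain $T^{\frac{2j-p}{4j}}$ with \emph{positive} exponent is available precisely because $p<2j$.

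For existence and uniqueness I would run a contraction argument on $\B_{0,T}$. The key structural observation is that the delay term $\lambda v(\cdot,s-\tau)$ only refers to already known data on each time slab of length $\le\tau$ — namely $v_0$ on $[0,\tau]$ and the previously constructed solution afterwards, exactly as in the bootstrap of Theorem \ref{t22} — so on such a slab the genuine $v$-coupling enters only through $\partial_x(u^pv)$. Defining $\Phi(v)$ by the right-hand side of the Duhamel formula with the delay term frozen, Proposition \ref{p32} together with the forcing estimate above gives
\[
\Vert\Phi(v_1)-\Phi(v_2)\Vert_{\B_T}\le C_T(p+1)2^{\frac p2}C\,T^{\frac{2j-p}{4j}}\Vert u\Vert_{\B_T}^p\,\Vert v_1-v_2\Vert_{\B_T}.
\]
Choosing the slab length $\delta=\min\{\tau,T_\ast\}$ with $T_\ast$ so small that this constant is below $\tfrac12$ produces a unique fixed point on each slab; patching the finitely many slabs covering $[0,T]$ as in Theorem \ref{t22} yields the unique solution $v\in\B_{0,T}$ on the whole interval.

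The quantitative bounds follow by feeding the same forcing estimate into the a priori inequalities of Proposition \ref{p32}. Inserting $f=-\partial_x(u^pv)$ into \eqref{35_1} gives \eqref{106_1} (the factor $p+1$ being exactly the one produced by the expansion of $\partial_x(u^pv)$), while \eqref{35} gives an inequality of the form
\[
\Vert v\Vert_{\B_T}\le C_T\Big\{\Vert v(0)\Vert_2+(p+1)2^{\frac p2}C\,T^{\frac{2j-p}{4j}}\Vert u\Vert_{\B_T}^p\Vert v\Vert_{\B_T}+\Vert\lambda\Vert_\infty\Vert v_0\Vert_{L^1(-\tau,0;L^2(\R))}+\Vert\lambda\Vert_\infty^{1/2}\Vert v_0\Vert_{L^2(-\tau,0;L^2(\R))}\Big\},
\]
which is \eqref{106_2}. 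On a slab short enough that $C_T(p+1)2^{\frac p2}C\,T^{\frac{2j-p}{4j}}\Vert u\Vert_{\B_T}^p\le\tfrac12$, the term carrying $\Vert v\Vert_{\B_T}$ is absorbed into the left-hand side; iterating this absorbed estimate over the finitely many slabs covering $[0,T]$ and collecting the compounded constants into a single nondecreasing continuous function $\sigma(\Vert u\Vert_{\B_T},T)$ yields \eqref{e106}, with $C_T$ given by \eqref{36}.

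The only substantive point in the argument is the control of the variable-coefficient term $\partial_x(u^pv)$ in $L^1(0,T;L^2(\R))$ by a constant carrying a positive power of $T$; everything else is the linear semigroup machinery already assembled in Propositions \ref{p31}--\ref{p32} and the delay-stepping scheme of Theorem \ref{t22}. I expect the main obstacle to be purely bookkeeping: absorbing the $\Vert v\Vert_{\B_T}$ factor uniformly across slabs and tracking how the slab constants compound — so as to exhibit a single continuous, nondecreasing $\sigma$ valid on the full interval $[0,T]$ rather than only for small times — is delicate, though routine.
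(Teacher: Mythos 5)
Your proposal is correct, and on the decisive point — the bound \eqref{e106} — it takes a genuinely different route from the paper. The parts that coincide: the paper also omits existence/uniqueness by referring back to Proposition \ref{r1} and Theorem \ref{t33} (your slab contraction with the delay term frozen is precisely that omitted argument, in the spirit of Theorem \ref{t22}), and the paper also obtains \eqref{106_1} and the raw form of \eqref{106_2} by inserting $f=-\partial_x(u^pv)$ into \eqref{35_1} and \eqref{35} and invoking Lemma \ref{lm2}; your permutation of the three slots in Lemma \ref{lm2}(iii) is legitimate, since its proof only uses $\Vert\cdot\Vert_\infty$-interpolation and H\"older in time symmetrically in the factors. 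The divergence is in how the $\Vert v\Vert_{\B_T}$ term is removed to reach \eqref{e106}. The paper never absorbs in the $\B_T$-norm: it returns to the energy inequality \eqref{help}, estimates the quadratic term $\Vert u^pv_xv\Vert_{L^1(0,t;L^2(\RR))}$ by Gagliardo--Nirenberg as $\Vert u\Vert_{\B_T}^p\int_0^t\Vert v(s)\Vert_2^{(4j-3)/(2j)}\Vert\partial_x^jv(s)\Vert_2^{3/(2j)}ds$, and then applies Young's inequality with a weight $\eta$ calibrated to $\Vert u\Vert_{\B_T}^p$ so that the $\int_0^t\Vert\partial_x^jv\Vert_2^2\,ds$ contribution is absorbed into the left-hand side; a single Gronwall argument on all of $[0,T]$ then yields \eqref{e106} with an explicit $\sigma(\Vert u\Vert_{\B_T},T)\sim\bigl(1+e^{2\Vert\lambda\Vert_\infty T}\bigr)^{1/2}e^{\rho(\Vert u\Vert_{\B_T})T/2}$. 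Your route instead exploits the positive power $T^{(2j-p)/(4j)}$ from Lemmas \ref{lm1}--\ref{lm2} to absorb $\Vert v\Vert_{\B_T}$ on short slabs and compounds constants over roughly $T/T_*$ slabs. Both are valid: the paper's method buys a one-pass, explicit formula for $\sigma$ and no slab bookkeeping; yours buys uniformity of machinery (the same forcing estimate drives existence, \eqref{106_1}, \eqref{106_2} and \eqref{e106}). One point you should make explicit in the iteration: on the slab $[kT_*,(k+1)T_*]$ the ``data'' terms include $\Vert v\Vert_{L^1(kT_*-\tau,kT_*;L^2(\RR))}$ and $\Vert v\Vert_{L^2(kT_*-\tau,kT_*;L^2(\RR))}$, which refer to \emph{earlier slabs} rather than to $v_0$, so the recursion runs over the maximum of the preceding slab norms (still geometric), and the integer slab count should be replaced by a continuous majorant such as $T/T_*(\Vert u\Vert_{\B_T})+1$ so that the compounded constant is indeed a continuous nondecreasing function $\sigma$.
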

\begin{proof} 
The existence of a solution follows the same steps as done in Proposition \ref{r1} and Theorem \ref{t33}, so we omit it.  Let us prove the inequality \eqref{e106}.  Thanks to \eqref{35_1} and \eqref{35}, we deduce \eqref{106_1} and 
\begin{equation*}
\begin{split}
\Vert v\Vert_{{\mathcal B}_T}\le& C_T \left \{ \Vert v(0)\Vert_2+\Vert (u^pv)_x\Vert_{L^1(0,T;L^2(\RR))}\right.\\&\left.+
\Vert \lambda\Vert_\infty\Vert v_0\Vert_{L^1(-\tau, 0; L^2(\RR))}    + \Vert \lambda\Vert_\infty^{1/2} \Vert v_0\Vert_{L^2(-\tau, 0; L^2(\RR))}  \right \}.
\end{split}
\end{equation*}
Lemma \ref{lm2}   and Young's inequality imply that there exists a constant $C>0$ such that
\begin{equation*}
\Vert v(t)\Vert_
{C([-\tau,T];L^2(\RR))}\le
e^{\Vert\lambda\Vert_\infty T}
\left ( \Vert v(0)\Vert_2+(p+1)\Vert u\Vert_{\B_T}^p\Vert v\Vert_{\B_T}\right )
\end{equation*}
and
\begin{equation*}
\Vert v\Vert_{{\mathcal B}_T}\le C C_T \left \{ \Vert v(0)\Vert_2+\Vert u\Vert_{\B_T}^p+
\Vert \lambda\Vert_\infty\Vert v_0\Vert_{L^1(-\tau, 0; L^2(\RR))}    + \Vert \lambda\Vert_\infty^{1/2} \Vert v_0\Vert_{L^2(-\tau, 0; L^2(\RR))}  \right \},
\end{equation*}
giving \eqref{106_2}. On the other hand, by multiplying \eqref{ee105} by $v$, using \eqref{help} and following the same ideas as in the proof of Proposition \ref{p32}, we infer that
\begin{align*}
\begin{split}
\Vert v(t)\Vert_2^2 +2\int_0^t \Vert \partial^j_xv\Vert_2^2ds\le &2( \Vert \lambda\Vert_\infty +\Vert \lambda_0\Vert_\infty)
\int_0^t\Vert v\Vert^2_2ds\\
&+\Big (1+e^{2\Vert \lambda\Vert_\infty T}\Big )
\Big \{ \Vert v(0)\Vert_2+\Vert \partial_x (u^p v) v \Vert_{L^1(0,t;L^2(\RR))}  \\
  &+ 
  \Vert \lambda\Vert_\infty  \Vert v\Vert_{L^1(-\tau, 0; L^2(\RR))} + \Vert \lambda\Vert_\infty^{1/2}  
 \Vert v\Vert_{L^2(-\tau, 0; L^2(\RR))}  \Big \}^2.
\end{split}
\end{align*}
Using the Gagliardo--Nirenberg inequality \eqref{e87} and  integrating by parts, we obtain that
\begin{align*}
\Vert \partial_x (u^p v) v\Vert_{L^1(0,t;L^2(\RR))} &= \Vert (u^{p} v) v_x\Vert_{L^1(0,t;L^2(\RR))}  \leq \int_0^t \| u^{p}(s) v_x(s) v(s)\|_2ds \\
& \leq \|u\|_{\B_T}^p \int_0^t \| v_x(s)\|_{2}\| v(s)\|_{\infty}ds   \leq 2^{\frac12}\|u\|_{\B_T}^p \int_0^t \| v_x(s)\|^{\frac32}_2\| v(s)\|_{2}^{\frac{1}{2}}ds \\
& \leq 2^{\frac12}C\|u\|_{\B_T}^p \int_0^t \| v(s)\|^{\frac32 \left( 1 -\frac{1}{j}\right)}_2\|\partial_x^j v(s)\|^{\frac{3}{2j}}_2 \| v(s)\|_{2}^{\frac{1}{2}}ds  \\
& =2^{\frac12}C\|u\|_{\B_T}^p \int_0^t \| v(s)\|^{\frac{4j-3}{2j} }_2\|\partial_x^j v(s)\|^{\frac{3}{2j}}_2 ds .
\end{align*} 
Furthermore, by Young's inequality, we have
\begin{align*}
\int_0^t \| v(s)\|^{\frac{4j-3}{2j} }_2\|\partial_x^j v(s)\|^{\frac{3}{2j}}_2 ds  \leq \frac{(4j-3)}{4j \eta^{\frac{4j}{4j - 3}}}  \int_0^t \| v(s)\|^2_2ds + \frac{3 \eta^{\frac{4j}{3}}}{4j} \int_0^t\|\partial_x^j v(s)\|^2_2 ds  
\end{align*}
with $\eta=\left[\frac{4j}{3\sqrt{2}C (1+e^{2\|\lambda\|_{\infty}T})\|u\|^p_{\B_T}}\right]^{\frac{3}{4j}}$. Hence, we obtain that
\begin{align*}
\begin{split}
\Vert v(t)\Vert_2^2 +\int_0^t \Vert \partial^j_xv\Vert_2^2ds\le& \rho(\|u\|_{\B_T})
\int_0^t\Vert v(s)\Vert^2_2ds +\Big (1+e^{2\Vert \lambda\Vert_\infty T}\Big )
\Big \{ \Vert v(0)\Vert_2 \\
  &+ 
  \Vert \lambda\Vert_\infty  \Vert v\Vert_{L^1(-\tau, 0; L^2(\RR))} + \Vert \lambda\Vert_\infty^{1/2}  
 \Vert v\Vert_{L^2(-\tau, 0; L^2(\RR))}  \Big \}^2
\end{split}
\end{align*}
with 
\begin{equation*}
\begin{split}
\rho(\|u\|_{\B_T}) =&2( \Vert \lambda\Vert_\infty +\Vert \lambda_0\Vert_\infty)\\&+   (1+e^{2\Vert \lambda\Vert_\infty T})2^{\frac12}C\|u\|_{\B_T}^p \frac{(4j-3)}{4j }\left[\frac{4j}{3\sqrt{2}C (1+e^{2\|\lambda\|_{\infty}T})\|u\|^p_{\B_T}}\right]^{-\frac{3}{4j-3}} \\
=&2( \Vert \lambda\Vert_\infty +\Vert \lambda_0\Vert_\infty)\\&+ (4j-3)\left[ \frac{3\sqrt{2}C (1+e^{2\|\lambda\|_{\infty}T})\|u\|^p_{\B_T}}{4j}\right]^{\frac{4j}{4j-3}}.
\end{split}
\end{equation*}
Finally, we have  
\begin{align*}
\begin{split}
\Vert v(t)\Vert_2^2 +\int_0^t \Vert \partial^j_xv\Vert_2^2ds\le& \rho(\|u\|_{\B_T})
\int_0^t \left( \Vert v(s) \Vert^2_2 +  \int_0^s \Vert \partial^j_xv(t)\Vert_2^2dr \right) ds \\
&+\Big (1+e^{2\Vert \lambda\Vert_\infty T}\Big )
\Big \{ \Vert v(0)\Vert_2+ 
  \Vert \lambda\Vert_\infty  \Vert v\Vert_{L^1(-\tau, 0; L^2(\RR))}  \\
  & + \Vert \lambda\Vert_\infty^{1/2}  
 \Vert v\Vert_{L^2(-\tau, 0; L^2(\RR))}  \Big \}^2.
\end{split}
\end{align*}
Employing Gronwall’s inequality, we conclude that
\begin{align*}
\begin{split}
\Vert v(t)\Vert_2^2 +\int_0^t \Vert \partial^j_xv\Vert_2^2ds\le &\Big (1+e^{2\Vert \lambda\Vert_\infty T}\Big )e^{\rho(\|u\|_{\B_T}) t}
\Big \{ \Vert v(0)\Vert_2+ 
  \Vert \lambda\Vert_\infty  \Vert v\Vert_{L^1(-\tau, 0; L^2(\RR))}  \\
  & + \Vert \lambda\Vert_\infty^{1/2}  
 \Vert v\Vert_{L^2(-\tau, 0; L^2(\RR))}  \Big \}^2.
\end{split}
\end{align*}
Estimate \eqref{e106} follows directly from the above inequality.
\end{proof}

Note that the previous proposition remains valid even when $\lambda(x) = 0$. We will use this fact to establish the existence of a solution to problem~\eqref{11} in the absence of the delay term.
\begin{Theorem}\label{prop3}
Let $T>0$, $\lambda=0$, $ \lambda_0 \in H^{j}(\R)$  and $1\leq p<2j$, with $j\geq 1$.  If
\begin{equation*}
u_0 \in H^{2j+1}(\R),
\end{equation*}
then there exists a unique mild solution $u \in \mathcal{B}_{2j+1,T}$ of \eqref{11} such that
\begin{equation*}
\|u\|_{\B_{2j+1,T}} \leq \beta_{2j+1}(\|u_0\|_2)\|u_0\|_{H^{2j+1}(\R)},
\end{equation*}
where $\beta_{2j+1}: \R_+\rightarrow \R_+$ is a non-decreasing continuous function.
\end{Theorem}

\begin{proof} It is important to keep in mind that in this proof $\lambda=0$. We split the proof into several steps.
\vspace{1mm}

\noindent \textbf{Step 1: $u \in L^2(0,T; H^{2j+1}(\R)).$}

\vspace{1mm}

Since $u_0 \in H^{2j+1}(\R) \hookrightarrow L^2(\R)$, by Theorem \ref{t33} there exists a unique solution $u \in \B_{T}$, such that
\begin{equation}\label{e85}
\|u\|_{\B_T} \leq \beta_0(\|u_0\|_2)\|u_0\|_{L^2(\RR)}.
\end{equation}
We will show that $u \in \B_{2j+1,T}$. Let $v=u_t$. Then, $v$ solves the problem \eqref{ee105aa} with
\begin{multline*}
v_0(x)=u_t(x,0)=-(-1)^{j+1}\partial_x^{2j+1}u_0(x)+(-1)^m\partial_x^{2m} u_0(x)
-\frac{1}{p+1}\partial_x(u_0^{p+1})(x)-\lambda_0(x) u_0(x).
\end{multline*}
We can bound $v_0$ as follows:
\begin{align*}
\|v_0\|_2 &\leq \|\partial_x^{2j+1}u_0\|_2+\|\partial_x^{2m}u_0\|_2+\|u_0^p\partial_xu_0\|_2+\|\lambda_0u_0\|_2   \\
&\leq  (1+\|\lambda_0\|_{L^{\infty}(\R)})\|u_0\|_{H^{2j+1}(\R)}+\|u_0\|^{\frac{p}{2}}_{\infty}\|\partial_x u_0\|_2 \\
&\leq (1+\|\lambda_0\|_{L^{\infty}(\R)})\|u_0\|_{H^{2j+1}(\R)}+2^{\frac{p}{2}}\|u_0\|^{\frac{p}{2}}_{2}\|\partial_x u_0\|^{\frac{p+2}{2}}_2 .
\end{align*}
Using the Gagliardo--Nirenberg inequality, we have
\begin{equation}\label{e87'}
\|\partial_xu_0\|_2 \leq C \|\partial^{2j+1}_xu_0\|^{\frac{1}{2j+1}}_2\|u_0\|_2^{1-\frac{1}{2j+1}}.
\end{equation}
Applying \eqref{e87'}, we ensure that
\begin{equation*}
\begin{split}
\|v_0\|_2\leq&  (1+\|\lambda_0\|_{L^{\infty}(\R)})\|u_0\|_{H^{2j+1}(\R)}\\&+2^{\frac{p}{2}}C\|\partial^{2j+1}_xu_0\|^{\frac{p+2}{2(2j+1)}}_2\|u_0\|_2^{\frac{p+2}{2}\left(1 -\frac{1}{2j+1} \right)+ \frac{p}{2}}.
\end{split}
\end{equation*}
Then Young's inequality guarantees the following:
\begin{equation*}
\|v_0\|_2 \leq (1+\|\lambda_0\|_{L^{\infty}(\R)})\|u_0\|_{H^{2j+1}(\R)}+2^{\frac{p(2j+1)}{4j-p}}\|u_0\|^{\left(\frac{p+2}{2}\left(1 -\frac{1}{2j+1} \right)+ \frac{p-2}{2} \right) \frac{2(2j+1)}{4j-p}}_{2}\|u_0\|_{2}+\|\partial_x^{2j+1} u_0\|_2 
\end{equation*}
This leads to
\begin{equation}\label{e127}
\begin{split}
\|v_0\|_2&\leq C(\|u_0\|_2)\|u_0\|_{H^{2j+1}(\R)}
\end{split}
\end{equation}
with $$C(s)=  2+\|\lambda_0\|_{L^{\infty}(\R)}+2^{\frac{p(2j+1)}{4j-p}}s^{\left(\frac{p+2}{2}\left(1 -\frac{1}{2j+1} \right)+ \frac{p-2}{2} \right) \frac{2(2j+1)}{4j-p}}.$$ Thanks to Proposition \ref{prop4}, we see that $v \in B_{0,T}$ and
\begin{equation*}
\|v\|_{\B_T} \leq  \sigma (\|u\|_{\B_T})\|v_0\|_{L^{2}(\R)}.
\end{equation*}
Combining \eqref{e85} and \eqref{e127}, we get
\begin{equation}\label{e86}
\|v\|_{\B_T} \leq \sigma (\beta_0(\|u_0\|_2)\|u_0\|_2)C(\|u_0\|_2)\|u_0\|_{H^{2j+1}(\R)},
\end{equation}
which means
\begin{equation}\label{e75}
u, u_t \in L^2(0,T;H^j(\R)).
\end{equation}
Therefore,
\begin{equation}\label{e70}
u \in C([0,T];H^j(\R))  \hookrightarrow  C([0,T];C(\R)).
\end{equation}

On the other hand, note that $u^pu_x, \lambda_0 u$ belong to $ L^2(0,T;L^2(\R))$. Moreover, $$(-1)^{j+1}\partial_x^{2j+1}u+(-1)^{m}\partial_x^{2m}u=-u_t-u^pu_x-\lambda_0u\text{ in }D'(0,T,\R).$$ Hence,
\begin{equation*}
(-1)^{j+1}\partial_x^{2j+1}u+(-1)^{m}\partial_x^{2m}u = f \in L^2(0,T;L^2(\R)).
\end{equation*}
Taking the Fourier transform, we have
\begin{equation*}
\widehat{u}= \frac{\widehat{f}+\widehat{u}}{[\left(1-((-1)^{j+1}(i\xi)^{2j+1})-(-1)^{j}(i\xi)^{2j}\right)]}
\end{equation*}
and
\begin{align}\label{e73}
\|u(t)\|^2_{H^{2j+1}(\R)} \leq C \left\lbrace \|f(t)\|_2^2+\|u(t)\|_2^2\right\rbrace
\end{align}
with
\begin{equation*}
C=\sup_{\xi \in \R}\frac{1+|\xi|+|\xi|^{2}+|\xi|^{3}+\cdots+|\xi|^{2j+1}}{\left|1-((-1)^{j+1}(i\xi)^{2j+1})-(-1)^{j}(i\xi)^{2j}\right|}.
\end{equation*}
Integrating \eqref{e73} over $[0,T]$, we deduce that
\begin{equation}\label{e74}
u  \in L^2(0,T;H^{2j+1}(\R)),
\end{equation}
achieving Step 1.

\vspace{1mm}

\noindent \textbf{Step 2: $u \in C(0,T;H^{2j+1}(\mathbb{R}))$}

\vspace{1mm}

Observe that, according to \eqref{e75}, $u_t \in L^2(0,T;H^{-(2j+1)}(\R))$. Then, considering the Gelfand triplet
$H^{2j+1}(\R)\hookrightarrow H^{2m}(\R)\hookrightarrow H^{-(2j+1)}(\R)$,
by \cite[Chapter III - Lemma 1.2]{temam1984theory} we have $u  \in C([0,T];H^{2m}(\R)).$
This implies further
\begin{equation}\label{e78}
\partial_x^{2m}u, \lambda_0 u \in C([0,T];L^2(\R))\cap L^2(0,T;H^1(\R)).
\end{equation}
On the other hand, note that
\begin{equation*}
\begin{split}
\|u(t)^pu_x(t)-u(t_0)^pu_x(t_0)\|_{2} \leq&  \|[u(t)^p-u(t_0)^p]u_x(t)\|_{2}+\|u(t_0)^p[u_x(t)-u_x(t_0)]\|_{2} \\
\leq &C\left\lbrace \|(1+|u(t)|^{p-1}+|u(t_0)|^{p-1})|u(t)-u(t_0)|u_x(t)\|_2 \right.\\
&\left.+\|(1+|u(t_0)|^{p})|u_x(t)-u_x(t_0)|\|_2\right\rbrace\\
\leq &C\left\lbrace (1+\|u(t)\|^{p-1}_{\infty}+\|u(t_0)\|^{p-1}_{\infty})\|u(t)-u(t_0)\|_{\infty}\|u_x(t)\|_2 \right.\\
&\left.+(1+\|u(t_0)\|^{p}_{\infty})\|u_x(t)-u_x(t_0)\|_2\right\rbrace.
\end{split}
\end{equation*}
Then,  the regularity given in \eqref{e70} ensures that
\[
\lim_{t\rightarrow t_0}\|u(t)^pu_x(t)-u(t_0)^pu_x(t_0)\|_{2}=0
\]
and, therefore $u^pu_x \in C([0,T];L^2(\R))$. The above results  also guarantee that
\begin{equation}\label{e82}
u^pu_x \in C([0,T];L^2(\R)) \cap  L^2(0,T;H^1(\R)).
\end{equation}
Indeed, since $(u^pu_x)_x= pu^{p-1}u_x^2 + u^pu_{xx}$, it is sufficient to combine \eqref{e70},  \eqref{e74} and the following two estimates:
\begin{align*}
\|pu^{p-1}u_x^2\|_{L^2(0,T;L^2(\R))}\leq C\left\lbrace (1+\|u\|_{C([0,T];C(\R))}^{p-1})\|u_x\|_{C([0,T];C(\R))}\|u_x\|_{L^2([0,T];L^2(\R))}\right\rbrace
\end{align*}
and 
\begin{align*}
\|u^pu_{xx}\|_{L^2(0,T;L^2(\R))}&\leq C \left\lbrace (1+\|u\|_{C([0,T];C(\R))}^{p})\|u_{xx}\|_{L^2([0,T];L^2(\R))}\right\rbrace
\end{align*}
to ensure \eqref{e82}.

Now, since
\begin{align}\label{eqnew7}
(-1)^{j+1}\partial_x^{2j+1}u &=-(-1)^{m}\partial_x^{2m}u-u_t-(u^pu_x)-\lambda_0u,
\end{align}
from \eqref{e75},  \eqref{e78},  \eqref{e82}, we obtain
\begin{equation}\label{e83}
u \in  L^2(0,T;H^{3j+1}(\R)) \hookrightarrow L^2(0,T;H^{2j+2}(\R))  .
\end{equation}
Finally, considering the Gelfand triplet
\begin{equation*}
H^{2j+2}(\R)\hookrightarrow H^{2j+1}(\R)\hookrightarrow H^{-(2j+2)}(\R),
\end{equation*}
\cite[Chapter III - Lemma 1.2]{temam1984theory} gives
\begin{equation}\label{e84}
u \in  C([0,T];H^{2j+1}(\R)),
\end{equation}
and Step 2 is proved. Note that \eqref{e83} and \eqref{e84} imply that $u \in \B_{2j+1,T}$.

\vspace{1mm}

\noindent \textbf{Step 3:}  The following estimate holds: $$\|u\|_{C([0,T];H^{2j+1}(\R))} \leq \sigma_1(\|u_0\|_2)\|u_0\|_{H^{2j+1}(\R)}.$$

\vspace{1mm}

Indeed,  according to \eqref{e73}  we get
\begin{align}\label{e88}
\|u(t)\|_{H^{2j+1}(\R)} \leq C\left\lbrace \|u_t(t)\|_2+\|u^p(t)u_x(t)\|_2+\|\lambda_0u(t)\|_2 +\|u(t)\|_2 \right\rbrace.
\end{align}
Now, by using the Gagliardo--Nirenberg inequality, we obtain that
$$
\|u(t)^pu_x(t)\|_2 \leq 2^{\frac{1}{p}}\|u(t)\|_2^{\frac{p}{2}}\|u_x(t)\|_2^{\frac{p+2}{2}}
 \leq 2^{\frac{1}{p}} C\|u(t)\|_2^{\frac {4j(p+1)+p}{2(2j+1)}}\|u(t)\|_{H^{2j+1}(\R)}^{\frac{p+2}{2(2j+1)}}.$$
Moreover,  Young's inequality gives
\begin{align*}
\|u(t)^pu_x(t)\|_2 &\leq C\|u(t)\|_2^{  \frac {4j(p+1)+p}{4j-p}}+\frac{1}{2C}\|u(t)\|_{H^{2j+1}(\R)}.
\end{align*}
Substituting the above estimate  into the inequality  \eqref{e88}, we get 
\begin{align*}
\|u(t)\|_{H^{2j+1}(\R))} \leq C\left\lbrace \|u_t\|_{\B_T}+(1+\|\lambda_0\|_{\infty}) \|u\|_{\B_T}+\|u\|_{\B_T}^{ \frac {4j(p+1)+p}{4j-p}}\right\rbrace.
\end{align*}
Then, using \eqref{e85} and \eqref{e86} it follows that
\begin{align*}
\|u(t)\|_{H^{2j+1}(\R))} \leq& C\left\lbrace \sigma (\beta_0(\|u_0\|_2)\|u_0\|_2)C(\|u_0\|_2)\|u_0\|_{H^{2j+1}(\R)}+(C+\|\lambda_0\|_{\infty})\beta_0(\|u_0\|_2)\|u_0\|_{2} \right. \notag \\
&\left.+\beta_0^{ \frac {4j(p+1)+p}{4j-p}} (\|u_0\|_2)\|u_0\|_{2}^{ \frac {4j(p+1)+p}{4j-p}-1} \|u_0\|_{2} \right\rbrace \notag\\
\leq& \bar{\sigma}_1(\|u_0\|_2)\|u_0\|_{H^{2j+1}(\R)}
\end{align*}
with $$\bar{\sigma}_1(s)=C\left\lbrace \sigma (\beta_0(s)s)C(s)+(C+\|b\|_{\infty})\beta_0(s)+(\beta_0(s))^{\frac {4j(p+1)+p}{4j-p}}s^{\frac {4j(p+1)+p}{4j-p}-1}\right\rbrace.$$ 
Therefore, we obtain that
\begin{align*}
\|u\|_{C([0,T];H^{2j+1}(\R)))} \leq   \bar{\sigma}_1(\|u_0\|_2)\|u_0\|_{H^{2j+1}(\R)}.
 \end{align*}
Finally, we deduce from Theorem \ref{t33} and the previous inequality  the relation
\begin{align*}
\|u\|_{C([0,T];H^{2j+1}(\R)))} \leq  \sigma_1(\|u_0\|_2) \|u_0\|_{H^{2j+1}(\R)},
\end{align*}
with $\sigma_1(s)=\bar{\sigma}_1(s)$, finishing Step 3.

\vspace{1mm}

\noindent \textbf{Step 4:} The following estimate holds:
$$\|\partial_x^{3j+1}u\|_{L^2(0,T;L^2(\R))} \leq \sigma_3(\|u_0\|_2)\|u_0\|_{H^{2j+1}(\R)}.$$
Indeed, we deduce from the equation \eqref{eqnew7} that 
\begin{align*}
\int_0^T \|u(t)\|_{H^{3j+1}}^2dt &= \int_0^T\|\partial_x^{3j+1}u(t)\|_{2}^2dt +  \int_0^T\|u(t)\|_{2}^2dt
 \\
&\leq \int_0^T\|u(t)\|_{2}^2dt + \int_0^T\|\partial_x^{2m+j}u(t)\|_{2}^2dt  + \int_0^T\|\partial_x^{j}u_t(t)\|_{2}^2dt \\
& \quad  + \int_0^T\|\partial_x^{j}(u^pu_x)\|_{2}^2dt+ \int_0^T\|\partial_x^{j}(\lambda_0 u(t))\|_{2}^2dt \\
& \leq C T \left( \|u\|_{C([0,T];H^{2j+1})}   + \|u\|_{C([0,T];H^{2m+j})}\right)  + \|u_t\|_{B_{0,T}}\\
& \quad  + \int_0^T\|\partial_x^{j}(u^pu_x)\|_{2}^2dt+ \int_0^T\|\partial_x^{j}(\lambda_0 u(t))\|_{2}^2dt . 
\end{align*}
Note that $ H^{3j+1}(\R) \subset H^{2m+j}(\R) $. Therefore, by Step 3, the Gagliardo--Nirenberg inequality \eqref{e87} and Proposition \ref{prop4}, there exists a function $\sigma_2$ such that 
\begin{equation}\label{eqnew10}
\begin{split}
\int_0^T \|u(t)\|_{H^{3j+1}}^2dt  \leq & \sigma_2\left( \|u_0\|_{2}\right)\|u_0\|_{C([-\tau,0];H^{2j+1})} + \int_0^T\|\partial_x^{j}(u^pu_x)\|_{2}^2dt\\&+ \int_0^T\|\partial_x^{j}(\lambda_0 u(t))\|_{2}^2dt
 \end{split}
\end{equation}
We must estimate the last two integrals on the right-hand side of \eqref{eqnew10}.  
For the estimate of the last integrals, we observe that 
$$\partial_x^{j}( \lambda_0 u)= \sum_{k=0}^j \left(\begin{array}{c}
j \\
k
\end{array} \right) \partial_x^k (\lambda_0) \partial_x^{j-k}(u)$$ 
Using it, we obtain that
\begin{equation}\label{eqnew11}
\begin{split}
\int_0^T\|\partial_x^{j}( \lambda_0 u)(t)\|_2^2dt \leq& \sum_{k=0}^j \int_0^T\left(\begin{array}{c}
j \\
k
\end{array} \right) \|\partial_x^k (\lambda_0) \partial_x^{j-k}(u)(t)\|_2^2dt \\
=&\int_0^T\|\lambda_0\|_{\infty}^2\| \partial_x^{j}(u)(t)\|_2^2dt\\&+\sum_{k=1}^j\int_0^T \left(\begin{array}{c}
j \\
k
\end{array} \right) \|\partial_x^k (\lambda_0) \|_2^2\|\partial_x^{j-k}(u)(t)\|_{\infty}^2 dt  \\
\leq& C \|\lambda_0\|_{H^1(\R)}^2 \int_0^T\| \partial_x^{j}(u)(t)\|_2^2dt\\&+\sum_{k=1}^j \int_0^T\left(\begin{array}{c}
j \\
k
\end{array} \right) \|\lambda_0 \|_{H^k(\R)}^2\|u(t)\|_{H^{1+j-k}(\R)}^2dt  \\
\leq &C T \|\lambda_0 \|_{H^j(\R)}^2 \|u\|_{C([0,T];H^{2j+1}(\R)}^2 .
\end{split}
\end{equation}

Now, we concentrate on the estimate of the first integral on the right-hand side of \eqref{eqnew10}.  Observing that
\begin{align*}
\partial_x^{j}(u^pu_x)= \sum_{k=0}^j \left(\begin{array}{c}
j \\
k
\end{array} \right) \partial_x^k (u^p) \partial_x^{j-k}(u_x),
\end{align*}
we get
\begin{align*}
\int_0^T\|\partial_x^{j}(u^pu_x)(t)\|_{2}^2dt &\leq  \sum_{k=0}^j \left(\begin{array}{c}
j \\
k
\end{array} \right) \int_0^T \|\partial_x^k (u^p)(t) \|_{\infty}^2\|\partial_x^{j-k}(u_x)(t)\|_2^2dt \\
&= \sum_{k=0}^j \left(\begin{array}{c}
j \\
k
\end{array} \right) \int_0^T \|\partial_x^k (u^p)(t) \|_{\infty}^2\|u(t)\|_{H^{1+j-k}(\R)}^2dt \\
&\leq C \sum_{k=0}^j \left(\begin{array}{c}
j \\
k
\end{array} \right) \int_0^T \|\partial_x^k (u^p)(t) \|_{\infty}^2\|u(t)\|_{H^{2j+1}(\R)}^2dt. 
\end{align*}
On the other hand, note that the $k$-order derivative of the function $u^p$ can be written in the following way:
\begin{align*}
\partial_x^k (u^p) =C_{0,p}u^{p-1}\partial_x^ku+\sum_{n=1}^{k-1} C_{n,p} u^{p-n}F_n(u)+C_{k,p}u^{p-k}u_x^k ,
\end{align*}
where the constants $C_{n,p}$ are given by the formula 
$$C_{n,p}=M_n\prod_{i=0}^n (p-i)$$ with $M_n\in \N$, and  $F_n(u)$ is a differential operator involving sums and products of derivatives of $u$ with order less than $n+1$. Thanks to this fact, we have the following estimate:
\begin{align*}
\|\partial_x^k (u^p)\|_{\infty} \leq&\ |C_{0,p}|\|u\|_{\infty}^{p-1}\|\partial_x^ku\|_{\infty}+\sum_{n=1}^{k-1} |C_{n,p}|\| u\|_{\infty}^{p-n}\|F_n(u)\|_{\infty}+|C_{k,p}|\|u\|_{\infty}^{p-k}\|u_x\|_{\infty}^k \\
\leq&\ C\left(|C_{0,p}|\|u\|_{H^{2j+1}(\R)}^{p-1}\|u\|_{H^{k+1}(\R)}+\sum_{n=1}^{k-1} |C_{n,p}|\| u\|_{H^{2j+1}(\R)}^{p-n}\|u\|_{H^{n+2}(\R)}^{m(k)}\right. \\
& \left.+|C_{k,p}|\|u\|_{H^{2j+1}(\R)}^{p-k}\|u\|_{H^{2}(\R)}^k \right),
\end{align*}
where $m(n) \in \N$ and $C>0$ is a constant. Since
\begin{equation*}
H^{2j+1}(\R)\subset H^{k+1}(\R),\quad
H^{2j+1}(\R)\subset H^{n+1}(\R)\ \text{ and }\ 
H^{2j+1}(\R)\subset H^{2}(\R),
\end{equation*}
it follows that
\begin{align*}
\|\partial_x^k (u^p)\|_{\infty} &\leq C\left(|C_{0,p}|\|u\|_{H^{2j+1}(\R)}^{p}+\sum_{n=1}^{k-1} |C_{n,p}|\| u\|_{H^{2j+1}(\R)}^{p-n+m(k)} +|C_{k,p}|\|u\|_{H^{2j+1}(\R)}^{p} \right).
\end{align*}
It  follows that
\begin{equation*}
\begin{split}
\int_0^T\|\partial_x^{j}(u^pu_x)(t)\|_{2}^2dt \leq &C |C_{0,p}|^2 \int_0^T \|u(t)\|_{H^{2j+1}(\R)}^{2(p+1)}dt \\
&+C \sum_{k=0}^j \sum_{n=0}^k \left(\begin{array}{c}
j \\
k
\end{array} \right) |C_{n,p}|^2 \int_0^T \|u(t)\|_{H^{2j+1}(\R)}^{2(p-n+m(k)+1)}dt \\
&+ C\sum_{k=0}^j \left(\begin{array}{c}
j \\
k
\end{array} \right) |C_{k,p}|^2 \int_0^T \|u(t)\|_{H^{2j+1}(\R)}^{2(p+1)}dt. 
\end{split}
\end{equation*}
Finally, we obtain that
\begin{equation}\label{eqnew13}
\begin{split}
\int_0^T\|\partial_x^{j}(u^pu_x)(t)\|_{2}^2dt \leq& C T \left( |C_{0,p}|^2 \|u\|_{C([0,T];H^{2j+1}(\R))}^{2(p+1)}  \right.\\
&+ \sum_{k=0}^j \sum_{n=0}^k \left(\begin{array}{c}
j \\
k
\end{array} \right) |C_{n,p}|^2  \|u(t)\|_{C([0,T];H^{2j+1}(\R))}^{2(p-n+m(k)+1)} \\
&\left. + \sum_{k=0}^j \left(\begin{array}{c}
j \\
k
\end{array} \right) |C_{k,p}|^2 \|u(t)\|_{C([0,T];H^{2j+1}(\R))}^{2(p+1)} \right).
\end{split}
\end{equation}
Step 3 and thus the proof of the theorem is completed by putting the estimates \eqref{eqnew11}  and \eqref{eqnew13} together into \eqref{eqnew10}. 
\end{proof}

\subsection{Interpolation arguments} In this part of the work, we prove the well-posedness of the system \eqref{11}. To do that, let us introduce an interpolation argument due to Tartar \cite{tartar1972interpolation} and adapted by Bona and Scott \cite[Theorem 4.3]{bona1976solutions}.

Let $B_0$ and $B_1$ be two Banach spaces, where $B_1\subset B_0$ with the inclusion map continuous. Consider $f \in B_0$ and define
\begin{align*}
K(f,t)=\inf_{g \in B_1}\left\lbrace \|f-g\|_{B_0}+t\|g\|_{B_1}\right\rbrace,
\end{align*}
for $t \geq 0$.
For $0<\theta <1$ and $1\leq p \leq +\infty$, we introduce the set
\begin{align*}
B_{\theta,p}:=[B_0,B_1]_{\theta,p}=\left\lbrace f\in B_0: \|f\|_{\theta,p}:\left(\int_{0}^{\infty}K(f,t)t^{-\theta p - 1}dt\right)^{\frac{1}{p}}< \infty \right\rbrace,
\end{align*}
with the usual modification for the case $p=\infty$. Then, $B_{\theta,p}$ is a Banach space with norm $\|\cdot\|_{\theta,p}$. Given two pairs $(\theta_1,p_1)$ and $(\theta_2,p_2)$ as above,  $(\theta_1,p_1)\prec(\theta_2,p_2)$ will denote
\begin{equation*}
\left\lbrace \begin{tabular}{l l}
$\theta_1 < \theta_2,$ & or \\
$\theta_1 = \theta_3,$ &  and $p_1>p_2$.
\end{tabular}\right.
\end{equation*}
If $(\theta_1,p_1)\prec(\theta_2,p_2)$, then $B_{\theta_2,p_2}\subset B_{\theta_1,p_1}$ with a continuous inclusion. 
Then, the following result, proved by \cite[Theorem 4.3]{bona1976solutions},  holds:

\begin{Theorem}\label{inter}
Let $B_{0}^j$ and $B_{1}^j$ be Banach spaces such that $B_{1}^j\subset B_{0}^j$ with continuous inclusion mappings, for $j=1,2$. Let $\alpha$ and $q$ lie in the ranges $0 < \alpha < 1$ and $1\leq q\leq \infty$. Suppose that $\A$ is a mapping satisfying:
\begin{enumerate}
\item[(i)] $\A: B_{\alpha, q}^{1} \rightarrow B_0^2$ and, for $f,g \in  B_{\alpha, q}^{1}$,
\begin{equation*}
\|\A f - \A g\|_{B_0^2}\leq C_0\left(\|f\|_{B_{\alpha, q}^{1}}+\|g\|_{B_{\alpha, q}^{1}} \right)\|f-g\|_{B_0^1}.
\end{equation*}
\item[(ii)] $\A: B_1^1 \rightarrow B_1^2$ and, for $h\in  B_1^{1}$,
\begin{equation*}
\|\A h\|_{B_1^2}\leq C_1\left(\|h\|_{B_{\alpha, q}^{1}} \right)\|h\|_{B_1^1},
\end{equation*}
\end{enumerate}
where $C_j : \R^+ \rightarrow \R^+$ are continuous nondecreasing functions, for $j=0,1$. Then, if $(\theta,p)\geq(\alpha,q)$, $\A$ maps $B_{\theta, p}^{1}$ into $B_{\theta, p}^{2}$ and, for $f \in B_{\theta, p}^{1}$, we have
\begin{equation*}
\|\A f\|_{B_{\theta, p}^2}\leq C\left(\|f\|_{B_{\alpha, q}^{1}} \right)\|f\|_{B_{\theta,p}^1},
\end{equation*}
where $C(r)=4C_0(4r)^{1-\theta}C_1(3r)^{\theta}$,  with $r>0$.
\end{Theorem}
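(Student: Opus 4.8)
The plan is to prove this nonlinear interpolation statement by Tartar's method, working entirely with the $K$-functional. For a compatible couple $(X_0,X_1)$ and $h\in X_0$ write $K(h,t;X_0,X_1)=\inf_{g\in X_1}(\|h-g\|_{X_0}+t\|g\|_{X_1})$, so that $\|h\|_{\theta,p}=(\int_0^\infty (t^{-\theta}K(h,t;X_0,X_1))^p\,\frac{dt}{t})^{1/p}$. Fix $f\in\B_{\theta,p}^1$. Since $(\theta,p)\geq(\alpha,q)$ yields the continuous inclusion $\B_{\theta,p}^1\subset\B_{\alpha,q}^1$, the element $\A f$ is well defined through hypothesis (i), and I set $r:=\|f\|_{\B_{\alpha,q}^1}$.

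For each fixed $t>0$ I would choose a near-optimal decomposition $f=f_0(s)+f_1(s)$ for the couple $(B_0^1,B_1^1)$ at a scale $s$ to be selected, that is $f_0(s)\in B_0^1$, $f_1(s)\in B_1^1$ and $\|f_0(s)\|_{B_0^1}+s\|f_1(s)\|_{B_1^1}\leq 2K(f,s;B_0^1,B_1^1)$. Writing $\A f=(\A f-\A f_1(s))+\A f_1(s)$, hypothesis (i) puts the first summand in $B_0^2$ and hypothesis (ii) puts the second in $B_1^2$ (note $f_1(s)\in B_1^1\subset\B_{\alpha,q}^1$, so both are legitimate), whence
\begin{equation*}
K(\A f,t;B_0^2,B_1^2)\leq C_0\big(\|f\|_{\B_{\alpha,q}^1}+\|f_1(s)\|_{\B_{\alpha,q}^1}\big)\|f_0(s)\|_{B_0^1}+t\,C_1\big(\|f_1(s)\|_{\B_{\alpha,q}^1}\big)\|f_1(s)\|_{B_1^1}.
\end{equation*}
Once the arguments of the nondecreasing functions $C_0,C_1$ are replaced by fixed constants (next paragraph), I would write $C_0:=C_0(4r)$, $C_1:=C_1(3r)$ and choose the scale $s=C_1 t/C_0$; the right-hand side is then $C_0\big(\|f_0(s)\|_{B_0^1}+s\|f_1(s)\|_{B_1^1}\big)\leq 2C_0\,K(f,C_1 t/C_0;B_0^1,B_1^1)$. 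Inserting this pointwise bound into the integral defining $\|\A f\|_{\B_{\theta,p}^2}$ and performing the change of variables $u=C_1 t/C_0$ produces the homogeneous weight $(C_1/C_0)^{\theta}$ and hence the factor $C_0^{1-\theta}C_1^{\theta}$; this is exactly the structural origin of the claimed constant $C(r)=4C_0(4r)^{1-\theta}C_1(3r)^{\theta}$, and no further input is needed to pin down this power.

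The main obstacle, and the step deserving the most care, is the uniform bound $\|f_1(s)\|_{\B_{\alpha,q}^1}\leq 3\,\|f\|_{\B_{\alpha,q}^1}$ valid for every scale $s$, since it is precisely what justifies the substitutions $4r$ and $3r$ above and removes any circularity in the choice of $s$. I would prove it by estimating $K(f_1(s),\rho;B_0^1,B_1^1)$ on two regimes. For $\rho\geq s$, subadditivity of the $K$-functional together with $K(f_0(s),\rho)\leq\|f_0(s)\|_{B_0^1}\leq 2K(f,s)\leq 2K(f,\rho)$ gives $K(f_1(s),\rho)\leq 3K(f,\rho)$; for $\rho\leq s$, the admissible choice $g=f_1(s)$ in the infimum combined with $s\|f_1(s)\|_{B_1^1}\leq 2K(f,s)$ gives $K(f_1(s),\rho)\leq 2(\rho/s)K(f,s)$. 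Splitting $\int_0^\infty(\rho^{-\alpha}K(f_1(s),\rho))^q\,\frac{d\rho}{\rho}$ at $\rho=s$, the part over $\rho\geq s$ is dominated by $3^q\|f\|_{\B_{\alpha,q}^1}^q$, while the part over $\rho\leq s$ is an elementary integral, convergent because $(1-\alpha)q>0$, controlled by $(s^{-\alpha}K(f,s))^q$, which is in turn bounded by a constant depending only on $\alpha,q$ times $\|f\|_{\B_{\alpha,q}^1}^q$ (integrate $K(f,\cdot)$, increasing, over $[s,2s]$).

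Finally I would assemble the pieces: the key lemma fixes $C_0(4r)$ and $C_1(3r)$, the rescaling of the second paragraph then yields $\|\A f\|_{\B_{\theta,p}^2}\leq 2\,C_0(4r)^{1-\theta}C_1(3r)^{\theta}\|f\|_{\B_{\theta,p}^1}$, and it remains only to collect the numerical constants arising from the factor of near-optimality and from the $(\alpha,q)$-dependent constant of the key lemma. The single genuinely delicate point is sharpening the estimate over $\rho\leq s$ (and the near-optimality factor) so that the combined constant matches the clean value appearing in $C(r)$; the qualitative statement, namely that $\A$ maps $\B_{\theta,p}^1$ into $\B_{\theta,p}^2$ with a bound of the form $C(\|f\|_{\B_{\alpha,q}^1})\|f\|_{\B_{\theta,p}^1}$, follows already from the cruder constants.
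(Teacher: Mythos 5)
The paper itself contains no proof of Theorem \ref{inter}: the statement is quoted, constants included, from Bona and Scott \cite[Theorem 4.3]{bona1976solutions}, which in turn adapts Tartar \cite{tartar1972interpolation}. Your argument is precisely that method --- a near-optimal $K$-functional decomposition $f=f_0(s)+f_1(s)$, the splitting $\A f=(\A f-\A f_1(s))+\A f_1(s)$ to which hypotheses (i) and (ii) apply, a bound on $\|f_1(s)\|_{\B^1_{\alpha,q}}$ \emph{uniform in} $s$ so that the nondecreasing functions $C_0,C_1$ may be frozen at fixed arguments, the scale choice $s=C_1t/C_0$, and the change of variables producing the factor $C_0^{1-\theta}C_1^{\theta}$. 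So you have correctly reconstructed the proof that the paper outsources to the literature, and your argument fully establishes the qualitative assertion (boundedness of $\A$ on the interpolation scale with a constant controlled by $\|f\|_{\B^1_{\alpha,q}}$), which is all that is used in the proof of Theorem \ref{teo2}.

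The only shortfall is quantitative, and you flag it yourself: in the key lemma, your treatment of the regime $\rho\le s$ (integrating the increasing function $K(f,\cdot)$ over $[s,2s]$ to bound $s^{-\alpha}K(f,s)$) yields $\|f_1(s)\|_{\B^1_{\alpha,q}}\le c(\alpha,q)\|f\|_{\B^1_{\alpha,q}}$ with a constant $c(\alpha,q)>3$ that moreover degenerates as $\alpha\to1^-$; since $C_0,C_1$ are nondecreasing, the bound you obtain, of the form $2C_0\bigl((1+c)r\bigr)^{1-\theta}C_1(cr)^{\theta}$, is not dominated by the stated $C(r)=4C_0(4r)^{1-\theta}C_1(3r)^{\theta}$, so strictly speaking you prove the theorem with a different admissible constant. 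The gap closes with one observation: $K(f,\cdot)$ is nonnegative and concave (an infimum of affine functions of $t$), hence $t\mapsto K(f,t)/t$ is nonincreasing, and therefore for $\rho\le s$,
\begin{equation*}
K(f_1(s),\rho)\le\rho\,\|f_1(s)\|_{B_1^1}\le\frac{2\rho}{s}\,K(f,s)\le 2K(f,\rho).
\end{equation*}
Combined with your bound $K(f_1(s),\rho)\le 3K(f,\rho)$ for $\rho\ge s$, this gives $K(f_1(s),\rho)\le 3K(f,\rho)$ for \emph{all} $\rho>0$, hence $\|f_1(s)\|_{\B^1_{\alpha,q}}\le 3\|f\|_{\B^1_{\alpha,q}}$ exactly, uniformly in $s$ and for every $1\le q\le\infty$, with no integration trick needed. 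Your final estimate then reads $\|\A f\|_{\B^2_{\theta,p}}\le 2\,C_0(4r)^{1-\theta}C_1(3r)^{\theta}\|f\|_{\B^1_{\theta,p}}$, which implies the stated inequality since $2\le 4$.
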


It follows from Theorem \ref{t33} that, for each fixed $T >0$, the solution map
\begin{equation}\label{e100}
\A: L^2(\R) \rightarrow \B_{0,T}, \quad \A u_0=u
\end{equation}
is well-defined. Moreover, we have the following result:

\begin{Proposition}\label{prop2}
The solution map \eqref{e100} is locally Lipschitz continuous, that is, there exists a continuous function $\mathcal{C}_0: \R^+\times (0,\infty) \rightarrow \R^+$, nondecreasing in its first variable, such that, for all $u_0, v_0 \in L^2(\R) $, we have
\begin{equation*}
\|\A u_0-\A v_0\|_{0,T} \leq \mathcal{C}_0\left( \|u_0\|_2+\|v_0\|_2,T\right)\|u_0-v_0\|_{L^2(\R)}.
\end{equation*}
\end{Proposition}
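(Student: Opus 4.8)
The plan is to estimate the difference $w := \A u_0 - \A v_0 = u - v$ directly in $\B_{0,T}$. Writing $Mu := u^p u_x$ and subtracting the two Duhamel formulas, $w$ is seen to be the mild solution of the linear inhomogeneous delayed problem
\begin{equation*}
\begin{cases}
w_t = A_{\lambda_0} w - \lambda\, w(\cdot, t-\tau) - (Mu - Mv) & \qtq{in} \RR \times (0,T),\\
w(\cdot,s) = u_0(\cdot,s) - v_0(\cdot,s) & \qtq{in} \RR \times [-\tau,0],
\end{cases}
\end{equation*}
that is, a problem of the form \eqref{32} with forcing $f = -(Mu - Mv)$ and delay data $w_0 := u_0 - v_0$. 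By Theorem \ref{t33}, both $u$ and $v$ exist globally and there is a continuous nondecreasing function $R = R(\|u_0\|_2 + \|v_0\|_2, T)$ with $\|u\|_{\B_{0,T}}, \|v\|_{\B_{0,T}} \le R$; this $R$ controls the local Lipschitz constant of $M$ coming from Lemma \ref{lm2}(iv).

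First I would establish a local Lipschitz estimate on a short window. Fix $\delta \in (0,\tau]$ to be chosen. Applying the estimate \eqref{35} of Proposition \ref{p32} on $[0,\delta]$ with $f=-(Mu-Mv)$ and then bounding $\|Mu-Mv\|_{L^1(0,\delta;L^2)}$ by Lemma \ref{lm2}(iv), using $\|u\|_{\B_{0,\delta}},\|v\|_{\B_{0,\delta}}\le R$, gives
\begin{equation*}
\|w\|_{\B_{0,\delta}} \le C_\delta\, L(\delta,R)\, \|w\|_{\B_{0,\delta}} + C_\delta\Big( \|w(0)\|_2 + \|\lambda\|_\infty \|w_0\|_{L^1(-\tau,0;L^2)} + \|\lambda\|_\infty^{1/2} \|w_0\|_{L^2(-\tau,0;L^2)}\Big),
\end{equation*}
where $C_\delta$ is given by \eqref{36} and, since $p<2j$,
\begin{equation*}
L(\delta,R) := C\Big(3\cdot 2^{\frac p2}\delta^{\frac{2j-p}{4j}} R^{p} + 2^{\frac12}\delta^{\frac14} R\Big) \longrightarrow 0 \qtq{as} \delta \to 0.
\end{equation*}
Because $\delta \le \tau$, for $t\in[0,\delta]$ the delayed argument $w(\cdot,t-\tau)$ only sees the data $w_0$ on $[-\tau,0]$, so no value on $(0,\delta)$ feeds back. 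Choosing $\delta = \min\{\tau,\delta^*(R)\}$ with $\delta^*(R)$ small enough that $C_\delta L(\delta,R) \le \tfrac12$, I absorb the first term and obtain
\begin{equation*}
\|w\|_{\B_{0,\delta}} \le 2 C_\delta\Big( \|w(0)\|_2 + \|\lambda\|_\infty \|w_0\|_{L^1(-\tau,0;L^2)} + \|\lambda\|_\infty^{1/2} \|w_0\|_{L^2(-\tau,0;L^2)}\Big).
\end{equation*}

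Next I would iterate. Since $A_{\lambda_0}$ is autonomous, the same estimate holds on any window $[t_k,t_{k+1}]$ of length $\delta$, with $w(0)$ replaced by $w(t_k)$ and the delay data replaced by the restriction of $w$ to $[t_k-\tau,t_k]$; as $\delta\le\tau$, this delay data comes from strictly earlier windows. Partitioning $[0,T]$ into $N=\lceil T/\delta\rceil$ windows and setting $a_k := \|w\|_{\B_{0,[t_k,t_{k+1}]}}$, the bounds $\|w(t_k)\|_2\le a_{k-1}$, $\|w\|_{L^1(t_k-\tau,t_k;L^2)}\le \delta\sum_l a_l$ and $\|w\|_{L^2(t_k-\tau,t_k;L^2)}\le \delta^{1/2}\sum_l a_l$ (the sums running over the at most $\lceil\tau/\delta\rceil+1$ earlier windows meeting $[t_k-\tau,t_k]$, together with the contribution of $w_0$ for small $k$) produce the causal linear recursion
\begin{equation*}
a_k \le C_1\Big( a_{k-1} + \sum_{\substack{l<k\\ [t_l,t_{l+1}]\cap[t_k-\tau,t_k]\neq\emptyset}} a_l\Big) + C_1 \|w_0\|_{C([-\tau,0];L^2)},
\end{equation*}
with $C_1=C_1(\delta,\|\lambda\|_\infty,\tau)$ and a uniformly bounded number of summands. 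A discrete Gronwall argument then yields $a_k \le C_2\,\|w_0\|_{C([-\tau,0];L^2)}$ for every $k$, whence, using $\|w\|_{\B_{0,T}} \le 2\sum_{k=0}^{N-1} a_k$, the claimed bound with $C_0 = C_0(\|u_0\|_2+\|v_0\|_2,T)$.

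The hard part will be the bookkeeping in this iteration: the contraction gain forces $\delta\sim\delta^*(R)$ to be small while $\tau$ is fixed, so each window's delay data straddles several earlier windows, and one must check that the number of contributing windows stays bounded and that the resulting discrete Gronwall constant $C_2\sim C_1^{\,N}$, with $N=\lceil T/\delta^*(R)\rceil$, remains continuous and nondecreasing in $\|u_0\|_2+\|v_0\|_2$. Since $\delta^*(R)$ decreases and $N$ increases with $R$, while $R$ is nondecreasing in $\|u_0\|_2+\|v_0\|_2$, monotonicity follows; the only delicate point is smoothing the ceiling in $N$ by a continuous majorant to secure continuity of $C_0$. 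Everything else reduces to the already-established estimate \eqref{35} and Lemma \ref{lm2}(iv).
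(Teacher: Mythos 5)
Your proposal follows essentially the same route as the paper: a short-window estimate from Proposition \ref{p32} (estimate \eqref{35}) combined with the local Lipschitz bound of Lemma \ref{lm2}(iv), absorption of the nonlinear difference term by taking the window length small (depending on the a priori bound from Theorem \ref{t33}), and then iteration over $\lceil T/\delta\rceil$ windows producing a geometrically growing constant $C_0$. If anything, your bookkeeping is more careful than the paper's own, which in its window-by-window iteration tracks only the endpoint value $\|u(k\theta)-v(k\theta)\|_2$ and suppresses the delay-memory terms that your discrete Gronwall recursion handles explicitly.
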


\begin{proof}
Let $0 < \theta \leq T$ and $n=\left[ \frac{T}{\theta}\right]$. Theorem  \ref{t33} ensures that
\begin{equation}\label{e102}
\|\A u_0\|_{\B_{0,\theta}}\leq \beta_0(\|u_0\|_2)\|u_0\|_{L^2(\R)}.
\end{equation}
where $\beta_0$ is a constant function given by Theorem \ref{t33} and
\begin{equation*}
\|\A u_0-\A v_0\|_{0, \theta} \leq C_{\theta} \left\lbrace \|u_0-v_0\|_2+\|M(\A u_0)-M(\A v_0)\|_{L^1(0,\theta;L^2(\R))}\right\rbrace,
\end{equation*}
where $C_{\theta}=2e^{\theta\|b\|_{\infty}}$. Moreover, thanks to  Lemma \ref{lm2} we have
\begin{equation*}
\begin{split}
\|\A u_0-\A v_0\|_{\B_{0, \theta}} \leq& C_{\theta} \|u_0-v_0\|_2  + C_{\theta}C\left\lbrace 2^{\frac{p}{2}}\theta^{\frac{2j-p}{4j}}\left( \|\A u_0\|_{\B_{0, \theta}}^p+\|\A u_0\|_{\B_{0, \theta}}\|\A v_0\|_{\B_{0, \theta}}^{p-1}\right. \right. \notag\\
&\left.\left.+\|\A v_0\|_{\B_{0, \theta}}^{p}\right)+2^{\frac{1}{2}}\theta^{\frac{1}{4}}\|\A u_0\|_{\B_{0, \theta}} \right\rbrace\|\A u_0-\A v_0\|_{\mathcal{B}_{0, \theta}}.
\end{split}
\end{equation*}
Now,  inequality \eqref{e102} together with the following estimate: 
\begin{equation*}
\begin{split}
\|\A u_0-\A v_0\|_{0, \theta}  \leq &C_{\theta} \|u_0-v_0\|_2  + C_{\theta}C\left\lbrace 2^{\frac{p}{2}}\theta^{\frac{2j-p}{4j}}\beta_0^p\left( \| u_0\|^p_{L^2(\R)}\right. \right. \notag\\
&\left.\left. +\|\ u_0\|_{L^2(\R)}\|\ v_0\|^{p-1}_{L^2(\R)} +\|\ v_0\|^p_{L^2(\R)}\right) \right. \\\notag
&\left.+2^{\frac{1}{2}} \beta_0\theta^{\frac{1}{4}}\| u_0\|_{L^2(\R)} \right\rbrace\|\A u_0-\A v_0\|_{\mathcal{B}_{0, \theta}}, 
\end{split}
\end{equation*}
yields that
\begin{equation}\label{e104}
\|\A u_0-\A v_0\|_{\B_{0, \theta}} \leq 2C_{T} \|u_0-v_0\|_2,
\end{equation}
if   $\theta$ is chosen small enough.  Observe that setting 
\begin{equation*}
B_{0,[k\theta,(k+1)\theta]}:=C\left([k\theta,(k+1)\theta];L^2(\R)\right) \cap L^2(k\theta,(k+1)\theta;H^j(\R)).
\end{equation*}
with norm $\|\cdot\|_{\B_{0,[k\theta,(k+1)\theta]}}$, in a analogously way as done to \eqref{e104}, we can deduce thanks to estimate \eqref{e102} that
\begin{equation*}
\begin{split}
\|\A u_0-\A v_0\|_{\B_{0,[k\theta,(k+1)\theta]}} 
\leq& C_{\theta} \|u(k\theta)-v(k\theta)\|_2  + C_{\theta}C\beta_0\left\lbrace 2^{\frac{p}{2}}\theta^{\frac{2j-p}{4j}}\beta_0^p\left( \| u_0\|^p_{L^2(\R)}\right. \right. \notag\\
&\left.\left. +\|\ u_0\|_{L^2(\R)}\|\ v_0\|^{p-1}_{L^2(\R)} +\|\ v_0\|^p_{L^2(\R)}\right) \right. \\\notag
&\left.+2^{\frac{1}{2}} \beta_0\theta^{\frac{1}{4}}\| u_0\|_{L^2(\R)} \right\rbrace\|\A u_0-\A v_0\|_{\B_{0,[k\theta,(k+1)\theta]}}.
\end{split}
\end{equation*}
Lastly, we get
\begin{align}\label{e105}
\|\A u_0-\A v_0\|_{\B_{0,[k\theta,(k+1)\theta]} }\leq 2C_{T} \|u(k\theta)-v(k\theta)\|_2, \quad k=0,1,...,n-1.
\end{align}
On the other hand, note that \eqref{e104} and \eqref{e105} imply that
\begin{align*}
\|\A u_0-\A v_0\|_{\B_{0,[k\theta,(k+1)\theta]}} \leq 2^{k}C_{T}^{k} \|u_0-v_0\|_2, \quad k=0,1,...,n-1,
\end{align*}
and, therefore,
\begin{equation*}
\|\A u_0-\A v_0\|_{\B_{0,[k\theta,(k+1)\theta]}} \leq 2^{n}C_{T}^{n} \|u_0-v_0\|_2.
\end{equation*}
Finally,
\begin{align*}
\|\A u_0-\A v_0\|_{\B_{0,T}}&\leq \sum_{k=0}^{n-1} \|\A u_0-\A v_0\|_{\B_{0,[k\theta,(k+1)\theta]}} \leq \sum_{k=0}^{n-1} 2^{n}C_{T}^{n} \|u_0-v_0\|_2  \\
&\leq  2^{n}C_{T}^{n}n \|u_0-v_0\|_2 \leq \mathcal{C}_0(\|u_0\|_2+\|v_0\|_2)\|u_0-v_0\|_{L^2(\R)}
\end{align*}
with $\mathcal{C}_0(s)=\frac{T}{\theta(s)}\left[2C_T\right]^{\frac{T}{\theta(s)}}$, completing the proof of the proposition.
\end{proof}

We are in a position to prove the main result of this section.

\begin{proof}[Proof of Theorem \ref{teo2}]
We define
\begin{equation*}
B_0^1=L^2(\R),\quad B_0^2=\B_{0,T},\quad B_1^1=H^{2j+1}(\R)\quad \text{and}\quad  B_1^2=\B_{2j+1,T}.
\end{equation*}
Thus,
\begin{equation*}
B_{\frac{s}{2j+1},2}^1=[L^2(\R),H^{2j+1}(\R)]_{\frac{s}{2j+1},2}=H^s(\R) \quad \text{and}\quad  B_{\frac{s}{2j+1},2}^2=[B_{0,T},B_{2j+1,T}]_{\frac{s}{2j+1},2}=B_{s,T}.
\end{equation*}
Combining Proposition \ref{prop2} and Theorem \ref{prop3} we obtain $(i)$ and $(ii)$ in  Theorem \ref{inter}. Then, Theorem \ref{inter} gives the existence of the solution to the equation \eqref{11},  and Theorem \ref{teo2} follows.
\end{proof}

Finally, we prove Corollary \ref{corollarywellposedness}, which establishes that every mild solution of system \eqref{11}, with $\lambda(x)=0$, is indeed a regular solution when the origin is not considered.

\begin{proof}[Proof of Corollary \ref{corollarywellposedness}] The result is shown by using the bootstrap argument. Consider $T>0$ and $0<\varepsilon<T$. So, for $u_0 \in L^2(\RR)$, it follows from Theorem \ref{teo2} that the problem \eqref{11}, with $\lambda(x)=0$,  has a unique solution $$u\in{\mathcal{B}}_{0,T}
= C([0,T];L^2(\RR ))\cap L^2(0,T;H^{j}(\RR)).$$ So, we have  $u(t) \in H^j(\RR)$ for almost every $t \in [0,T]$. Let $t_0 \in (0,\varepsilon)$ such that $u(t_0) \in H^j(\RR)$. Applying Theorem \ref{teo2} and Remark \ref{remark1} with $u_0=u(t_0)$, we conclude that the restriction of $u$ to $[t_0,T]$ is the solution of \eqref{11} with the initial value $u(t_0)$ and $\lambda(x)=0$, and it belongs to the following class:  $${\mathcal{B}}_{j,[t_0,T]}
= C([t_0,T];H^j(\RR ))\cap L^2(t_0,T;H^{2j}(\RR)).$$ 
Thus we have  $u(t) \in H^{2j}(\RR)$ for almost every $t \in [t_0,T]$. Let $t_1 \in (t_0,\varepsilon)$ be such that $u(t_1) \in H^{2j}(\RR)$. Again, it follows from Theorem \ref{teo2} and Remark \ref{remark1} that the restriction of $u$ to $[t_1,T]$ is the solution of \eqref{11} with the initial value $u(t_1)$ and $\lambda(x)=0$, and it belongs to  $${\mathcal{B}}_{2j,[t_1,T]}
= C([t_1,T];H^{2j}(\RR ))\cap L^2(t_1,T;H^{3j}(\RR)).$$ Finally, since $u(t) \in H^{3j}(\RR)$ for almost every $t \in [t_1,T]$ and $H^{3j}(\RR)\subset H^{2j+1}(\RR)$, it follows the same way that for $t_2 \in (t_1,\varepsilon)$  such that $u(t_2) \in H^{2j+1}(\RR)$, the restriction of $u$ to $[t_2,T]$ is the solution of \eqref{11} with the initial value $u(t_2)$ and $\lambda(x)=0$, belonging to the following set:  
$${\mathcal{B}}_{2j+1,[t_2,T]}
:= C([t_2,T];H^{2j+1}(\RR ))\cap L^2(t_2,T;H^{3j+1}(\RR)).$$ 
This completes the proof of the corollary.
\end{proof}

\section{Exponential stabilization: damped and delayed systems}\label{sectionexponentialL2}
This section is devoted to the proof of the exponential stability of the system \eqref{12} in $L^{2}(\R)$. 
First, we prove the result for the linear system, and then we extend it to the nonlinear one. 
To do this, we consider the following Lyapunov functional:
\begin{equation}\label{24}
E(t):= E(u(t))=\frac 12\int_\RR u^2 (x,t) dx,
\end{equation}
and we recall for $\lambda\in L^\infty (\RR )$  the definition of ${\mathcal E}(\cdot)$, given in \eqref{25}.

\subsection{Linear system}
The first result ensures the exponential stability of the linear system \eqref{12} with localized damping and delay terms:

\begin{Theorem}\label{t23}
Let $\lambda , \lambda_0\in L^\infty (\RR)$ and $\lambda_0$  satisfy \eqref{15}. 
If there exist a  constant $\gamma>0$ and a function $\beta\in L^p(\RR),$ for $1\le p<\infty$,  such that the function $\lambda$ satisfies \eqref{27} and \eqref{28}, then the system \eqref{12} is exponentially stable. 
More precisely,  the solutions $u$ of \eqref{12} satisfy the   inequality
\begin{equation}\label{29}
{\mathcal E}(t) \le C(u_0) e^{-\nu_j t},
\end{equation}
where $\nu_j$ and $C(u_0)$ are defined by \eqref{210} and \eqref{211}, respectively.
\end{Theorem}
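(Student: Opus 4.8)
The plan is to use the Lyapunov functional $\mathcal{E}$ from \eqref{25} and to show that along solutions it satisfies the differential inequality $\frac{d}{dt}\mathcal{E}(t)\le-\nu\,\mathcal{E}(t)$ with $\nu$ as in \eqref{210}; once this is in hand, Gronwall's lemma gives $\mathcal{E}(t)\le\mathcal{E}(0)e^{-\nu t}$, and since $\mathcal{E}(0)\le C(u_0)$ with $C(u_0)$ as in \eqref{211}, estimate \eqref{29} follows. I would first carry out all computations for smooth data $u_0\in H^{2j+1}(\RR)$, for which Proposition \ref{p21} furnishes a classical solution, and then pass to general data by a density argument based on the continuous dependence in Proposition \ref{p31}.

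First I would differentiate $\mathcal{E}(t)$. Differentiating the delay integral in \eqref{25} produces three contributions: a boundary term $\frac12\int_\RR|\lambda|u^2(t)\,dx$ from the endpoint $s=t$, a boundary term $-\frac12 e^{-\tau}\int_\RR|\lambda|u^2(t-\tau)\,dx$ from $s=t-\tau$, and the term $-\frac12\int_{t-\tau}^t\int_\RR e^{-(t-s)}|\lambda|u^2\,dx\,ds$ arising from the exponential weight. For $\int_\RR u\,u_t\,dx$ I would substitute $u_t$ from \eqref{12} and integrate by parts, using the skew-symmetry identity $\int_\RR(-1)^{j+1}(\partial_x^{2j+1}u)u\,dx=0$ and the dissipation identity $\int_\RR(-1)^j(\partial_x^{2j}u)u\,dx=\int_\RR(\partial_x^j u)^2\,dx$ already recorded in the excerpt. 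This leaves the cross term $-\int_\RR\lambda\,u(t-\tau)u(t)\,dx$, which I would estimate by Young's inequality with weight $e^\tau$, namely $|\lambda\,u(t-\tau)u(t)|\le\frac{e^\tau}{2}|\lambda|u^2(t)+\frac{e^{-\tau}}{2}|\lambda|u^2(t-\tau)$: the second summand cancels exactly the boundary term at $s=t-\tau$, and the first combines with the $s=t$ boundary term to yield the coefficient $\frac{e^\tau+1}{2}$. The outcome is
\begin{equation*}
\frac{d}{dt}\mathcal{E}(t)\le-\int_\RR(\partial_x^j u)^2\,dx-\int_\RR\lambda_0 u^2\,dx+\frac{e^\tau+1}{2}\int_\RR|\lambda|u^2\,dx-\frac12\int_{t-\tau}^t\int_\RR e^{-(t-s)}|\lambda|u^2\,dx\,ds.
\end{equation*}

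Next I would insert the hypotheses: by \eqref{15}, $-\int_\RR\lambda_0 u^2\le-\gamma_0\|u\|_2^2$, and by \eqref{27}, $\frac{e^\tau+1}{2}\int_\RR|\lambda|u^2\le\gamma\|u\|_2^2+\int_\RR\beta u^2\,dx$, so the right-hand side is dominated by $-\|\partial_x^j u\|_2^2-(\gamma_0-\gamma)\|u\|_2^2+\int_\RR\beta u^2\,dx-D(t)$, where $D(t)$ is the delay part of $\mathcal{E}$. The heart of the argument is the critical term $\int_\RR\beta u^2\,dx$: by Hölder it is at most $\|\beta\|_q\|u\|_{2q'}^2$, with $q'$ the conjugate exponent of $q$, and a Gagliardo--Nirenberg interpolation inequality bounds $\|u\|_{2q'}$ in terms of $\|\partial_x^j u\|_2$ and $\|u\|_2$. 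Applying Young's inequality to the resulting product and tuning its parameter so as to absorb the full dissipation $\|\partial_x^j u\|_2^2$, I would reach $\int_\RR\beta u^2\,dx\le\|\partial_x^j u\|_2^2+c_q\|\beta\|_q^{\frac{2q}{2q-1}}\|u\|_2^2$ with $c_q$ as in \eqref{26}. The smallness condition \eqref{28} guarantees $c_q\|\beta\|_q^{\frac{2q}{2q-1}}<\gamma_0-\gamma$, so the dissipation cancels and, writing $E(t)$ for the $L^2$ part of $\mathcal{E}$ from \eqref{24}, what remains is
\begin{equation*}
\frac{d}{dt}\mathcal{E}(t)\le-2\Big(\gamma_0-\gamma-c_q\|\beta\|_q^{\frac{2q}{2q-1}}\Big)E(t)-D(t)\le-\nu\,\mathcal{E}(t),
\end{equation*}
where $\nu$ is precisely \eqref{210}; the $\min$ with $1$ enters because the delay part $D$ decays with rate $1$ coming from the weight $e^{-(t-s)}$.

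The main obstacle is the sharp calibration of the Gagliardo--Nirenberg and Young steps: one must pin down the correct interpolation exponent for $\|u\|_{2q'}$ between $L^2$ and the space controlled by $\partial_x^j$, and then optimize the free parameter in Young's inequality so that the coefficient multiplying $\|\partial_x^j u\|_2^2$ is exactly one while the leftover coefficient of $\|u\|_2^2$ matches the explicit constant $c_q$ of \eqref{26}. Making these constants line up with the threshold in \eqref{28} and with the decay rate \eqref{210} is the delicate point; the remaining steps (differentiation of $\mathcal{E}$, integration by parts, the two algebraic identities, the cancellation of the delay boundary term, and the final Gronwall integration together with $\mathcal{E}(0)\le C(u_0)$) are routine.
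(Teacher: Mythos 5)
Your proposal is correct and takes essentially the same route as the paper's own proof: the identical Lyapunov functional, the same differentiation of the delay integral and integration-by-parts identities, Young's inequality with weight $e^\tau$ on the cross term producing the coefficient $\frac{e^\tau+1}{2}$, H\"older followed by the interpolation $\Vert u\Vert_{2q'}^2\le \Vert u\Vert_2^{2/q'}\Vert u\Vert_\infty^{2/q}$ combined with $\Vert u\Vert_\infty^2\le 2\Vert u\Vert_2\Vert \partial_x^j u\Vert_2$, and the Young step calibrated (the paper chooses $4\delta^{2q}=2q$) so that the dissipation $\Vert\partial_x^j u\Vert_2^2$ is absorbed exactly, leaving the constant $c_q$ of \eqref{26} and the rate \eqref{210}, after which Gronwall concludes. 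The only cosmetic differences are that you make explicit the cancellation of the delay boundary terms and the density argument, which the paper compresses into citations; there is no gap.
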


\begin{proof}
Using the same argument as  in  \cite[Theorem 4.7]{CCKR}, we consider $u_0\in C([-\tau,0];H^{2j+1}(\R));$ then $ u\in C([0,T];H^{2j+1}(\R))$. Taking the derivative in $t$ of ${\mathcal E}(t)$ we get
\begin{equation*}
\begin{split}
\frac {d{\mathcal E}} {dt} (t)
=&\int_\RR u(t)(-(-1)^j\partial^{2j}_xu(t)-\lambda_0 u(t)-\lambda u(t-\tau )) dx
+\frac 12\int_\RR \vert \lambda\vert u^2(t) dx\\
&- \frac 12e^{-\tau}\int_\RR \vert \lambda\vert u^2(t-\tau ) dx
-\frac 12\int_{t-\tau }^t\int_\RR e^{-(t-s)}\vert \lambda (x)\vert u^2 (x,s)dx \, ds,
\end{split}
\end{equation*}
since we have
\begin{equation*}
\int_{\mathbb{R}}(-1)^{j+1}(\partial_x^{2j+1}u)u dx=0.
\qtq{for all}u\in H^{2j+1}.
\end{equation*}
Integrating by parts,  and using the Young inequality,  remembering that  \eqref{15} and \eqref{27} are satisfied, we have
\begin{align*}
\frac {d{\mathcal E}} {dt} (t)
\le &-\int_\RR (\partial^j_xu)^2(t) dx -\gamma_0 \int_\RR u^2(t) dx +\frac {e^\tau +1} 2\int_\RR \vert \lambda(x)\vert  u^2 (t) dx\\
& -\frac 12\int_{t-\tau }^t\int_\RR e^{-(t-s)}\vert \lambda (x)\vert u^2 (x,s)dx ds\\
\le&-\int_\RR (\partial^j_xu)^2(t)  dx - (\gamma_0 -\gamma) \int_\RR u^2(t) dx +
\int_\RR \beta(x) u^2 (t) dx\\
& -\frac 12\int_{t-\tau }^t\int_\RR e^{-(t-s)}\vert \lambda (x)\vert u^2 (x,s)\ dx \ ds;
\end{align*}
we have used the relations $$(-1)^j\int_{\mathbb{R}}u\partial^{2j}_xudx=\int_{\mathbb{R}}(\partial^j_xu)^2dx.$$
Now the Hölder inequality ensures that
\begin{equation}\label{212a}
\frac {d{\mathcal E}} {dt} (t)\le-\Vert \partial^j_xu(t)\Vert_2^2 -(\gamma_0 -\gamma)\Vert u(t)\Vert_2^2
+\Vert \beta\Vert_q \Vert u\Vert_{2q'}^2 -\frac 12\int_{t-\tau }^t\int_\RR e^{-(t-s)}\vert \lambda (x)\vert u^2 (x,s)dx \, ds
\end{equation}
with $q'=\frac q {q-1}.$ Hence, using \eqref{213} in \eqref{212} in \eqref{212a} we obtain that
\begin{align}\label{214}
\begin{split}
\frac {d{\mathcal E}} {dt} (t)\le&-\Vert \partial^j_xu(t)\Vert_2^2 -(\gamma_0 -\gamma )\Vert u(t)\Vert_2^2+\Vert \beta\Vert_q \Vert u(t)\Vert_2^{\frac 2 {q'}} \Vert u(t)\Vert_{\infty}^{\frac 2 q}  \\
&-\frac 12\int_{t-\tau }^t\int_\RR e^{-(t-s)}\vert \lambda (x)\vert u^2 (x,s)dx \, ds.
\end{split}
\end{align}
Now, following the same steps as in the proof of Theorem \ref{t33}, that is, using Young's inequality, \eqref{214}, \eqref{215} and \eqref{e87} in \eqref{214}, we obtain for every fixed $\delta >0$ the following relation:
\begin{equation*}
\begin{split}
\frac {d{\mathcal E}} {dt} (t)
\le& -\Vert \partial^j_xu(t)\Vert_2^2 -(\gamma_0 -\gamma)\Vert u(t)\Vert_2^2
+\frac {
\Big ( \frac {1}{\delta } \Vert \beta \Vert_q\Vert u\Vert_2^{\frac {2qj-1}{qj}}  \Big )^\frac {2qj}{2qj-1}}{\frac {2qj}{2qj-1}} + \frac {
 \left ( \delta 2^{\frac 1 q}C^{\frac 1 q}\Vert \partial_x^j u\Vert_{2}^{\frac {1}{qj}}     \right )^{2qj}}{2qj}\\
& -\frac 12\int_{t-\tau }^t\int_\RR e^{-(t-s)}\vert \lambda (x)\vert u^2 (x,s)dx ds.
\end{split}
\end{equation*}
Picking $\delta>0$ such that $2^{2j}C^{2j}\delta^{2qj}=2qj,$ this yields 
\begin{multline*}
\frac {d{\mathcal E}} {dt} (t)\le - \Big (
\gamma_0 -\gamma  -\frac {2qj-1}{2qj} \Big (\frac {2^{2j-1}C^{2j}} {qj} \Big )^{\frac 1 {2qj-1}}
\Vert \beta \Vert_q^{\frac {2qj}{2qj-1}}
\Big ) \Vert u(t)\Vert_2^2\\
-\frac 12\int_{t-\tau }^t\int_\RR e^{-(t-s)}\vert \lambda (x)\vert u^2 (x,s)dxds.
\end{multline*}
Taking \eqref{28} into account, we infer the inequality
\begin{equation*}
\frac {d{\mathcal E}} {dt} (t)\le -  {\nu_j} {\mathcal E}(t)
\end{equation*}
where $\nu_j$ is as in \eqref{210}. 
We complete the proof by observing that the estimate \eqref{29} is a direct consequence of  Gronwall's Lemma with $C(u_0):={\mathcal E}(0).$
\end{proof}

\subsection{Nonlinear system} In this section, we are interested in proving that the higher-order dispersive system in an unbounded domain is asymptotically stable when we introduce a localized damping mechanism and a delay term.  
We will use the Lyapunov approach to prove that the energy ${\mathcal E}(t)$ defined by \eqref{25} tends to $0$ as $t$ goes to $\infty$.

Observe that we can consider the functions $\lambda_0 , \lambda\in L^\infty (\RR)$  satisfying \eqref{15}, \eqref{27} and \eqref{28}.  Thus, in this case, we have a simple situation that can be easily proved using the arguments developed in the previous section. 

\begin{proof}[Proof of Theorem \ref{t36}] The proof of this result is a consequence of the  Lyapunov functional defined in  \eqref{24} and is analogous to what was done in Theorem \ref{t23}, so we will omit it.
\end{proof}

\subsection{Indefinite damping case} In this section, our issue is to see what happens with general dissipative damping. To be precise, consider the coefficient $\lambda_0$ 
changing sign.  Let us assume that there exist a number $\gamma >0$  and a function $\beta\in L^p(\RR)$ for some $1\le p< \infty$, such that \eqref{41} and \eqref{42} are satisfied. 
We can prove the following asymptotic result for the solutions of the linearized system associated with \eqref{11}, for $m=j$:

\begin{Theorem}\label{t41}
Consider $\lambda , \lambda_0\in L^\infty (\RR)$, with $\lambda_0$ satisfying \eqref{41} and \eqref{42}. If there exist a constant $\gamma>0$ and a function $\beta\in L^p(\RR),$ with the same $p$ as in \eqref{42}, such that the function $\lambda$ satisfies
\eqref{27} and \eqref{43}, then the system 
\begin{equation}\label{12a}
\begin{cases}
u_t(x,t)+(-1)^{j+1}\partial_x^{2j+1}u(x,t)+(-1)^j\partial_x^{2j}u(x,t) +\lambda_0 u(x,t)\\\hspace{4cm}+\lambda u(x,t-\tau )=0
&\qtq{in}\RR\times (0,\infty),\\
u(x, s)=u_0(x, s)&\qtq{in}\RR\times [-\tau, 0]
\end{cases}
\end{equation}
is exponentially stable.  Moreover,  the solution of \eqref{12a} satisfies the estimate
\begin{equation}\label{44}
{\mathcal E}(t) \le C(u_0,\tau) e^{-\tilde\nu_j t},
\end{equation}
with $\tilde\nu_j$ defined by \eqref{45}.
\end{Theorem}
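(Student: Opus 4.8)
The plan is to rerun, essentially verbatim, the Lyapunov computation carried out for Theorem \ref{t23}, the one new ingredient being the treatment of the indefinite damping coefficient $\lambda_0$. First I would argue by density: assume $u_0\in H^{2j+1}(\RR)$ so that the solution has enough regularity to justify every integration by parts, and recover the general case at the end. Differentiating the functional $\mathcal{E}(t)$ from \eqref{25} in time and using that the odd-order operator is skew-adjoint, $\int_\RR(-1)^{j+1}(\partial_x^{2j+1}u)u\,dx=0$, together with the identity $(-1)^j\int_\RR u\,\partial_x^{2j}u\,dx=\int_\RR(\partial_x^ju)^2\,dx$, produces exactly the same four terms as before.

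The decisive change occurs in the damping term. Whereas for Theorem \ref{t23} the assumption $\lambda_0\ge\gamma_0$ yields $-\int_\RR\lambda_0u^2\,dx\le-\gamma_0\int_\RR u^2\,dx$, here the indefinite bound \eqref{41} only gives
\begin{equation*}
-\int_\RR\lambda_0u^2(t)\,dx\le-\gamma_0\int_\RR u^2(t)\,dx+\int_\RR\beta_0(x)u^2(t)\,dx.
\end{equation*}
The delayed cross term $-\int_\RR\lambda u(t-\tau)u(t)\,dx$ is handled exactly as before, by the weighted Young inequality with weight $e^\tau$, which cancels the $-\tfrac12 e^{-\tau}\int_\RR|\lambda|u^2(t-\tau)\,dx$ contribution and leaves the factor $\tfrac{e^\tau+1}{2}\int_\RR|\lambda|u^2(t)\,dx$; invoking \eqref{27} bounds this by $\gamma\int_\RR u^2(t)\,dx+\int_\RR\beta(x)u^2(t)\,dx$. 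Collecting terms I would arrive at
\begin{equation*}
\frac{d\mathcal{E}}{dt}(t)\le-\int_\RR(\partial_x^ju)^2\,dx-(\gamma_0-\gamma)\int_\RR u^2\,dx+\int_\RR(\beta+\beta_0)u^2\,dx-\frac12\int_{t-\tau}^t\int_\RR e^{-(t-s)}|\lambda|u^2\,dx\,ds,
\end{equation*}
which is precisely the inequality from the proof of Theorem \ref{t23} with $\beta$ replaced by $\beta+\beta_0$.

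From this point the argument is identical to that of Theorem \ref{t23}. I would apply Hölder's inequality to obtain $\int_\RR(\beta+\beta_0)u^2\,dx\le\|\beta+\beta_0\|_q\|u\|_{2q'}^2$ with $q'=q/(q-1)$, interpolate via \eqref{213} and the embedding estimate \eqref{215}, and then optimize a Young inequality by choosing $\delta$ with $4\delta^{2q}=2q$. This converts the indefinite term into $c_q\|\beta+\beta_0\|_q^{2q/(2q-1)}\|u\|_2^2$ while absorbing the negative $\|\partial_x^ju\|_2^2$ contribution, so that
\begin{equation*}
\frac{d\mathcal{E}}{dt}(t)\le-\Big(\gamma_0-\gamma-c_q\|\beta+\beta_0\|_q^{\frac{2q}{2q-1}}\Big)\|u(t)\|_2^2-\frac12\int_{t-\tau}^t\int_\RR e^{-(t-s)}|\lambda|u^2\,dx\,ds,
\end{equation*}
with $c_q$ as in \eqref{26}. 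Writing $A:=\gamma_0-\gamma-c_q\|\beta+\beta_0\|_q^{2q/(2q-1)}$, hypothesis \eqref{43} is exactly the statement $A>0$; since $\tilde\nu=\min\{2A,1\}$ by \eqref{45}, the coefficient $A$ of $\|u\|_2^2$ dominates $\tfrac{\tilde\nu}{2}$ and the constant $\tfrac12$ in front of the delay integral dominates $\tfrac{\tilde\nu}{2}$, so recalling the form of $\mathcal{E}$ in \eqref{25} one gets $\frac{d\mathcal{E}}{dt}(t)\le-\tilde\nu\mathcal{E}(t)$. Gronwall's lemma then yields $\mathcal{E}(t)\le\mathcal{E}(0)e^{-\tilde\nu t}\le C(u_0,\tau)e^{-\tilde\nu t}$, which is \eqref{44}.

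I do not expect a genuinely new obstacle beyond Theorem \ref{t23}: the whole point is that the indefinite part $\beta_0$ of the damping merges with the delay-induced perturbation $\beta$ into a single $L^q$ coefficient, and the smallness condition \eqref{43} is calibrated precisely so that the Gagliardo--Nirenberg/Young trade-off still leaves a strictly positive multiple of $\|u\|_2^2$; the bound \eqref{42} on $\beta_0$ alone is a standing hypothesis inherited from the nonlinear counterpart, Theorem \ref{t42}. The only mild subtleties worth flagging are bookkeeping ones: verifying that the substitution $\beta\mapsto\beta+\beta_0$ reproduces the exponent $\tilde\nu$ of \eqref{45} rather than that of \eqref{210}, and checking that the density/regularization step is legitimate for the linear system \eqref{12a}.
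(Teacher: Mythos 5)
Your proposal is correct and follows essentially the same route as the paper's proof: the same Lyapunov functional ${\mathcal E}$ from \eqref{25}, the same absorption of the indefinite part of $\lambda_0$ into the combined coefficient $\beta+\beta_0$ via \eqref{41}, and the same H\"older--interpolation--Young optimization (with $4\delta^{2q}=2q$) leading to $\frac{d{\mathcal E}}{dt}\le-\tilde\nu{\mathcal E}$ and Gronwall. If anything, you are slightly more careful than the paper on one point: you explain why $\tilde\nu$ must be capped by $1$ (the delay-integral part of ${\mathcal E}$ carries coefficient $1$), whereas the paper's proof states $\tilde\nu$ without the minimum that appears in \eqref{45}.
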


\begin{proof}Taking the derivative in time of ${\mathcal E}(t),$ using the equation \eqref{12a}, and then integrating by parts and using the Young inequality, we get
\begin{align*}
\frac {d{\mathcal E}} {dt} (t)
\le &-\int_\RR (\partial^j_xu)^2(t)\, dx -\gamma_0 \int_\RR u^2(t)\, dx  +\int_\RR (\beta_0 (x)+\frac {e^\tau+1} 2 \vert \lambda (x)\vert ) u^2 (t)\, dx\\
& -\frac 12\int_{t-\tau }^t\int_\RR e^{-(t-s)}\vert \lambda (x)\vert u^2 (x,s)\, dx\,  ds\\
\le&-\int_\RR (\partial^j_xu)^2(t)\,   dx - (\gamma_0 -\gamma) \int_\RR u^2(t)\,  dx +
\int_\RR (\beta_0 (x)+\beta(x) ) u^2 (t)\,  dx\\
& -\frac 12\int_{t-\tau }^t\int_\RR e^{-(t-s)}\vert \lambda (x)\vert u^2 (x,s)\, dx \, ds,
\end{align*}
Hölder's inequality ensures that
\begin{equation}\label{46}
\begin{split}
\frac {d{\mathcal E}} {dt} (t)\le&-\Vert \partial_x^ju(t)\Vert_2^2 -(\gamma_0 -\gamma)\Vert u(t)\Vert_2^2
\\&+\Vert \beta_0 +\beta\Vert_q \Vert u\Vert_{2q'}^2 -\frac 12 \int_{t-\tau}^t\int_\RR e^{-(t-s)} \vert \lambda\vert u^2(x,s)\,  dx \, ds
\end{split}
\end{equation}
with $q'=\frac q {q-1}.$ Thanks to the inequality \eqref{46} and the Gagliardo--Nirenberg inequality \eqref{215}, we deduce that
\begin{align*}
\frac {d{\mathcal E}} {dt} (t)\le&-\Vert \partial^j_xu(t)\Vert_2^2 -(\gamma_0 -\gamma)\Vert u(t)\Vert_2^2\\
&+\Vert \beta +\beta_0\Vert_q  \Vert u(t)\Vert_{\infty}^{\frac 2 q}  \Vert u\Vert_2^{\frac{2}{q'}}-\frac 12 \int_{t-\tau}^t\int_\RR e^{-(t-s)} \vert \lambda\vert u^2(x,s)\,  dx\,  ds \\
\le&-\Vert \partial^j_xu(t)\Vert_2^2 -(\gamma_0 -\gamma)\Vert u(t)\Vert_2^2+2^{\frac 1q}C^{\frac 1q}\Vert \beta +\beta_0\Vert_q  \Vert u(t)\Vert_{2}^{\frac{2qj-1}{qj}  \Vert\partial_x^j u\Vert_2^{\frac{1}{qj}}}\\
&-\frac 12 \int_{t-\tau}^t\int_\RR e^{-(t-s)} \vert \lambda\vert u^2(x,s)\,  dx\,  ds.
\end{align*}
By Young's inequality, we infer that
\begin{align*}
\frac {d{\mathcal E}} {dt} (t)
\le& -\Vert \partial^j_xu(t)\Vert_2^2 -(\gamma_0 -\gamma)\Vert u(t)\Vert_2^2
+\frac {\Big ( \frac {1}{\delta } \Vert \beta+\beta_0 \Vert_q\Vert u(t)\Vert_2^{\frac {2qj-1}{qj}}  \Big )^\frac {2qj}{2qj-1}}{\frac {2qj}{2qj-1}} + \frac {
 \left ( \delta 2^{\frac 1 q}C^{\frac 1 q}\Vert \partial_x^j u(t)\Vert_{2}^{\frac {1}{qj}}     \right )^{2qj}}{2qj}\\
& -\frac 12 \int_{t-\tau}^t\int_\RR e^{-(t-s)} \vert \lambda\vert u^2(x,s)\,  dx\,  ds 
\end{align*}
for every fixed $\delta >0$.
Now taking $\delta>0$ such that $2^{2j}C^{2j}\delta^{2qj}=2qj,$ we obtain that
\begin{align}\label{47}
\begin{split}
\frac {d{\mathcal E}} {dt} (t)\le& - \Big (
\gamma -\gamma_0 \frac {2qj-1}{2qj} \Big (\frac {2^{2j-1}C^{2j}} {qj} \Big )^{\frac 1 {2qj-1}}
\Vert \beta_0 +\beta\Vert_p^{\frac {2p}{2p-1}}
\Big ) \Vert u(t)\Vert_2^2\\
&-\frac 12 \int_{t-\tau}^t\int_\RR e^{-(t-s)} \vert \lambda\vert u^2(x,s)\,  dx\,  ds.
\end{split}
\end{align}
Finally,  under the assumption \eqref{43} and taking \eqref{47} into consideration, we get
\begin{equation*}
\frac {d{\mathcal E}} {dt} (t)\le - \tilde\nu_j {\mathcal E}(t)
\end{equation*}
with
\begin{equation*}
\tilde\nu_j= 2 \Big (
\gamma_0 -\gamma-\frac {2qj-1}{2qj} \Big (\frac {2^{2j-1}C^{2j}} {qj} \Big )^{\frac 1 {2qj-1}}
\Vert \beta +\beta_0\Vert_p^{\frac {2p}{2p-1}}
\Big ).
\end{equation*}
This gives us the exponential estimate \eqref{44} for the solution of \eqref{12a}, with $C(u_0,\tau)$ defined as in \eqref{211}.  
\end{proof}

We can directly extend the well-posedness and stability results to the nonlinear system. Precisely, Theorem \ref{t42} is a consequence of the previous theorem and the results presented in Subsection \ref{s3}. So, with this in hand, let us prove the Corollary \ref{cor2}.

\begin{proof}[Proof of Corollary \ref{cor2}]
Note that, after a change of variable, the restriction of $u$ to $[t,t+T]$ is a solution to the problem \eqref{11} with the initial value $u(t)$. Observe also that $u \in C([\tau_1,t];L^2(\R))$ for all $\tau_1 \in [-\tau, t)$. Thus, by Theorem \ref{t33} we have
\begin{align*}
\|u\|_{\B_{0,[t,t+T]}} &\leq C_{t+T} \left \{ \Vert u(t)\Vert_2+
  \Vert \lambda\Vert_\infty  \Vert u\Vert_{L^1(t-\tau, t; L^2(\RR))} + \Vert \lambda\Vert_\infty^{1/2}  
 \Vert u\Vert_{L^2(t-\tau, t; L^2(\RR))}  \right \},
\end{align*}
with $C_s$ is given by 
\begin{equation*}
C_{s}=\sqrt{\frac 3 2} \left (1+e^{2\Vert \lambda\Vert_\infty s}\right )^{1/2}
e^{(\Vert \lambda\Vert_\infty +\Vert \lambda_0\Vert_\infty) )s}.
\end{equation*}
Now it follows from  Theorem \ref{t42}  that 
\begin{align*}
\|u\|_{\B_{0,[t,t+T]}} 
&\leq C_{2T} \left \{ 2C(u_0,\tau)e^{- \nu t}+
  \Vert \lambda\Vert_\infty  \Vert u\Vert_{L^1(t-\tau, t; L^2(\RR))} + \Vert \lambda\Vert_\infty^{1/2}  
 \Vert u\Vert_{L^2(t-\tau, t; L^2(\RR))}  \right \}
\end{align*}
where $C(u_0,\tau)$ is given by \eqref{211}, completing the proof of the corollary. 
\end{proof}

\section{Exponential stabilization: Damped system}\label{sectionexponentialHsj+1}

In this section, we establish the exponential stability in the space $H^{s}(\R) $ for $s\in [0,2j+1]$, for the general dispersive system \eqref{11} without the time delay term ($\tau=\lambda=0$):\begin{equation}\label{sindelay}
\begin{cases}
u_t(x,t)+(-1)^{j+1}\partial_x^{2j+1}u(x,t)+(-1)^m\partial_x^{2m}u(x,t) +\lambda_0(x) u(x,t)\\\hspace{4cm}+ \frac{1}{p+1}\partial_xu^{p+1}(x,t)=0,
&\qtq{in}\RR\times (0,\infty),\\
 u(x, 0)=u_0(x),&\qtq{in}\RR, 
\end{cases}
\end{equation}
 with  $m\leq j$, $j,m\in\mathbb{N},$ and $ 1\leq p < 2j$. 
 
\begin{Remarks}\label{rmkL}
It is important to point out the following: 
\begin{enumerate}[\upshape (i)]
 \item The strategy to obtain the stabilization results in $H^s(\mathbb{R})$, for any $s\in[0,2j+1]$, will be to prove the result first in $L^2(\mathbb{R})$,  and secondly in the domain of the operator, that is, in $H^{2j+1}(\mathbb{R})$. 
With these two results in hand, we employ the interpolation results due to J.-L. Lions \cite{lions} to get the exponential decay for any $s\in[0,2j+1]$.
 \item In case $\tau=\lambda=0$ the stabilization for the system \eqref{sindelay} in  $L^2(\mathbb{R})$ holds thanks to the Theorem \ref{t36} with localized damping $\lambda_0$ (see Section \ref{sectionexponentialL2}).
\end{enumerate}
\end{Remarks}
 
\subsection{Stabilization in $H^{2j+1}(\RR)$} 
Due to the Remarks \ref{rmkL}, we will consider the stabilization problem associated with the solutions of \eqref{sindelay} in the space $H^{2j+1}(\R)$. 
\begin{Proposition}\label{prop7}
Let $T>0$. For $1\leq p < 2j$, with $j\geq 1$ and $\lambda_0$  satisfying \eqref{15} or \eqref{41},  there exist $\gamma>0$, $T_0>0$ and a nonnegative continuous  function $\alpha_3: \R^+ \rightarrow \R^+$ such that, for every $u_0 \in H^{2j+1}(\R)$, the corresponding solution $u$ of \eqref{sindelay} satisfies
\begin{equation}\label{e129}
\|u(t)\|_{H^{2j+1}(\R)} \leq \alpha_{2j+1}(\|u_0\|_2,T_0)\|u_0\|_{H^{2j+1}(\R)}e^{-\gamma t}, \qquad \forall t\geq T_0.
\end{equation}
\end{Proposition}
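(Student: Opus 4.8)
The plan is to deduce the $H^{2j+1}$-decay from two ingredients: the $L^2$-decay of $u$, which by Remarks~\ref{rmkL}(ii) is already furnished by Theorem~\ref{t36} (or by Theorem~\ref{t42} when $\lambda_0$ is indefinite), say $\|u(t)\|_2\le C(\|u_0\|_2)e^{-\eta t}$ for some $\eta>0$; and the $L^2$-decay of $v:=u_t$. These two are linked through the elliptic regularity estimate \eqref{e73}. Indeed, writing \eqref{sindelay} (with $\tau=\lambda=0$) as $(-1)^{j+1}\partial_x^{2j+1}u+(-1)^m\partial_x^{2m}u=-u_t-\lambda_0u-u^pu_x$ and inserting it into \eqref{e73} gives, at each time $t$,
\begin{equation*}
\|u(t)\|_{H^{2j+1}(\R)}\le C\big(\|u_t(t)\|_2+\|u(t)\|_2+\|u^pu_x(t)\|_2\big).
\end{equation*}
The nonlinear term is treated exactly as in Step~3 of Theorem~\ref{prop3}: Gagliardo--Nirenberg yields $\|u^pu_x\|_2\le C\|u\|_2^{A}\|u\|_{H^{2j+1}}^{B}$ with $B=\frac{p+2}{2(2j+1)}<1$ (using $1\le p<2j$) and some $A>0$, so that Young's inequality absorbs the $\|u\|_{H^{2j+1}}$ factor --- legitimate because $\|u(t)\|_{H^{2j+1}}$ is finite by Theorem~\ref{prop3} --- leaving
\begin{equation}\label{planell}
\|u(t)\|_{H^{2j+1}(\R)}\le C\big(\|u_t(t)\|_2+\|u(t)\|_2+\|u(t)\|_2^{A/(1-B)}\big).
\end{equation}
Since the two rightmost terms decay with the $L^2$-rate, the whole problem reduces to proving that $\|u_t(t)\|_2$ decays exponentially.

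To control $v=u_t$ I would differentiate \eqref{sindelay} in time: $v$ solves the linearized problem \eqref{ee105} with $\lambda=0$ (as in Step~1 of Theorem~\ref{prop3}). Multiplying by $v$ and integrating by parts (the dispersive term drops, the term $(-1)^m\partial_x^{2m}v$ supplies a dissipation $\|\partial_x^{j}v\|_2^2$ as in \eqref{37}, and $\int_\R v\,\partial_x(u^pv)\,dx=-\int_\R u^pvv_x\,dx$) gives the identity
\begin{equation*}
\tfrac12\tfrac{d}{dt}\|v\|_2^2+\|\partial_x^{j}v\|_2^2+\int_\R\lambda_0 v^2\,dx=\int_\R u^p v v_x\,dx.
\end{equation*}
Bounding $\big|\int_\R u^pvv_x\,dx\big|\le\|u\|_\infty^{p}\|v\|_2\|v_x\|_2$, interpolating $\|v_x\|_2\le C\|\partial_x^{j}v\|_2^{1/j}\|v\|_2^{1-1/j}$ by \eqref{e87}, and using Young's inequality to absorb half of $\|\partial_x^{j}v\|_2^2$ leaves a coefficient $C\|u\|_\infty^{2pj/(2j-1)}$ in front of $\|v\|_2^2$; in the indefinite case the extra term $\int(\gamma_0-\lambda_0)v^2\le\int\beta_0 v^2$ is handled by the H\"older--Young scheme of Theorems~\ref{t23} and \ref{t41} under \eqref{42}--\eqref{43}. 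Because $\|u\|_\infty\le\sqrt2\,\|u\|_2^{1/2}\|\partial_x^{j}u\|_2^{1/2}$ by \eqref{215} and $\|u(t)\|_2\to0$, this coefficient tends to $0$ provided $\|\partial_x^{j}u(t)\|_2$ stays bounded; hence there is $T_0$ with $C\|u\|_\infty^{2pj/(2j-1)}\le\gamma_0/2$ for $t\ge T_0$, giving $\tfrac{d}{dt}\|v\|_2^2\le-\gamma_0\|v\|_2^2$ and therefore $\|u_t(t)\|_2\le\|u_t(T_0)\|_2\,e^{-\frac{\gamma_0}{2}(t-T_0)}$. Inserting this together with the $L^2$-decay into \eqref{planell}, and using that on $[0,T_0]$ Theorem~\ref{prop3} controls $\|u_t(T_0)\|_2$ and $\|u(T_0)\|_2$ linearly in $\|u_0\|_{H^{2j+1}}$, yields \eqref{e129} with a rate $\gamma=\min\{\gamma_0/2,\eta\}>0$ and a nondecreasing continuous $\alpha_{2j+1}(\cdot,T_0)$.

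The delicate point, where I would concentrate the effort, is that both steps above are conditional on a uniform-in-time bound $\sup_{t\ge0}\|u(t)\|_{H^{2j+1}}\le M$: the decay of $\|u_t\|_2$ needs $\|\partial_x^{j}u\|_2$ bounded in order to make $\|u\|_\infty$ small, while a naive iteration of the regularity estimate of Theorem~\ref{prop3} over successive windows $[t,t+T]$ only produces a constant $\beta_{2j+1}(\|u_0\|_2)^n$ that \emph{grows}. I would close this loop by a continuation (bootstrap) argument: on $[0,T_0]$ Theorem~\ref{prop3} bounds $\|u\|_{\B_{2j+1,T_0}}$, hence $\|u_t\|_2$ and $\|\partial_x^{j}u\|_2$, and then \eqref{planell} bounds $\|u(t)\|_{H^{2j+1}}$; for $t\ge T_0$, since $\|u(t)\|_2\le C(\|u_0\|_2)e^{-\eta T_0}$ can be made small by choosing $T_0$ large, on any interval where $\|u\|_{H^{2j+1}}\le 2M$ one has $\|\partial_x^{j}u\|_2\le 2M$ and thus $\|u\|_\infty\le\sqrt2\,(2M)^{1/2}\|u\|_2^{1/2}$ small, so the $v$-estimate is strictly dissipative and forces $\|u_t\|_2$ --- and with it $\|u(t)\|_{H^{2j+1}}$ through \eqref{planell} --- to decrease rather than reach $2M$. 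This rules out escape, establishing the uniform bound $M$ and the exponential decay \eqref{e129} simultaneously. (Interpolating \eqref{e129} with the $L^2$-decay, as announced in Remarks~\ref{rmkL}(i), is what subsequently gives the full range $s\in[0,2j+1]$ in Theorem~\ref{teoexpHs}.)
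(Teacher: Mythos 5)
Your opening reduction coincides with the paper's own first step: writing the equation as an identity for $\partial_x^{2j+1}u$, absorbing $\|u^pu_x\|_2$ and $\|\partial_x^{2m}u\|_2$ by Gagliardo--Nirenberg and Young, and thereby reducing everything to exponential decay of $\|u_t(t)\|_2$ (this is exactly \eqref{eqnew111} in the paper). The divergence is in how you obtain that decay, and there the gap you yourself flag as ``the delicate point'' is genuine and is not closed by your continuation argument. Your energy estimate for $v=u_t$ leaves the coefficient $C\|u(t)\|_\infty^{2pj/(2j-1)}$ in front of $\|v\|_2^2$, and you need it below $\gamma_0$ \emph{pointwise in time} for $t\ge T_0$; via \eqref{215} this requires a uniform bound $\|\partial_x^ju(t)\|_2\le 2M$ on $[T_0,\infty)$. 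The bootstrap that is supposed to provide $M$ is quantitatively circular: the entry value $M\sim C\bigl(\|u_t(T_0)\|_2+\|u(T_0)\|_2+\cdots\bigr)$ can only be estimated through the well-posedness theory (Theorem \ref{prop3}, Proposition \ref{prop4}), whose constants grow like $e^{cT_0}$ with $c$ at least of order $\|\lambda_0\|_\infty\ge\gamma_0$ (see $C_T$ in \eqref{36} and $\rho,\sigma$ in Proposition \ref{prop4}); meanwhile your smallness requirement $\|u\|_\infty\lesssim (M e^{-\eta T_0})^{1/2}$ forces $T_0\gtrsim\frac1\eta\log M$, i.e. $M\lesssim e^{\eta T_0}$. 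Since the $L^2$ rate satisfies $\eta=\nu/2\le\min\{\gamma_0,\tfrac12\}\le c$, the two constraints $M\sim e^{cT_0}$ and $M\lesssim e^{\eta T_0}$ are incompatible for large $T_0$, and for small $T_0$ they amount to a smallness hypothesis on the data that the Proposition does not make. The continuation argument (``the solution cannot reach $2M$'') only runs \emph{after} a compatible pair $(M,T_0)$ exists, so it does not repair this. (A time-integrated variant, using $\int\|u\|_\infty^{2pj/(2j-1)}dt<\infty$ over decaying windows, would save your scheme when $2pj/(2j-1)\le 2$, i.e. $p=1$, but not in the full range $1\le p<2j$.)

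The paper's proof is designed precisely to avoid asking for any uniform strong-norm bound. It writes $v$ by Duhamel, $v(t)=S(t)v_0-\int_0^tS(t-s)\,[u^p(s)v(s)]_x\,ds$, estimates the perturbation on each window $[nT,(n+1)T]$ by Lemma \ref{lm2} as $2^{p/2}T^{\frac{2j-p}{4j}}\|u\|^p_{\B_{0,[nT,(n+1)T]}}\|v\|_{\B_{0,[nT,(n+1)T]}}$, and controls $\|v\|_{\B_{0,[nT,(n+1)T]}}\le\sigma\bigl(\|u\|_{\B_{0,[nT,(n+1)T]}},T\bigr)\|v(nT)\|_2$. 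The decisive input is Corollary \ref{cor2}: the windowed norms $\|u\|_{\B_{0,[nT,(n+1)T]}}$ decay exponentially as a consequence of the $L^2$ theory alone, so they are below any prescribed $\beta$ for $n\ge N$. This makes the per-window factor $e^{-\nu T/2}+2^{p/2}T^{\frac{2j-p}{4j}}\beta^{p}\sigma(\beta,T)<1$, yielding geometric decay of $\|v(nT)\|_2$ with $T_0=NT$, after which the conclusion follows as in your plan. In short: the smallness that makes the perturbative step work must be smallness of the $\B_{0}$-norm of $u$ over time windows (an $L^2$-level quantity the theory already controls), not pointwise-in-time smallness of $\|u\|_\infty$, which is exactly the quantity your argument cannot furnish without circularity.
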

\begin{proof}
First, we note that there exists a positive constant $c$ such that the following estimate holds:
$$
\frac{1}{c}\|u(t)\|_{H^{2j+1}(\R)} \leq \|u(t)\|_2 + \|\partial_x^{2j+1}u(t)\|_2,
$$
Then it follows from Theorem \ref{t42} that
\begin{equation}\label{aaaa}
\frac{1}{c}\|u(t)\|_{H^{2j+1}(\R)} \leq 2C(u_0,\tau) e^{-\tilde\nu t} + \|\partial_x^{2j+1}u(t)\|_2, \quad \text{for all $t>0$},
 \end{equation}
with $\tilde\nu$ defined by  \eqref{45}  and $C(u_0,\tau)>0$ as in \eqref{211}. Since $\tau=0$, then $C(u_0,0)=\frac12 \|u_0\|_2^2$. The inequality \eqref{aaaa} shows that we need to establish an exponential estimate for the $(2j+1)$s derivative in the space of $u(t)$. To do that, observe that 
$$
\|\partial_x^{2j+1}u(t)\|_2 \leq \|u_t(t)\| + \|\partial_x^{2m}u(t)\|_2+\|u^pu_x(t)\|_2+\|\lambda_0u(t)\|_2,  \forall t>0. 	
$$
Using the Gagliardo--Nirenberg inequality \eqref{e87} we obtain that
\begin{align*}
\|\partial_x^{2j+1}u(t)\|_2 \leq \|u_t(t)\| + \|\partial_x^{2j+1}u(t)\|_2^{\frac{2m}{2j+1}}\|u(t)\|_2^{1-\frac{2m}{2j+1}}+\|u(t)\|_{\infty}^p\|u_x(t)\|_2
+\|\lambda_0\|_{\infty}\|u(t)\|_2.
\end{align*}
Young's inequality together with Theorem \ref{t42} for $\lambda=\tau=0$ imply that
\begin{align*}
\frac{1}{2}\|\partial_x^{2j+1}u(t)\|_2 &\leq \|u_t(t)\| + C\|u(t)\|_2+2^{\frac{p}{2}}\|u\|^{\frac{p}{2}}_{2}\|\partial_x u\|^{\frac{p+2}{2}}_2 +\|\lambda_0\|_{\infty}\|u(t)\|_2 \\
&\leq \|u_t(t)\| + \left( C +\|\lambda_0\|_{\infty}\right) \|u(t)\|_2+2^{\frac{p}{2}}C\|\partial^{2j+1}_xu\|^{\frac{p+2}{2(2j+1)}}_2\|u\|_2^{\frac{p+2}{2}\left(1 -\frac{1}{2j+1} \right)+ \frac{p}{2}}.   
\end{align*}
Hence, we obtain
\begin{align}\label{eqnew111}
\frac{1}{4}\|\partial_x^{2j+1}u(t)\|_2 &\leq \|u_t(t)\| + \left( C +\|\lambda_0\|_{\infty}\right) \|u(t)\|_2+C\|u\|_2^{\left(\frac{p+2}{2}\left(1 -\frac{1}{2j+1} \right)+ \frac{p}{2}\right)\left( \frac{2(2j+1)}{4j-p}\right)}  \notag \\
&\leq \|u_t(t)\| + 2^{\frac12}C(u_0,0)^{\frac12}\left( C +\|\lambda_0\|_{\infty}\right)  e^{-\frac{\tilde\nu}{2} t}+ 2^{\frac12}C(u_0,0)^{\frac12}e^{-\frac{\tilde\nu}{2} \left( \frac{4j(p+1)+p}{4j-p}\right) t}.   
\end{align}
Let $v=u_t$. Then, by Proposition \ref{prop4}, with $\lambda=\tau=0$, $v$ solves the linearized equation \eqref{ee105} with initial value
\begin{equation*}
v_0(x)=-(-1)^{j+1}\partial_x^{2j+1}u_0+(-1)^m\partial_x^{2m} u_0-\frac{1}{p+1}\partial_x(u_0^{p+1})-\lambda_0(x) u_0 
\end{equation*}
such that 
\begin{equation}\label{e122}
\|v\|_{\B_T} \leq \sigma(\|u\|_{\B_T},T)\Vert v(0)\Vert_2.
\end{equation}
Now, after a change of variable, the restriction of $v$ to $[t, t + T]$ is a solution of the system \eqref{ee105}, considering $\lambda=\tau=0$, concerning the initial value $v(t)$, thus
\begin{equation}\label{e125}
\|v\|_{\B_{[t, t + T]}} \leq \sigma(\|u\|_{\B_{[t, t + T]}},T) \Vert v(t)\Vert_2.
\end{equation}
Applying Corollary \ref{cor2}, it follows that
\begin{align*}
\|v\|_{\B_{[t,t+T]}} &\leq \sigma \Big(  C_{2T} \left \{ 2C(u_0,0)e^{- \nu t}\right \},T \Big)  \Vert v(t)\Vert_2. 
\end{align*}
Therefore, we obtain that
\begin{align*}
\|v\|_{\B_{[t,t+T]}} &\leq \gamma (u_0,t,T)\Vert v(t)\Vert_2,
\end{align*}
where $\gamma(s,t, T)=\sigma \left( 2C_{2T} C(s,0) ),T\right)$.  
On the other hand, the solution $v$ may be written as
\begin{equation*}
v(t)=S(t)v_0-\int_0^t S(t-s)[u^p(s)v(s)]_xds
\end{equation*}
where $S(t)$ is a $C_0$-semigroup of contraction in $L^2(\R)$ generated by the operator associated to system \eqref{ee105}. Note that $v_1(t)=S(t)v_0$ is a solution of the problem (\ref{ee105}) with $u^p=0$. Then, proceeding as in the proof of Theorem \ref{t41} with $\lambda=\tau=0$, we have
\begin{equation}\label{e123}
\|v_1(t)\|_2 \leq 2^{1/2} C(v_0,0)^{1/2}e^{- \frac{\nu}{2} t}, \qquad \forall t \geq 0,
\end{equation}
with $\nu$ defined by \eqref{45}.
Set $$v_2(t)=\int_0^t S(t-s)[u^p(s)v(s)]_xds. $$ Note that
\begin{align*}
 \|v_2(T)\|_2 &\leq \|pu^{p-1}u_xv\|_{L^1(0,T;L^2(\R))}+ \|u^pv_x\|_{L^1(0,T;L^2(\R))}.
\end{align*}
Hence, thanks to  Lemma \ref{lm2}, the following holds:
\begin{align}\label{e124}
\|v_2(T)\|_2 &\leq 2^{\frac{p}{2}}CT^{\frac{2j-p}{4j}}T^{\frac12\left(1-\frac{1}{j}\right)}\|u\|_{\mathcal{B}_{0,T}}^{p}\|v\|_{\mathcal{B}_{0,T}}.
\end{align}
Using \eqref{e122},  \eqref{e123} and \eqref{e124}, we obtain 
\begin{align*}
\|v(T)\|_2 &\leq  \|v_1(T)\|_2+\|v_1(T)\|_2 \leq 2^{1/2} C(v_0,0)^{1/2}e^{- \frac{\nu}{2} t} +2^{\frac{p}{2}}CT^{\frac{2j-p}{4j}}T^{\frac12\left(1-\frac{1}{j}\right)}\|u\|_{\mathcal{B}_{0,T}}^{p}\|v\|_{\mathcal{B}_{0,T}}.
\end{align*}

Let us now consider a sequence $y_n(\cdot)=v(\cdot, nT)$ and set $w_n(\cdot,t)=v(\cdot,t+nT)$. For $t\in [0,T]$, $w_n$ solves the problem
\begin{equation*}
\left\lbrace\begin{tabular}{l l}
$\partial_t w_n+(-1)^{j+1}\partial_x^{2j+1} w_n +(-1)^m\partial_x^{2m} w_n+[u(\cdot+nT)^pw_n]_x+\lambda_0w_n = 0,$ & in $\R\times \R^+,$ \\
$w_n(0)=y_n,$ & in $\R$.
\end{tabular}\right.
\end{equation*}
 Observe that we can obtain for $y_n$ an estimate similar to the one obtained for $v(T)$, namely
\begin{align*}
\|y_{n+1}\|_2&=\|w_{n}(T)\|_2 \leq \|S(T)y_n\|_2 +\left \| \int_0^T S(T-s)[[u(s+nT)^pw_n(s)]_xds \right\|_2 \\
&\leq 2^{1/2} C(v(\cdot+nT),0 )^{1/2}e^{- \frac{\nu}{2} T} +2^{\frac{p}{2}}CT^{\frac{2j-p}{4j}}T^{\frac12\left(1-\frac{1}{j}\right)}\|u(\cdot+nT)\|_{\mathcal{B}_{0,T}}^{p}\|w_n\|_{\mathcal{B}_{0,T}}  \\
&\leq 2^{1/2} C(v(\cdot+nT),0 )^{1/2}e^{- \frac{\nu}{2} T} + 2^{\frac{p}{2}}CT^{\frac{2j-p}{4j}}T^{\frac12\left(1-\frac{1}{j}\right)}\|u\|_{\mathcal{B}_{0,[nT,(n+1)T]}}^{p}\|v\|_{\mathcal{B}_{0,[nT,(n+1)T]}}.
\end{align*}
Thus, \eqref{e125} implies that
\begin{multline*}
\|y_{n+1}\|_2\leq 2^{1/2} C(v(\cdot+nT),0 )^{1/2}e^{- \frac{\nu}{2} T}\\   
+2^{\frac{p}{2}}CT^{\frac{2j-p}{4j}}T^{\frac12\left(1-\frac{1}{j}\right)}\|u\|_{\mathcal{B}_{0,[nT,(n+1)T]}}^{p}\sigma(\|u\|_{\B_{[nT,(n+1)T]}},T) \|y_n\|_2
\end{multline*}
with 
\begin{equation*}
C(v(\cdot+nT),0)=\frac 12\Vert y_n \Vert^2_2,
\end{equation*}
whence
\begin{align}\label{yn}
\|y_{n+1}\|_2 \leq  \Big( e^{- \frac{\nu}{2}  T} + 2^{\frac{p}{2}}CT^{\frac{2j-p}{4j}}T^{\frac12\left(1-\frac{1}{j}\right)}\|u\|_{\mathcal{B}_{0,[nT,(n+1)T]}}^{p}\sigma(\|u\|_{\B_{[nT,(n+1)T]}},T)\Big)\|y_n\|_2.
\end{align}
Moreover, we can choose $\beta >0$ small enough, such that
\begin{equation*}
e^{-\frac{\nu}{2}  T} +2^{\frac{p}{2}}CT^{\frac{2j-p}{4j}}T^{\frac12\left(1-\frac{1}{j}\right)}\beta^{p}\beta\sigma(\beta,T) < 1.
\end{equation*}
With this choice of $\beta$, Corollary \ref{cor2} allows us to pick $N>0$, large enough, satisfying
\begin{equation*}
\|u\|_{0,[nT,(n+1)T]}\leq 2 C_{2T}C(u_0,0) e^{-\nu nT} \leq 2 C_{2T}C(u_0,0) e^{-\nu NT} \leq  \beta, \qquad \forall n> N.
\end{equation*}
Thus, from \eqref{yn} we obtain the following estimate
\begin{equation*}
\|y_{n+1}\|_2\leq r \|y_n\|_2, \qquad \forall n\geq N, \quad \text{where $0<r<1$},
\end{equation*}
which implies
\begin{equation}\label{e126}
\|v((n+k)T)\|_2\leq r^k \|v(nT)\|_2, \qquad \forall n\geq N.
\end{equation}
Now, pick $T_0=NT$ and $t \geq T_0$. Then, there exists $k \in \N$ and $\theta \in [0,T]$, satisfying
\begin{equation*}
t=(N+k)T+\theta.
\end{equation*}
Therefore, from \eqref{e125} and \eqref{e126},  it is follows that
\begin{align*}
\|v(t)\|_2& \leq \|v\|_{0,[(N+k)T, (N+k+1)T]} \leq \gamma (u_0,t,T)\|v((N+k)T)\|_2 \\
&\leq \gamma (u_0,t,T)r^{\frac{t-NT-\theta}{T}}\|v(T_0)\|_2 \\
&\leq \gamma (u_0,t,T)r^{\frac{t-NT-\theta}{T}}\sigma ((\|u\|_{\mathcal{ B}_{T_0}},T_0))\|v(0)\|_2 \\
&\leq \eta_1(\|u_0\|)e^{-\delta_1t}\|v_0\|_2,
\end{align*}
where $\delta_1=\frac{1}{T}\ln\left(\frac{1}{r}\right)$ and $\eta_1(s)=\gamma (u_0,t,T)\sigma(\beta_0(s,T_0))r^{-(N+1)}$. 
Hence
\begin{equation*}
\|u_t(t)\|_2\leq C\eta_1(\|u_0\|_2)\|u_0\|_{H^{2j+1}(\R)}e^{-\delta_1t}, \qquad \forall t \geq T_0.
\end{equation*}
Finally,  from \eqref{eqnew111} and since $C(u_0,0)^{1/2}\leq C\|u_0\|_{H^{2j+1}}$,  we have
\begin{equation*}
\begin{split}
\frac{1}{4}\|\partial_x^{2j+1}u(t)\|_2 \leq& C\eta_1(\|u_0\|_2)\|u_0\|_{H^{2j+1}(\R)}e^{-\delta_1t} \\
&+\left( C +\|\lambda_0\|_{\infty}\right)  e^{-\frac{\tilde\nu}{2} t}\|u_0\|_{H^{2j+1}}+ e^{-\frac{\tilde\nu }{2}\left( \frac{4j(p+1)+p}{4j-p}\right) t} \|u_0\|_{H^{2j+1}},
\end{split}
\end{equation*}
showing the inequality \eqref{e129}.
\end{proof}

\subsection{Stabilization in  $H^{s}(\R)$}
Finally, we prove the third main result of this work.  

\begin{proof}[Proof of Theorem \ref{teoexpHs}] 
We already know that there exists a unique solution $u$ of the system \eqref{11} in the class $\mathcal{B}_{s,T}$ for every $T>0$. Moreover, if $0<\varepsilon<T$ then it follows from Corollary \ref{corollarywellposedness}, $u\in \mathcal{B}_{2j+1,[\varepsilon,T]}$. On the other hand, from the interpolation inequality \cite[inequality (2.43)]{lions}, we have
\begin{equation*}
\|u(t)\|_{H^s(\R)}=\|u(t)\|_{[L^2(\R),H^{2j+1}(\R)]_{2,\frac{s}{2j+1}}} \leq C\|u(t)\|_2^{1-\frac{s}{2j+1}}\|u(t)\|_{H^{2j+1}(\R)}^{\frac{s}{2j+1}}, \quad \forall t\geq \varepsilon,
\end{equation*}
where $C>0$ is a constant that comes from the interpolation argument. Thus,  Proposition \ref{prop7} and Theorems \ref{t36} and \ref{t42}, with $\lambda=\tau=0$, imply the existence of $T_0>0$, $\nu>0$ and $\gamma>0$ such that
\begin{equation*}
\|u(t)\|_{H^s(\R)}\leq Ce^{-\left(1-\frac{s}{2j+1}\right)\nu t}\|u_0\|_2^{\left(1-\frac{s}{2j+1}\right)}\alpha_{2j+1}^{\frac{s}{2j+1}}(\|u(\varepsilon)\|_2,T_0)\|u(\varepsilon)\|^{\frac{s}{2j+1}}_{H^{2j+1}(\R)}e^{-\frac{s}{2j+1}\gamma t}, \qquad \forall t \geq T_0.
\end{equation*}
Hence, \eqref{eqnew118} holds with $$\eta(s)=\left(1-\frac{s}{2j+1}\right)\nu+ \frac{s}{2j+1}\gamma >0$$ and $$\gamma(s,T_0)=Cs^{-\frac{s}{2j+1}}\alpha_3^{\frac{s}{2j+1}}(\|u(\varepsilon)\|_2,T_0)\|u(\varepsilon)\|^{\frac{s}{2j+1}}_{H^{2j+1}},$$
and the proof is complete.
\end{proof}

\section{Concluding remarks}\label{sectionconclusion}
In this work, we gave a more general framework to treat stabilization problems for a general higher-order dispersive system, which extends several previous results that appear in the literature.  So, in terms of generality,  we can consider the nonlinear general differential operator 
\begin{equation}\label{vr}
\mathcal{V}u:=(-1)^{j+1}\partial_x^{2j+1}u(x,t)+(-1)^m\partial_x^{2m}u(x,t)+ \frac{1}{p+1}\partial_xu^{p+1}(x,t),
\end{equation} 
with $1\leq p < 2j$,  instead of the typical KdV equation (or co-related systems) as is usual in the literature\footnote{One may generalize the linear operator associated to $\mathcal{V}$ as
\[\mathcal{V}u= \sum_{m=0}^j \alpha_m \px^{2m+1}u+ \sum_{m=0}^j \beta_m \px^{2m}u,\]
where $\alpha_m,\beta_m \in \mathbb{R}$. However, the main analyses in the paper are almost analogous without additional difficulties, thus the operator \eqref{vr} does not lose the generality in the sense of the aim of this paper.}. Thus,  summarizing, we studied the asymptotic behavior of the equation \eqref{11} posed on an unbounded domain $\mathbb{R}$ with the constant $\tau >0$ as a time delay, and with the coefficients $$\lambda_0(x), \lambda(x) \in L^\infty(\RR) \quad \text{and} \quad j,m\in\mathbb{N}.$$

Considering $p=1$,  when we have $j=m=1$, in \eqref{vr}, we recover the result proved in \cite{KomPig2020} for the KdV-Burgers operator.  Additionally,  when $j=2$ and $m=1$, we have that the results of this manuscript are still valid for the fifth-order KdV-Burgers type operator.  Finally, we can take,  without loss of generality,  $j=m \in \mathbb{N}$ in \eqref{vr}. So,  we can define Lyapunov functionals associated with the solution of \eqref{11}
\begin{equation*}
E(t):= E(u(t))=\frac 12\int_\RR u^2 (x,t) dx
\end{equation*}
and, for $\lambda\in L^\infty (\RR ),$
\begin{equation*}
{\mathcal E}(t):= {\mathcal E}(u(t))=\frac 12\int_\RR u^2 (x,t) dx+\frac 12\int_{t-\tau }^t\int_\RR e^{-(t-s)}\vert \lambda (x)\vert u^2 (x,s)\ dx\  ds.
\end{equation*}
 Thanks to the damping mechanism and the delay term, we showed that solutions of \eqref{11} satisfy
\begin{equation*}
{\mathcal E}(t) \le C(u_0) e^{-\nu t}.
\end{equation*}

Additionally to that, it is important to note that Section \ref{sectionexponentialHsj+1} extends to 
the operator \eqref{vr} and the space $H^s$, for $s\in[0,2j+1]$, the results proposed in \cite{CCKR,gallego}. 
It also extends for an unbounded domain the results shown in \cite{CaVi,Valein}  with an appropriate choice of $j$ in the operator \eqref{vr}.  However,  considering the full system \eqref{11}, that is, the system with damping and delayed terms, is still an open problem to prove the stabilization and well-posedness results in $H^{2j+1}(\mathbb{R})$. Finally,  let us give some further comments. 

\subsection{Weak versus strong damping mechanism}
Observe that taking $\beta(x)=0$ in  Theorems \ref{t36} and \ref{t42}, the results are still valid. However,  we keep this term for a more general framework.  Keeping in mind that $\beta(x)\neq0$, additionally, we can consider $\lambda_0$ and $\lambda$ are constants such that $\vert\lambda\vert<\lambda_0,$ and the delay $\tau$ is sufficiently small,  so Theorem \ref{t36} gives us the exponential stability for the solution of \eqref{11}. This is possible using the method introduced in \cite{NPSicon06} for wave equations. In fact, in this case, we choose a sufficiently small delay $\tau$ and a constant $\gamma$ such that
\begin{equation*}
\frac {e^\tau+1}2 \vert \lambda\vert <\gamma<\lambda_0.
\end{equation*}
Since now $\lambda_0=\gamma_0$ (see \eqref{15}), the conditions \eqref{27} and \eqref{28} are satisfied, and the Theorem \ref{t36}  follows.  
The same remark applies if $\lambda, \lambda_0\in L^\infty (\RR)$ with $\lambda_0$ satisfying \eqref{15}, and $\vert\lambda\vert <\gamma_0.$ 

With respect to the Theorem \ref{t42}, for a more general framework, if $\lambda_0$ satisfies \eqref{41} and \eqref{42} instead of $\lambda_0 (x) \ge \gamma_0$ for a.e. $x\in \RR$, then for the same function, $\beta$ satisfying
$$\frac {e^\tau +1} 2\vert \lambda (x)\vert \le \gamma+ \beta(x)\qtq{for a.e.}x\in\RR,$$
we expect a smaller decay rate $\tilde \nu$  than $\nu$ in Theorem \ref{t42}. The explanation for this is the fact that there is a ``good'' part $\gamma$ of $\lambda_0$ that will compensate for the delayed feedback and its indefinite component $\beta_0$.

\subsection{General framework} One may generalize the system \eqref{11} as follows:
\begin{equation}\label{general_1}
\begin{cases}
\pt u(x,t) + \sum\limits_{m=0}^j \alpha_m \px^{2m+1}u(x,t)+ \sum\limits_{k=1}^n\beta_k \px^{2k}u (x,t)+\lambda_0(x) u(x,t)\\\hspace{4cm}+\lambda(x) u(x,t-\tau )+\frac{1}{2}\partial_xu^2(x,t)=0,
&\qtq{in}\RR\times (0,\infty),\\
u(x, s)=u_0(x, s),&\qtq{in}\RR\times [-\tau, 0],
\end{cases}
\end{equation}
where $\alpha_m,\beta_k \in \mathbb{R}$ and $j,n\in\mathbb{N}$. The previous system, which depends on the parameters $\alpha_m$ and $\beta_k$, recovers various delayed dispersive equations. Note that depending on the choice of the constants, we have:
\begin{itemize}
\item[1.] Burgers equation ($\alpha_m=0$ and $n=1$);
\item[2.] KdV equation ($j=1$ and $\beta_k=0$);
\item[3.] Kawahara equation ($j=2$,  $\alpha_0=1$, $\alpha_1=-1$ and $\beta_k=0$);
\item[4.] KdV--Burgers equation ($j=1$ and $n=1$);
\item[5.] Kawahara--Burgers equation ($j=2$,  $\alpha_0=1$, $\alpha_1=-1$ and $n=1$);
\item[6.] Fourth-order dispersive equation ($\alpha_m=0$, $n=2$, $\beta_1=-1$ and $\beta_2=1$).
\end{itemize}
However,  we point out that for the system \eqref{general_1}, the main analyses in the paper are almost analogous without additional difficulties, thus the equation \eqref{11} does not lose the generality in the sense of the aim of this paper.

\subsection*{Acknowledgments} We greatly appreciate the referee’s careful reading and helpful suggestions. This work was done during several visits of the authors to the Université de Strasbourg and Universidade Federal de Pernambuco.  They thank the members of the departments for their hospitality. Moreover, the first author would like to thank the Department of Mathematics of Virginia Tech, where the article was finished.

\subsection*{Funding} Capistrano–Filho was supported by CNPq grants 307808/2021-1, 401003/2022-1, and 200386/2022-0, CAPES grants 88881.311964/2018-01 and 88881.520205/2020-01, and MATHAMSUD 21-MATH-03.  Gallego was supported by MATHAMSUD 21-MATH-03 and the UnalManizales project Nro 57774.  Komornik was supported by the grant NSFC No. 11871348 and MATHAMSUD 21-MATH-03. 

\subsection*{Data availability statement} The present manuscript has no associated data.
\subsection*{Conflict of interest} The authors declare no conflict of interest.

\end{document}